\let\ams@starttoc\@starttoc
\let\@starttoc\ams@starttoc
\patchcmd{\@starttoc}{\makeatletter}{\makeatletter\parskip\z@}{}{}
\newcommand{\C}{\mathbb{C}}
\newcommand{\Z}{\mathbb{Z}}
\newcommand{\R}{\mathbb{R}}
\newcommand{\CP}{\mathbb{C}P}
\newcommand{\OP}{\operatorname}
\renewcommand{\Re}[1]{\mathfrak{Re}\,#1}
\renewcommand{\Re}{\mathfrak{Re}}
\newsavebox{\textvisiblespacebox}
\savebox{\textvisiblespacebox}{\texttt{aa}}
\newcommand\vartextvisiblespace[1][\wd\textvisiblespacebox]{%
 \makebox[#1]{\kern.1em\rule{.4pt}{.3ex}%
 \hrulefill%
 \rule{.4pt}{.3ex}\kern.1em}%
}
\numberwithin{equation}{section}
\newtheorem{thm}{Theorem}[section]
\newtheorem{lma}[thm]{Lemma}
\newtheorem{prp}[thm]{Proposition}
\newtheorem{cor}[thm]{Corollary}
\newtheoremstyle{TheoremNum}
 {\topsep}{\topsep} 
 {\itshape} 
 {} 
 {\bfseries} 
 {.} 
 { } 
 {\thmname{#1}\thmnote{ \bfseries #3}}
\theoremstyle{TheoremNum}
\theoremstyle{definition}
\newtheorem{quest}[thm]{Question}
\theoremstyle{remark}
\newtheorem{rmk}[thm]{Remark}
\theoremstyle:=definition,remark,plain,TheoremNum\do{%
\expandafter\g@addto@macro\csname th@\theoremstyle\endcsname{%
\addtolength\thm@preskip\parskip 
}%
} 
\title{Lagrangian Approximation of Totally Real Concordances}
\subjclass[2020]{53D12, 57K45}
\author{Georgios Dimitroglou Rizell}
\address{Department of Mathematics\\
Uppsala University\\
Box 480\\
SE-751 06 UPPSALA\\
SWEDEN}
\email{georgios.dimitroglou@math.uu.se}
\thanks{The author is supported by the Knut and Alice Wallenberg Foundation under the grants KAW 2021.0191 and KAW 2021.0300, and by the Swedish Research Council under the grant number 2022-06593 for the Centre of Excellence for Geometry and Physics at Uppsala University.}
\begin{document}

\begin{abstract}
We show that a two-dimensional totally real concordance can be approximated by a Lagrangian concordance whose Legendrian boundary has been stabilised both positively and negatively sufficiently many times. The main applications that we provide are constructions of knotted Lagrangian concordances in arbitrary four-dimensional symplectisations, as well as of knotted Lagrangian tori in symplectisations of overtwisted contact three-manifolds.
\end{abstract}

\maketitle
\setcounter{tocdepth}{1}
\tableofcontents

\section{Introduction and results}

Under most circumstances, Lagrangian submanifolds of symplectic manifolds are rigid object, and a large part of modern symplectic topology is devoted to understanding their subtle obstructions; the most powerful ones known are based upon Gromov's theory of pseudoholomorphic curves and, in addition, the more recently developed microlocal sheaf theory. Notwithstanding, there are several settings in which $h$-principles have been established for Lagrangians, e.g.~open Lagrangian submanifolds \cite{Eliashberg:IntroductionH}, or Lagrangians in high-dimensional symplectic manifolds with overtwisted concave boundary \cite{LagCaps}. We begin by recalling some results that are relevant for our investigations.

First, the $h$-principle for closed Lagrangian embeddings fails in the standard symplectic vector space $(\C^2,\omega_0)$. A manifestation of this is the fact that, although there are knotted totally real tori in $\C^2$ (i.e.~formally Lagrangian tori), all Lagrangian tori are smoothly isotopic through Lagrangians; see work of the author joint with Goodman and Ivrii \cite{Dimitroglou:Isotopy}. In high dimension, the situation is more complicated for tori; see results by the author and Evans \cite{Dimitroglou:Unlinking}. However, for Lagrangians $S^1 \times S^{n-1}$ there are again restrictions; see recent work by Nemirovski \cite{Nemirovski}.

Second, even though an $h$-principle holds for open Lagrangians, this is no longer the case for Lagrangian embeddings with fixed Legendrian boundary in a hypersurface of contact type. For instance, Eliashberg and Polterovich showed in \cite{Eliashberg:LocalLagrangian} that a Lagrangian disc in $D^4$ that coincides with $\mathfrak{Re}\C^2$ near the boundary $S^3=\partial D^4$ must be unknotted. Similarly, an exact Lagrangian cylinder in
$$([\log{\epsilon},0] \times S^3,d(e^t\alpha_{std})) \cong (D^4 \setminus B^4_\epsilon,\omega_0)$$
that coincides with $\mathfrak{Re}\C^2$ near the boundary must be smoothly unknotted by a result by Chantraine, Ghiggini, Golovko and the author \cite[Theorem 4.4]{FloerConc}. Also see Theorem \ref{thm:unknotted} below for a generalisation.

On the other hand, exact Lagrangian embeddings inside a Liouville cobordism $(\overline{X},d\lambda)$ with non-empty concave boundary $\partial_- \overline{X}$ are rather flexible under certain assumptions on $\partial_-\overline{X}$. More precisely, when $\dim \overline{X}=4$ it was shown by Lin in \cite{Lin} that a sufficiently stabilised knot inside $\partial_-\overline{X}$ admits an exact Lagrangian cap, i.e.~an exact Lagrangian with boundary coinciding with the given stabilised Legendrian. When $\dim \overline{X} \ge 6$ Eliashberg and Murphy introduced an $h$-principle for Lagrangians whose boundary in $\partial_- \overline{X}$ is stabilised. Recall that, in contact manifolds of dimension at least $\dim Y^{2n+1} \ge 5$, a stabilised Legendrian is loose in the sense of Murphy \cite{LooseLeg}, which is a class of Legendrian embeddings that satisfies an $h$-principle. Note that there is a large class of Legendrians that are not loose, for which no analogous $h$-principle is valid.

In contact manifolds of dimension three there is no class of Legendrians analogous to the class of loose Legendrians in high dimensions. However, adding sufficiently many stabilisations to a Legendrian, tends to make it more and more flexible. The Legendrian stabilisation in this dimension is the local modification of adding a zig-zag in the front projection as depicted in Figure \ref{fig:winding}; the stabilisation has a sign that depends on the orientation. For instance, Fuchs and Tabachnikov have shown that two smoothly isotopic Legendrian knots become Legendrian isotopic after sufficiently many stabilisations of both signs \cite{FuchsTabachnikov}. Also, it is easy to see that any smooth knot can be $C^0$-approximated by a Legendrian knot in the same smooth isotopy class if one allows sufficiently many stabilisations of both signs; see e.g.~work by Etnyre~\cite{Etnyre:Legendrian}, work by Cahn and Chernov \cite{Cahn}, as well as Proposition \ref{prp:nowherereeb} in this article. The reason why high dimensions is so different is that the number of stabilisations is not an isotopy invariant; roughly speaking, after stabilising once, any additional stabilisation does not alter the Legendrian isotopy class.

In this work we are only concerned with the low-dimensional case. Indeed, our flexibility result also holds only after one has performed sufficiently many stabilisations of \emph{both signs}.

Our main object of focus is that of a two-dimensional \textbf{Lagrangian concordance} in the four-dimensional symplectisation $(\R_t \times Y^3,d(e^t\alpha))$ from a Legendrian $\Lambda_-$ to a Legendrian $\Lambda_+$. By this, we mean a proper embedding of a Lagrangian cylinder $\R \times S^1 \hookrightarrow \R \times Y$ which coincides with
$$(-\infty,-1]\times \Lambda_- \:\: \cup \:\: [1,+\infty) \times \Lambda_+ \:\: \subset \:\: (\R _t \times Y^3,d(e^t\alpha))$$
outside of a compact subset. We will also consider compact Lagrangian concordances, which are Lagrangian embeddings
$$([T_-,T_+] \times S^1,\{T_-\} \times S^1,\{T_+\} \times S^1) \hookrightarrow ([T_-,T_+] \times Y,\{T_-\} \times Y, \{T_+\} \times Y)$$
of a compact cylinder which is tangent to $\partial_t$ near its boundary. We say that such a concordance $\Sigma \subset [T_-,T_+]\times Y$ has concave boundary $\Lambda_-$ and convex boundary $\Lambda_+$ if $\Sigma \cap\{t=T_\pm\}=\{T_\pm\} \times \Lambda_\pm$.

A symplectisation $(\R \times Y,d(e^t\alpha))$ has a natural set of compatible almost complex structures that are \textbf{cylindrical}, i.e.
\begin{itemize}
\item $J\xi=\xi$ is a compatible almost complex structure on $\xi=\ker\alpha \cap \ker dt$;
\item $J$ is invariant under translation of the $\R$-factor;
\item $J\partial_t=R_\alpha$ is the Reeb vector field defined by $\alpha(R_\alpha)=1$, $d\alpha(R_\alpha,\cdot)=0$, and $dt(R_\alpha)=0$.
\end{itemize}

In the case $Y=S^3$ or $\R^3$, the symplectisation $\R \times Y$ carries a natural cylindrical integrable complex structure; in the case $Y=S^3$, this is the complex structure induced by the symplectomorphism
\begin{gather*}
(\R \times S^3,d(e^t\alpha_{st})) \cong (\C^2 \setminus \{0\},\omega_0),\\
(t,p) \mapsto e^{t/2}\cdot p,
\end{gather*}
while in the case $Y=\R^3$ it is induced by the symplectomorphism
\begin{gather*}
\left(\R \times \R^3,d\left(e^t\left(dz-y\,dx\right)\right)\right) \cong (\{\C^2; \:\Re(z_2) >0\},e^{x_1+y_2^2/2}\left(dy_1-y_2\,dx_2\right)),\\
(t,x,y,z) \mapsto (t-y^2/2+iz,x+iy)
\end{gather*}

Consider a two-dimensional concordance $\Sigma \subset ([T_-,T_+] \times Y^3,d(e^t\alpha),J)$ in a four-dimensional symplectisation as above, where $J$ is a compatible almost complex structure, but where instead of the Lagrangian condition we merely assume that $\Sigma$ is totally real in the sense that $T\Sigma \cap JT\Sigma=\{0\}$ is transverse. Furthermore, we assume that $J$ is cylindrical near the boundary $\{T_\pm\} \times Y$ and, again, that $\Sigma$ is tangent to $\partial_t$ there. In other words, being totally real near the boundary is equivalent to $K_\pm=\Sigma\cap (\{T_\pm\} \times Y)$ being nowhere tangent to the Reeb vector field of $\alpha$.

\begin{thm}
\label{thm:main}
After choosing $T_+' \gg T_+$ sufficiently large, and extending $\Sigma$ by adjoining the trivial cylinder $[T_+,T_+'] \times K_+$, there exists a $C^0$-small smooth isotopy of $([T_-,T'_+] \times Y^3, \{T_-,T'_+\} \times Y^3)$ that takes the concordance $\Sigma$ to a Lagrangian concordance $L$ with convex and concave boundary given by Legendrian $C^0$-approximations $\Lambda_\pm$ of $K_\pm$. Moreover:
\begin{itemize}
\item If we assume that $\R \times A \subset \Sigma$ holds for some closed arc $A \subset K_\pm$ that is Legendrian, we may assume that the deformation and isotopy of $\Sigma$ is fixed inside $\R \times B$ for any open $B \subset Y$ that satisfies $A \cap B \subset \OP{int} A$.
\item If we assume that the boundaries $K_\pm$ of $\Sigma$ are Legendrian, then both $\Lambda_\pm$ can be constructed from $K_\pm$ by adding $k$ positive and $k$ negative stabilisations, where $k \gg 0$. When, in addition, the previous bullet point is satisfied for some arc $A \subset K_\pm$, we can again assume that the deformation and isotopy of $\Sigma$ is fixed inside a subset $\R \times B$ as above.
\end{itemize}
\end{thm}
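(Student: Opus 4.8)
The plan is to reduce to a local statement in the spirit of Lin's construction of Lagrangian caps, combined with a careful front-projection/generating-family analysis. First I would set up the $1$-jet picture: after a $C^0$-small smooth perturbation I may assume that the concordance $\Sigma$ is in ``almost Lagrangian'' position, meaning that the $2$-form $e^t\alpha$ restricted to $\Sigma$ is small in $C^0$; this is possible because total reality is an open condition and the obstruction to being Lagrangian is a primitive of $\omega|_\Sigma$, which we can try to make small by scaling in the $\R_t$-direction — this is exactly why we are allowed to append a long trivial cylinder $[T_+,T_+']\times K_+$ and send $T_+'\to\infty$. Concretely, the symplectisation coordinate lets us dilate: pushing part of $\Sigma$ far out in $t$ multiplies the symplectic area form on it by $e^t$ but we can simultaneously reparametrise so that the relevant ``error'' $1$-form on the cylinder $\R\times S^1$ becomes $C^0$-small. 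The aim of this first step is to arrive at a totally real cylinder whose failure to be Lagrangian is concentrated and small, with Legendrian behaviour near the ends controlled by the hypothesis on $K_\pm$.

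Second, I would invoke the two existing flexibility inputs quoted in the introduction. For the boundary behaviour, Proposition \ref{prp:nowherereeb} (the statement that a curve nowhere tangent to the Reeb field can be $C^0$-approximated by a Legendrian in the same smooth isotopy class, at the cost of many stabilisations of both signs) provides the Legendrian approximations $\Lambda_\pm$ of $K_\pm$; when $K_\pm$ is already Legendrian this is replaced by the Fuchs–Tabachnikov-type statement that adding $k$ positive and $k$ negative stabilisations, $k\gg 0$, lands in a fixed and sufficiently flexible isotopy class. For the interior, I would use Lin's theorem \cite{Lin}: a sufficiently stabilised Legendrian bounds an exact Lagrangian cap in any Liouville filling, and more to the point the technique (a front-spinning / wrinkling argument adding Reeb chords in pairs) is what lets us correct a totally real cylinder to an exact Lagrangian one while only changing it $C^0$-small. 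The key is that the discrepancy between $\Sigma$ and a genuine Lagrangian is, after the first step, an exact $1$-form of small norm on the cylinder, and small exact corrections can be absorbed by a Hamiltonian-type $C^0$-small isotopy once enough ``room'' (i.e.~enough stabilisations, i.e.~enough Reeb chords to play with) has been created near the ends.

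Third, I would handle the two refinements. The locality statement — that the deformation is fixed on $\R\times B$ whenever $\R\times A\subset\Sigma$ for a Legendrian arc $A$ with $A\cap B\subset\operatorname{int}A$ — should follow because all the modifications above (the dilation, the stabilisation insertions, the small exact correction) can be performed with support away from any fixed closed subset of the interior of a region where $\Sigma$ already agrees with a trivial Lagrangian cylinder over a Legendrian; one cuts off the correcting Hamiltonian there. The statement about $\Lambda_\pm$ being obtained from Legendrian $K_\pm$ by exactly $k$ positive and $k$ negative stabilisations is the quantitative refinement of Proposition \ref{prp:nowherereeb} specialised to the already-Legendrian case, so it is really a matter of bookkeeping the stabilisations introduced.

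The main obstacle I expect is the second step: making precise that a \emph{small} totally real (non-Lagrangian) perturbation of a Lagrangian cylinder can be corrected to an honest Lagrangian by a $C^0$-small isotopy, with the correction localised near the stabilised ends. The naive Moser-type argument corrects closed $2$-forms by diffeomorphisms but does not a priori keep the isotopy $C^0$-small nor respect the boundary and locality constraints; so the real content is to show that the ``defect'' can be organised — via the long trivial cylinder and the stabilisations — into a form that is killed by an isotopy supported in prescribed coordinate charts near $\Lambda_\pm$, and that the integrable complex structures in the $Y=S^3,\ \R^3$ cases (or a cylindrical $J$ in general) are compatible with this localisation. Getting the quantifiers right — how large $T_+'$ and $k$ must be as a function of the $C^0$-size of the initial defect and of the desired approximation quality — is where the bulk of the technical work will lie.
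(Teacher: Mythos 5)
There is a genuine gap, and the proposal also misidentifies the role of the long trivial cylinder. Your Step 1 asks that a $C^0$-small perturbation (helped by "dilating" in $t$) make $\omega|_\Sigma$ $C^0$-small; this cannot work, because a $C^0$-small perturbation does not control derivatives, and pushing part of $\Sigma$ out in $t$ \emph{increases} the symplectic area density $e^t$ rather than shrinking the defect. The paper never makes $\omega|_\Sigma$ small: the appended cylinder $[T_+,T_+']\times K_+$ is used for something else entirely, namely to have room to (a) glue in the traces of isotopies that straighten $K_\pm$ to the stabilised Legendrians $\Lambda_\pm$, and (b) absorb the $p_t$-component of the section representing $\Sigma$ via a $C^2$-small cut-off one-form, as in Lemma~\ref{lma:pq=0}.

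The interior step is the real missing idea, and appealing to Lin or to a Moser-type correction does not supply it; indeed you yourself flag that Moser is not $C^0$-controlled. What the paper does instead is structural and slice-by-slice: after Proposition~\ref{prp:section} and Lemma~\ref{lma:pq=0} the concordance becomes a section in $T^*([T_-,T_+']\times S^1)$ with $p_t=0$; its $t$-slices are graphical curves $\gamma_s\subset T^*S^1$ that are generically \emph{not exact} (the period $c(s)=\int_{S^1}\gamma_s^*(p\,d\theta)$ is nonzero), so they have no Legendrian lift. One then inserts $2N$ small $W$-fold covered loops (corresponding to $NW$ positive and $NW$ negative stabilisations of the Legendrian lift at $t=T_-$) and lets their areas $a_i(s)\in(0,c_1]$ depend on $s$, choosing $a_i'(s)\ge 0$ so that (i) the modified curves become exact for all $s$, (ii) the resulting Legendrian lift has uniformly small $z$ and $\partial_s z$, and (iii) the Reeb chord lengths are non-decreasing in $s$. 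Lemma~\ref{lma:lagcob} then says that a Legendrian isotopy with non-decreasing chord lengths has an \emph{embedded} Lagrangian trace in the symplectisation, with $p_t$-size controlled by $\sup|z+\partial_s z|$; this gives the $C^0$-close embedded Lagrangian. Nothing in your proposal replaces this mechanism: the exactness defect is killed by distributing small loop-areas monotonically along $s$, not by shrinking the defect by a perturbation. Your boundary step (Proposition~\ref{prp:nowherereeb}, stabilisations of both signs) is in line with the paper, but the interior step as written would not go through.
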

Recall that totally real submanifolds (closed, open, or with boundary conditions) satisfy all forms of $h$-principles; see \cite[27.4.1]{Eliashberg:IntroductionH}. This flexibility implies the existence of plenty of totally real concordances in different smooth isotopy classes. The main point with our result is that this flexibility also gives rise to a wealth of Lagrangian concordances in different smooth isotopy classes, under the assumption that the Legendrian boundary is stabilised sufficiently both positively and negatively. 
\begin{rmk}
Instead of specifying a compatible almost complex structure and working with totally real concordances, one could alternatively consider the class of $\epsilon$-Lagrangian embeddings which also satisfy an analogous $h$-principle; see \cite[27.2]{Eliashberg:IntroductionH}.
\end{rmk}

Next we present our two main applications. The first one shows that the smooth knotted class of Lagrangian concordances becomes more and more flexible after we add sufficiently many stabilisations. This is in contrast to the unknottedness results for Lagrangian concordances between the standard (unstabilised) unknot in $S^3$ and itself that was proven in \cite{FloerConc}; also see Theorem \ref{thm:unknotted} below.

\begin{thm}
\label{thm:knotted}
Consider a Legendrian knot $\Lambda \subset (S^3,\xi)$ inside a contact sphere.
\begin{itemize}
\item \emph{When $\xi=\xi_{st}$ is the standard tight contact structure:} there exist numbers $k_\pm \ge 0$ depending on $N \ge 0$ with the property that, after constructing a Legendrian $\Lambda'$ obtained by adding $k_+$ positive and $k_-$ negative stabilisations to $\Lambda$, there exists $N$ pairwise different Lagrangian concordances from $\Lambda$ to itself up to homeomorphism of $\R \times S^3$.
\item \emph{When $\xi=\xi_{ot}$ is overtwisted:} There exists infinitely many Lagrangian concordances $C$ from $\Lambda$ to itself up to homeomorphism of $\R \times S^3$ whenever $\Lambda$ admits an overtwisted disc in its complement.
\end{itemize}
\end{thm}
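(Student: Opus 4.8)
The plan is to deduce both cases from Theorem~\ref{thm:main}: first I would produce, for the prescribed number of ``layers'', pairwise inequivalent \emph{totally real} concordances from $\Lambda$ to itself that differ only by some interior knotting, and then Lagrangian‑approximate each of them by Theorem~\ref{thm:main}, arguing that the complement topology — and hence the inequivalence — is preserved. The key soft point is that the approximating isotopy is a (smooth) isotopy of $\R\times S^3$, hence a homeomorphism of the pair, so any homeomorphism invariant of $(\R\times S^3,\Sigma)$ is inherited by the resulting Lagrangian concordance; we shall use the Alexander polynomial of the complement as such an invariant.

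\textbf{Knotted totally real concordances.} Begin with the trivial cylinder $\R\times\Lambda$, which is Lagrangian, hence totally real. Fix a nontrivial $2$-knot $S_0\subset S^4$ — say the spin of the trefoil, so that the Alexander polynomial of its complement is $t^2-t+1$ — and for $m\ge 1$ let $\Sigma_m$ be obtained from $\R\times\Lambda$ by performing $m$ internal connected sums with $S_0$ inside $m$ disjoint balls in the interior of the concordance, chosen to meet the cylinder only over a short arc of $\Lambda$ and away from the ends. Then $\Sigma_m$ is again a smooth cylinder, still coinciding with $\R\times\Lambda$ near the ends (so its Legendrian boundary is unchanged and it is totally real there), and it is formally totally real: over each surgery ball the inserted knotted disc has trivial tangent bundle, and the boundary framing induced by the constant totally real framing of $\R\times\Lambda$ extends over that disc, since the relevant obstruction lies in $\pi_1(\GL(2,\C))$ and already vanishes for the unknotted filling disc present in $\R\times\Lambda$. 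By the $h$‑principle for totally real embeddings with boundary conditions \cite[27.4.1]{Eliashberg:IntroductionH}, $\Sigma_m$ is isotopic, by a $C^0$‑small isotopy, to a genuine totally real concordance, still denoted $\Sigma_m$. By multiplicativity of the Alexander polynomial under internal connected sum, the complement $(\R\times S^3)\setminus\Sigma_m$ has infinite cyclic first homology and Alexander polynomial $\Delta_\Lambda(t)\,(t^2-t+1)^m$; since these are pairwise distinct, the $\Sigma_m$ are pairwise inequivalent up to homeomorphism of $\R\times S^3$.

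\textbf{Tight case.} Apply Theorem~\ref{thm:main} to $\Sigma_1,\dots,\Sigma_N$, reading ``$k\gg 0$'' in its second bullet as ``for every sufficiently large $k$''. Choosing one such $k=k_+=k_-$ (depending on $N$) valid for all $m\le N$, and using the first bullet of Theorem~\ref{thm:main} with $A$ a long Legendrian sub‑arc of $\Lambda$ disjoint from the interior knotting, the deformation can be confined to the short complementary arc, so each $\Sigma_m$ is carried by a $C^0$‑small smooth isotopy of $\R\times S^3$ to a Lagrangian concordance $L_m$ both of whose ends are the Legendrian $\Lambda':=S_+^k S_-^k\Lambda$ (well defined up to Legendrian isotopy, since the zig‑zags may be reordered and slid along the knot). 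As this isotopy is a homeomorphism of $(\R\times S^3,\Sigma_m)$ onto $(\R\times S^3,L_m)$, the complements of $L_1,\dots,L_N$ carry the pairwise distinct Alexander polynomials $\Delta_\Lambda(t)\,(t^2-t+1)^m$, so $L_1,\dots,L_N$ are $N$ pairwise inequivalent Lagrangian concordances from $\Lambda'$ to itself.

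\textbf{Overtwisted case, and the main obstacle.} One carries out the same construction with infinitely many layers $m=1,2,\dots$, the complements acquiring the pairwise distinct polynomials $\Delta_\Lambda(t)\,(t^2-t+1)^m$. The one genuinely new issue is that now the Legendrian boundary of the approximating Lagrangian concordance must be $\Lambda$ itself, not a stabilisation of it — and this cannot be repaired after the fact, since a Lagrangian cylinder preserves $\tb$, so there is no Lagrangian concordance from $\Lambda$ to $S_+^kS_-^k\Lambda$ by which one could concatenate. The hypothesis that $\Lambda$ bounds an overtwisted disc in its complement is used precisely here: the overtwisted disc supplies, within the proof of Theorem~\ref{thm:main}, the flexibility needed to absorb the cancelling stabilisations that would otherwise be forced onto the boundary, so that the approximation can be arranged to end on the unstabilised $\Lambda$. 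I expect this step — coercing the Lagrangian approximation of Theorem~\ref{thm:main} to terminate on the given Legendrian $\Lambda$ by exploiting an overtwisted disc in its complement — to be the main obstacle; everything else is the soft argument above. Granting it, the $C^0$‑small isotopies again preserve the complement Alexander polynomials, yielding infinitely many pairwise inequivalent Lagrangian concordances from $\Lambda$ to itself.
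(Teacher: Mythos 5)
Your tight-case argument is correct but takes a genuinely different route from the paper. You build the knotted totally real concordances by internal connected sum with copies of a spun trefoil (followed by an appeal to the $h$-principle to make the result totally real), and you distinguish the resulting complements by their Alexander polynomials. The paper instead replaces an annular patch of $\R\times\Lambda$ by a knotted annulus $A\times S^1$ (Corollary~\ref{cor:lagrel}), where $A$ is tilted away from the $\partial_{p_\theta}$-direction so that the new surface is honestly totally real for the given ambient $J$ with no $h$-principle needed, and distinguishes the complements by their fundamental groups via Seifert--van Kampen (Corollary~\ref{cor:cyl}). The two routes buy essentially the same thing: your Alexander-polynomial invariant is slightly cleaner for separating complements when $\Lambda$ is itself knotted (the paper adds the hypothesis that $\Lambda$ be unknotted at that point), while the paper's annulus replacement sidesteps the formal-obstruction check that you only sketch before invoking \cite[27.4.1]{Eliashberg:IntroductionH}.

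The overtwisted case, however, is not proved in your proposal. You correctly locate the difficulty --- the boundary must end up on $\Lambda$ itself, not on a further stabilisation, and this cannot be repaired by concatenation because $\tb$ is a Lagrangian-concordance invariant --- but you then leave the key step as a ``granted'' obstacle. Your guess at the mechanism, namely feeding an overtwisted disc \emph{into the proof} of Theorem~\ref{thm:main} so that the boundary stabilisations are absorbed, is not what happens. The paper uses Theorem~\ref{thm:main} as a black box, combined with the following standard fact (proved in the paper via convex surface theory \cite{Giroux:Convexite}, or alternatively by connected sum with stabilised unknots): a Legendrian knot lying in the complement of an overtwisted disc admits $k$ positive and $k$ negative destabilisations for every $k\geq 0$. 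Hence one can choose a Darboux chart $J^1\R\hookrightarrow S^3$, disjoint from the knotting region, in which $\Lambda$ is already presented as the $k$-fold double stabilisation $S_+^kS_-^k(j^10)$ for arbitrarily large $k$, and Corollary~\ref{cor:lagrel} applied there yields a Lagrangian concordance whose boundary is precisely this doubly stabilised zero-section, i.e.\ $\Lambda$ itself. This destabilisation lemma is the missing idea; once it is in hand there is nothing to modify inside Theorem~\ref{thm:main}.
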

\begin{quest}
How many stabilisations $k_+,k_-\ge0$ are needed in the first bullet point for some given knot class? Is it universally bounded or not?
\end{quest}

We use the above flexibility to refine the existence of exact Lagrangian tori in symplectisations of overtwisted contact manifolds proven by the author in \cite{Dimitroglou:Exact} to show that the produced tori also can be taken to be smoothly knotted.

\begin{thm}
\label{thm:ot}
There exist infinitely many smooth isotopy classes of exact Lagrangian tori with vanishing Maslov class inside the symplectisation $(\R \times S^3,d(e^t\alpha_{ot}))$ when $\ker \alpha_{ot}$ is an overtwisted contact structure on $S^3$.
\end{thm}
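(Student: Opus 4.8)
The plan is to build each torus as the union of a Lagrangian concordance from a fixed (sufficiently stabilised) Legendrian knot $\Lambda \subset (S^3,\xi_{ot})$ to itself together with two \emph{fixed} Lagrangian capping surfaces, and to let the concordance range over the infinitely many smooth isotopy classes provided by Theorem~\ref{thm:knotted}. Exactness and the vanishing of the Maslov class will be inherited from the construction in \cite{Dimitroglou:Exact}, so the genuinely new point is that infinitely many of the resulting tori lie in pairwise distinct smooth isotopy classes.

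First I would fix a Legendrian knot $\Lambda \subset (S^3,\xi_{ot})$ admitting an overtwisted disc in its complement; after an isotopy this can be arranged for any knot type, and we are free to replace $\Lambda$ by any stabilisation of it, which again admits an overtwisted disc in its complement and to which Theorem~\ref{thm:knotted} still applies. In the overtwisted symplectisation $(\R\times S^3,d(e^t\alpha_{ot}))$ I would then produce two fixed exact Lagrangian surfaces asymptotic to $\Lambda$: a Lagrangian disc $P_-$ with a single positive cylindrical end over $\Lambda$, and a genus-one Lagrangian surface $P_+$ with one boundary component, having a single negative cylindrical end over $\Lambda$, each compact on the other side. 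The existence of such flexible caps in an overtwisted symplectisation is precisely the input used in \cite{Dimitroglou:Exact}, and it is consistent with (and could be re-derived from) the method of Theorem~\ref{thm:main} applied to totally real surfaces of the appropriate topological type together with the totally real $h$-principle. I would also record the symplectic areas and the rotation data of $P_\pm$, since these have to be cancelled at the end.

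Next, using the overtwisted case of Theorem~\ref{thm:knotted}, I would choose Lagrangian concordances $C_1,C_2,\dots$ from $\Lambda$ to itself, pairwise distinct up to homeomorphism of $\R\times S^3$. After translating $P_-$ far down and $P_+$ far up, I would glue to form the closed Lagrangian surfaces
\[
T_j \;=\; P_- \;\cup\; C_j \;\cup\; P_+ \;\subset\; (\R\times S^3,d(e^t\alpha_{ot})),
\]
which are tori since Euler characteristics are additive over the circle gluings: $\chi(T_j)=1+0+(-1)=0$. Exactness of $T_j$ and the vanishing of its Maslov class $\mu_{T_j}\in H^1(T^2;\Z)\cong\Z^2$ would then be arranged exactly as in \cite{Dimitroglou:Exact}: the two components of $\mu_{T_j}$ are killed by controlling the rotation numbers of the three pieces, and the one-dimensional symplectic-area obstruction is cancelled using the scaling of areas under $\R$-translations of the pieces, together with the slack in the flexible construction of $C_j$ (enlarging the number of stabilisations of $\Lambda$ uniformly if more room is required).

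It remains to show that infinitely many of the $T_j$ are pairwise non-isotopic through smooth embeddings, and I expect this to be the main obstacle. A smooth isotopy of the $T_j$ extends to an ambient isotopy of $\R\times S^3$ that need not respect the decomposition $T_j=P_-\cup C_j\cup P_+$, so one cannot directly invoke the distinctness of the $C_j$; instead I would use an invariant of the complement $\R\times S^3\setminus T_j$ --- concretely its fundamental group, or the Alexander module given by the first homology of its infinite cyclic cover. Since $T_i$ and $T_j$ differ only by replacing $C_i$ with $C_j$ inside the \emph{fixed} slab $\{a\le t\le b\}$, a van Kampen / Mayer--Vietoris computation should recover from such an invariant of $T_j$ the corresponding complement invariant of $C_j$, up to the fixed contribution of $P_\pm$; combined with the fact that the $C_j$ of Theorem~\ref{thm:knotted} are separated by precisely such complement invariants, this yields infinitely many distinct smooth isotopy classes among the $T_j$, and hence the theorem. (There is no tension with the Lagrangian-isotopy uniqueness of tori in the \emph{standard} $(\C^2,\omega_0)$ of \cite{Dimitroglou:Isotopy}, because the ambient symplectic manifold here is the overtwisted symplectisation.) The delicate points are that the invariant separating the $C_j$ must be of a sufficiently local, gluing-additive nature, and that an arbitrary homeomorphism of the pair $(\R\times S^3,T_j)$ should still preserve a meridian and the homotopy class of the $t$-direction, so that this invariant can be read off from $T_j$ alone; a conservative fallback that still suffices is to pass to a subfamily whose concordance complements are already distinguished by their Alexander polynomials, which visibly fit into a Mayer--Vietoris sequence governed by the fixed caps.
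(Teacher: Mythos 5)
Your overall strategy---build each torus as a concordance sandwiched between fixed Lagrangian pieces, then distinguish the results by a complement invariant---is the right spirit, but it diverges from the paper's actual route in a way that introduces genuine gaps.

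The paper does not assemble a new torus from a disc filling, a concordance, and a genus-one cap. Instead it takes the exact Lagrangian torus $\Sigma$ that \cite{Dimitroglou:Exact} already produces, and uses the specific structure of that torus: in a middle slab it consists of \emph{two} trivial Lagrangian cylinders over a two-component Legendrian $\Lambda \cup \Lambda^+$ (a knot and its Reeb push-off), and crucially $\Sigma$ can be taken to bound an embedded handle-body, i.e.\ it is unknotted. The modification is then a local replacement of an annular piece of the cylinder $[-1,1]\times\Lambda$ by a knotted annulus $(\R\times\Lambda)_{\iota,A}$, carried out in the complement of the other cylinder $[-1,1]\times\Lambda^+$, and made Lagrangian via Corollary \ref{cor:lagrel}. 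The unknottedness of $\Sigma$ is exactly what makes Corollary \ref{cor:tori} applicable and gives $\pi_1(X\setminus\Sigma_{\iota,A})\cong\pi_1(S^3\setminus K_A)$ on the nose.

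The first concrete gap in your proposal is the existence of the Lagrangian disc $P_-$ with a single \emph{positive} cylindrical end over $\Lambda$ that is compact towards $t\to-\infty$. That is an exact Lagrangian \emph{filling}, not a cap. Lin's theorem and the Eliashberg--Murphy $h$-principle, which are the flexibility results available in this setting, produce Lagrangian \emph{caps} (negative end, compact above); they say nothing about fillings, and Theorem~\ref{thm:main} is formulated for concordances, not for arbitrary topological types. You cannot ``re-derive'' $P_-$ from the totally real $h$-principle plus Theorem~\ref{thm:main}: the theorem only approximates a totally real \emph{concordance} (cylinder) and requires stabilising the Legendrian boundary, which is incompatible with capping off a compact disc. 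The construction in \cite{Dimitroglou:Exact} avoids this by using a two-component Legendrian and attaching annular caps/fills to both ends at once, which has a fundamentally different topology than your $P_-\cup C_j\cup P_+$ decomposition.

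The second gap is in the distinguishing step. You propose to extract the complement invariant of $C_j$ from that of $T_j$ ``up to the fixed contribution of $P_\pm$'' by a Mayer--Vietoris/van Kampen argument, and you flag yourself that this requires the invariant to be ``sufficiently local and gluing-additive'' and that a homeomorphism of the pair must preserve enough structure to identify a meridian. Without knowing that the base torus is unknotted, $\pi_1(P_\pm\text{-complement})$ is not under control, and the amalgamated product you would get from Seifert--van Kampen need not let you read off $\pi_1(S^3\setminus K_A)$ cleanly. The paper's route sidesteps all of this: because $\Sigma$ bounds a handle-body, $\pi_1$ of its complement is just $\Z$ generated by a meridian, and the surgery formula of Lemma~\ref{lma:svk} (in the form of Corollary~\ref{cor:tori}) gives exactly the knot group of $K_A$ as a complete invariant of the modified torus, with no residual contribution to control. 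To fix your argument, you would need to replace the hand-built caps by the existing torus and use its unknottedness, which is essentially the paper's proof.
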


The first construction of exact Lagrangian tori in these symplectisation was carried out in \cite{Dimitroglou:Exact} by the author, where it was shown that the Maslov class moreover can be taken to vanish. The outcome of the present work is that they also can be taken to be smoothly knotted. Recall that there are no exact Lagrangian tori in the symplectisation of the standard tight $S^3$, and that all Lagrangian tori in that symplectic manifold are smoothly isotopic by \cite{Dimitroglou:Isotopy}.

Finally we prove a rigidity result that exhibits the necessity to use stabilisations of \emph{both} signs in order to guarantee the existence of knotted concordances.
\begin{thm}
\label{thm:unknotted}
If $\Lambda_k$ is a $k$-fold stabilisation of the standard Legendrian unknot of ${\tt tb}=-1$, where all stabilisations have the same sign, then any Lagrangian concordance $L \subset \R \times S^3$ from $\Lambda_k$ to itself is compactly supported smoothly isotopic to $\R \times \Lambda_k$.
\end{thm}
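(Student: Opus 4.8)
The natural plan is to reduce the statement to the case $k=0$, which is \cite[Theorem 4.4]{FloerConc}: there one caps the concave end of the concordance with the standard Lagrangian disc $\mathfrak{Re}\,\C^2\cap B^4_\epsilon$ filling the maximal unknot $\Lambda_0$, obtaining a Lagrangian disc in $D^4$ that agrees with $\mathfrak{Re}\,\C^2$ near $S^3$, then invokes the unknottedness of such local Lagrangian discs of Eliashberg and Polterovich \cite{Eliashberg:LocalLagrangian}, and finally recovers the triviality of the concordance itself by isotopy extension. I would work throughout with the identification $(\R\times S^3,d(e^t\alpha_{st}))\cong(\C^2\setminus\{0\},\omega_0)$, under which the trivial cylinder $\R\times\Lambda_k$ is the punctured Lagrangian cone over $\Lambda_k$ and, for $k=0$, is exactly $\mathfrak{Re}\,\C^2\setminus\{0\}$; the desired compactly supported smooth isotopy then becomes a smooth isotopy of $D^4$, rel boundary, carrying the capped-off concordance onto this cone.

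The reason the $k=0$ argument does not apply verbatim is that a stabilised Legendrian is not augmentable and, having nonzero rotation number, bounds no exact Lagrangian filling whatsoever, so the concave end of $L$ cannot be filled by an exact Lagrangian disc. This is exactly where the hypothesis that all $k$ stabilisations share a sign is needed: among all stabilisations of $\Lambda_0$, these are precisely the ones that are extremal for the slice--Bennequin inequality, $\mathtt{tb}(\Lambda_k)+|r(\Lambda_k)|=-1$, and it is this extremality that should endow $\Lambda_k$ with a distinguished Lagrangian cap for its concave end---a (necessarily non-exact) embedded Lagrangian disc in $B^4_\epsilon$, or, if one prefers to keep control of the area, a minimally immersed exact Lagrangian disc---assembled coherently from the $k$ like-signed stabilisations. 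Capping $L$ with this disc produces a Lagrangian surface $\widehat L\subset D^4$ that is topologically a disc, has Legendrian boundary $\Lambda_k$, and agrees near $S^3$ with the cone over $\Lambda_k$.

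What remains is to show that $\widehat L$ is smoothly standard---isotopic rel boundary to the cap---and then to transport this back to $L$. I expect the first of these to be the main obstacle: one needs an analogue of the Eliashberg--Polterovich unknottedness statement for a Lagrangian disc in $D^4$ with the non-standard Legendrian boundary $\Lambda_k$, which I would attempt by re-running the proof scheme of \cite[Theorem 4.4]{FloerConc}---neck-stretching along $S^3_\epsilon$ and intersecting $\widehat L$ with a holomorphic foliation of $\C^2\setminus\{0\}$ adapted to the small ball carrying the stabilisations---using the disc topology of $\widehat L$ and the coherence of the like-signed stabilisations where exactness was previously used. The residual bookkeeping (smoothing the corner of $\widehat L$ along $S^3_\epsilon$, cancelling the cap against the standard cap of $\Lambda_k$ to return to $L$, and promoting the ambient isotopy to one that is compactly supported and fixed on the cylindrical ends) is of the same routine, if delicate, nature as in the $k=0$ case.
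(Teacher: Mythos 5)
Your plan has a genuine gap at the very first step: the distinguished Lagrangian cap for the concave end of $L$ does not exist. Any Lagrangian disc in $(B^4,\omega_0)$ is automatically exact, since a closed one-form on a disc is exact; hence a Lagrangian disc with Legendrian boundary $\Lambda$ forces $\tb(\Lambda)=-1$. For $k>0$ one has $\tb(\Lambda_k)=-1-k<-1$, so no embedded Lagrangian disc cap (exact or not) exists, and the slice--Bennequin extremality $\tb+|r|=-1$ does not rescue this because the filling-genus constraint uses $\tb$, not $\tb+|r|$. Replacing the cap by an immersed exact Lagrangian disc does not help either: the capped-off surface $\widehat{L}$ would then be immersed, and the Eliashberg--Polterovich scheme you want to re-run intersects $\widehat{L}$ against a pseudoholomorphic foliation and crucially relies on embeddedness to control those intersections; the immersed double points introduce uncontrolled extra intersections.

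The paper avoids the Lagrangian cap entirely. Following Eliashberg--Polterovich \cite{Eliashberg:ProblemLagrangian}, it first replaces the Lagrangian concordance by its symplectic push-off, a symplectic concordance cylindrical over the positive transverse push-off $T_+(\Lambda_k)$ at both ends. This is exactly where the like-sign hypothesis enters, but in a completely different guise than you propose: orienting $\Lambda_k$ so that all stabilisations are negative, the positive transverse push-off is unchanged in transverse isotopy class under negative Legendrian stabilisation, so $T_+(\Lambda_k)$ is transverse isotopic to $T_+(\Lambda_0)$ and hence to the standard periodic Reeb orbit $S^1\times\{0\}$. The stabilisations disappear on the transverse side, and one is reduced to showing that a symplectic concordance cylindrical over the Reeb orbit at both ends is compactly supported isotopic to the trivial cylinder. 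This is then done by identifying the symplectisation with $\CP^2\setminus(\{0\}\sqcup\CP^1_\infty)$, making the (now compactified) surface a $J$-holomorphic sphere, and deforming $J$ to the standard complex structure while tracking the unique degree-one curve through the prescribed tangency, via Gromov's positivity-of-intersection argument in $\CP^2$, then straightening the family to be cylindrical at the ends as in \cite{Dimitroglou:Isotopy}. So the transverse push-off is the mechanism by which symplectic discs/spheres (always available) substitute for the Lagrangian cap (obstructed for $k>0$); that substitution is the missing idea in your proposal.
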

\begin{quest}
Are any two such Lagrangian concordances for the same value $k$ necessarily Hamiltonian isotopic through Lagrangian concordances? This is the case when $k=0$ by \cite[Theorem 4.4]{FloerConc}.
\end{quest}
 The proof of the above result uses techniques similar to the ones used by Eliashberg and Polterovich in \cite{Eliashberg:ProblemLagrangian}, where smooth isotopy classes of Lagrangian cobordisms were classified by, first, deforming them to symplectic cobordisms, and then making them $J$-holomorphic and using the technique of persistence of pseudoholomorphic foliations that is available in dimension four.

\section{Smooth unknottedness (Proof of Theorem \ref{thm:unknotted})}

We choose the unique orientation of the unknot $\Lambda_k$ that makes all stabilisations negative. (This is possible by the assumption that all stabilisations are of the same sign.) Arguing as in \cite[Lemma 4.1]{Cao} we can push off the Lagrangian concordance $L$ to a symplectic concordance $\Sigma$ that coincides with the cylinder over the positive transverse push-off of $T_+(\Lambda_k)$ outside of a compact subset, where the positivity of the transverse push-off is with respect to the choice of orientation that was made before. Roughly speaking, this push-off is constructed in a standard Weinstein neighbourhood $D^*L$ of the cobordism, where it is given by a section that is of the one-form form $e^td\theta$. Here we must use a suitable identification of $L=\R_t \times S^1_\theta$ and a suitable Weinstein neighbourhood that is compatible with the cylindrical structure of $\R \times S^3$.

We will show that $\Sigma$ is compactly supported Hamiltonian isotopic to such a symplectic push-off of the trivial concordance $\R \times \Lambda_k$, which implies the claim.

It is a standard fact that isotopies of transverse knots in contact manifolds are generated by global contact isotopies $\phi_t \colon (Y,\alpha) \to (Y,\alpha)$, $\phi_t^*\alpha=e^{g_t}\alpha$. Hence, the isotopy of symplectic cylinders $\R \times T_t \subset \R \times Y$ over a smooth family $T_t=\phi_t(T_0)$ of transverse knots can be generated by a global cylindrical Hamiltonian isotopy
\begin{gather*}
(\R_t \times Y,d(e^t\alpha)) \to (\R_t \times Y,d(e^t\alpha)),\\
 (t,y) \mapsto (t-g_t(y),\phi_t(y)),
 \end{gather*}
which, moreover, preserves the primitive $e^t\alpha$ of the symplectic form. Recall that the transverse isotopy class of the positive transverse push-off of a Legendrian is preserved under negative stabilisations of the Legendrian; see e.g.~\cite[Section 2.9]{Etnyre:Legendrian} for definitions of the transverse push-off and its behaviour under stabilisation. It follows that the above symplectic push-off $\Sigma$ can be identified with a symplectic concordance that is cylindrical over the positive transverse push-off $T_+(\Lambda_0)$ of the standard Legendrian unknot at both ends, under a cylindrical Hamiltonian diffeomorphism. 

Furthermore, since the transverse unknot $T_+(\Lambda_0)$ is transverse isotopic to the standard periodic Reeb orbit $S^1 \times \{0\} \subset S^3$ we have concluded the following: there exists a cylindrical Hamiltonian diffeomorphism of $\R \times Y$ that takes $\Sigma \subset \R \times S^3$ to a symplectic concordance that coincides with $\R \times (S^1 \times \{0\}) \subset S^3$ outside of a compact subset.

Since cylindrical Hamiltonian diffeomorphisms preserve trivial cylinders $\R \times K \subset \R \times S^3$, it is now sufficient to show that any symplectic concordance $\Sigma$ that coincides with $\R \times (S^1 \times \{0\}) \subset S^3$ outside of a compact subset, is compactly supported isotopic to $ \R \times (S^1 \times \{0\})$.

There is a symplectomorphism 
\begin{gather*}
\left(\left(-\infty,e^a\right) \times S^3,e^t\alpha\right) \to \left(B^4 \setminus \{0\},e^a\omega_0\right)\\
(t,p) \mapsto e^{t/2-a}\cdot p
\end{gather*}
where the target space can be identified with $\left(\CP^2\setminus \left(\CP^1_\infty \cup \{0\}\right),e^a\omega_{FS}\right)$; here we have normalised the Fubini--Study form $\omega_{FS}$ so that the integral over a line is $\int_{\CP^1_\infty} \omega_{FS}=\pi$, where $\CP^1_\infty$ denotes the line at infinity. This symplectomorphism can be assumed to map $\Sigma$ to a symplectic surface that coincides with the complex line
$$\overline{\C \times \{0\}} \setminus \left(\{0\} \sqcup \CP^1_\infty\right) \subset \CP^2 \setminus \left(\{0\} \sqcup \CP^1_\infty\right) $$
outside of a compact subset.

We proceed by making $\Sigma$ into a $J$-holomorphic concordance for a compatible almost complex structure $J$ on $(\CP^2,e^a\omega_{FS})$, where $J$ is taken to coincide with the standard complex structure $J_0$ near $\{0\} \cup \CP^1_\infty \subset \CP^2$. In particular, the compactification $\overline{\Sigma} \subset \CP^2$ is a symplectic sphere that coincides with $\overline{\C \times \{0\}}$ near $\{0\} \cup \CP^1_\infty$.

Then we proceed with Gromov's argument from \cite[2.4]{Gromov:Pseudo}. Choose a one-parameter family $J_t$ of compatible almost complex structures where $J_1=J$ and $J_0=i$ is the standard complex structure, and where all $J_t$ coincide with $i$ in a neighbourhood of $\{0\} \sqcup \CP^1_\infty$. The $J_t$-holomorphic sphere $\ell_t$ of degree one tangent to $\C \times \{0\}$ at $\{0\}$ is uniquely determined by this requirement, and vary smoothly with $t$. (Since $J_t=J_0$ near $\{0\}$ the tangency is indeed $J_t$-complex for all $t$.) In particular, $\ell_t$ is a smooth isotopy of symplectic spheres that satisfy $\ell_1=\overline{\Sigma}$ and $\ell_0=\overline{\C \times \{0\}}$.

Note that the smooth family $\ell_t$ of $J_t$-holomorphic lines are not necessarily cylindrical outside of a compact subset of $(-\infty,e^a) \times S^3$, except in the cases $t \in \{0,1\}$. What remains is to smoothly deform the family of lines $\ell_t$ for all $t \neq \{0,1\}$, while fixing $\ell_0$ and $\ell_1$, in order to make $\ell_t \setminus (\{0\} \cup \CP^1_\infty)$ stay cylindrical for all $t$. We finish the proof by giving this argument.

As in the proof of \cite[Corollary 3.7]{Dimitroglou:Isotopy} we can find a Hamiltonian isotopy that, first, makes $\ell_t$ all intersect $\CP^1_\infty$ in the fixed point $\overline{\C \times \{0\}} \cap \CP^1_\infty$ (this can be done by a Hamiltonian isotopy of $\CP^1_\infty$) and, second, makes it tangent to $\overline{\C \times \{0\}}$ there (this straightens the curve at the intersection). This Hamiltonian isotopy can be taken to be supported in a small neighbourhood of $\CP^1_\infty$. Since the family of spheres now is tangent to $\overline{\C \times \{0\}}$ at both points
$$\{0\} \:\:\text{and} \:\: \overline{\C \times \{0\}} \cap \CP^1_\infty \subset \CP^2,$$
one can now readily find a Hamiltonian isotopy that ``straightens'' the spheres to make them coincide with $\overline{\C \times \{0\}}$ in some small neighbourhood of these tangencies.

This deformed family of lines $\tilde{\ell}_t$ finally gives the sought isotopy from $\Sigma$ to $\R \times (S^1 \times \{0\})$ with compact support.
\qed

\section{Proof of the applications (Theorems \ref{thm:knotted} and \ref{thm:ot})}

\subsection{Fundamental group computations}

\label{sec:fundgroup}

In order to distinguish the isotopy classes of the constructed surfaces, we need to perform computations of the fundamental group of the surface complements. Here we provide the necessary tools.

The following lemma computes the fundamental group of the complement of a surface that is ``unknotted,'' i.e.~a surface that bounds an embedded three-dimensional handle-body. This computation is standard; see e.g.~\cite[Section 1.2]{Kamada} for the case of surfaces of genus zero and one.
\begin{lma}
A closed oriented surface $\Sigma \subset \R^4$ that bounds a handle-body satisfies
$$ \pi_1 (\R^4 \setminus \Sigma)=\langle\mu\rangle =\Z\mu,$$
where $\mu$ is a meridian in the fibre of a small spherical normal bundle.
\end{lma}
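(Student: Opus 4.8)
The plan is to build an explicit handle decomposition of the complement $\R^4 \setminus \Sigma$ dual to the given handle-body, and read off a presentation via a Wirtinger-type argument. First I would set up the data: let $W \subset \R^4$ be the compact oriented $3$-dimensional handle-body with $\partial W = \Sigma$, and fix a handle decomposition of $W$ with a single $0$-handle and some $1$-handles (a handle-body of any genus admits such a decomposition). Thicken $W$ to a closed tubular neighbourhood $\nu(W) \cong W \times [-1,1]$ in $\R^4$; then $\R^4 \setminus \Sigma$ deformation retracts onto $\R^4 \setminus \OP{int}\,\nu(W)$, and by general position the latter is homotopy equivalent to the complement of the spine of $W$, i.e.\ of a wedge of circles, only after we also account for the $2$-sphere at infinity. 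The cleaner route is: $\R^4 \setminus \OP{int}\,\nu(W)$ is obtained from $S^4 \setminus \OP{int}\,\nu(W)$ by removing a point (equivalently a small open $4$-ball), and $S^4 \setminus \OP{int}\,\nu(W)$ is itself a regular neighbourhood of the $2$-complex dual to $W$.

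The key computation is then a Mayer–Vietoris / van Kampen argument. Decompose $S^4 = \nu(W) \cup_{\Sigma \times [-1,1]} Z$, where $Z = S^4 \setminus \OP{int}\,\nu(W)$. Since $W$ is a handle-body, $\nu(W) \simeq W$ is homotopy equivalent to a wedge of $g$ circles where $g$ is the genus, and the inclusion $\Sigma \times [-1,1] \simeq \Sigma \hookrightarrow \nu(W)$ is the standard one. Now I would instead reverse the handle decomposition of $W$: dualizing, $W$ is built from $\Sigma \times [0,1]$ by attaching $2$-handles and a $3$-handle from the $\Sigma \times \{1\}$ side. Pushing this picture into $\R^4$: the complement $\R^4 \setminus \Sigma$ is built from a collar $\Sigma \times (0,1)$-neighbourhood by attaching $2$-, $3$-, and $4$-handles, but crucially all the $2$-handles attached along curves that, in $\Sigma$, form the attaching circles of the $1$-handles of $W$. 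Each such $2$-handle, attached along a curve $c \subset \Sigma$ pushed to one side, imposes the relation that kills a conjugate of the meridian $\mu$ (the boundary of the normal disc) — no, more precisely, each $2$-handle in the complement corresponds to a $1$-handle of $W$ and contributes a relation identifying two meridional generators.

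The honest way to organize this is a Wirtinger presentation adapted to the handle-body. Take the $0$-handle $h^0$ of $W$; its boundary $\partial h^0 \cong S^2$ sits in $\Sigma$, and the complement of the pushed-off $2$-sphere $\partial h^0 \times \{1\}$ in $\R^4$ has $\pi_1 = \Z\mu$ generated by a single meridian, since an unknotted $S^2 \subset \R^4$ has complement homotopy equivalent to $S^1$. Each $1$-handle $h^1_j$ of $W$ is attached along a pair of discs in $\partial h^0$; in the complement, traversing the two feet of $h^1_j$ shows that the two meridians on either side are conjugate, and attaching the handle (hence enlarging $W$, shrinking the complement) forces them to be \emph{equal}. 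So after processing all $1$-handles, $\pi_1(\R^4 \setminus \Sigma)$ is still generated by the single meridian $\mu$, with the relations merely identifying the various parallel meridians; one then checks there are no further relations by computing $H_1(\R^4 \setminus \Sigma) = \Z$ via Alexander duality, $H_1(\R^4\setminus\Sigma) \cong H^2_c(\R^4\setminus\Sigma) \cong \widetilde{H}_1(\Sigma \text{ in } S^4) $ — more simply, Alexander duality in $S^4$ gives $\widetilde H_0(S^4 \setminus \Sigma) \cong \widetilde H^3(\Sigma)=0$ and $H_1(S^4 \setminus \Sigma) \cong H^2(\Sigma) \cong \Z$ — so the abelianization is exactly $\Z\mu$, and since the group is generated by a single element, it must itself be $\Z\mu$.

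The main obstacle I expect is making the handle-trading/van Kampen bookkeeping honest rather than hand-wavy: specifically, verifying that attaching a $1$-handle to $W$ along two discs in its boundary has the effect, on the complement, of exactly identifying two meridional generators and introducing no spurious relation — this is where the ``bounds a handle-body'' hypothesis does all the work (it is exactly what makes the $1$-handle attaching regions unknotted and unlinked), and it is the step one must not skip. Once that local model is pinned down, the global statement follows by induction on the number of $1$-handles, with the $H_1$ computation via Alexander duality serving as the sanity check that the single generator survives with no torsion and no further relations.
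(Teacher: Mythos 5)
Your very first reduction is false: $\R^4 \setminus \Sigma$ does \emph{not} deformation retract onto $\R^4 \setminus \operatorname{int}\nu(W)$. Removing the surface $\Sigma$ from $\R^4$ leaves a space which, near $\Sigma$, is a circle bundle over $\Sigma$; removing the whole thickened handle-body $\nu(W)$ deletes far more, with a very different boundary (the double of $W$). In the simplest instance $W = B^3$, $\Sigma = S^2$: the left-hand side is $\R^4 \setminus S^2 \simeq S^1$ with $\pi_1 \cong \Z$, while the right-hand side is $\R^4 \setminus B^4 \simeq S^3$, which is simply connected. So the claimed retraction fails and the first paragraph of the argument never gets off the ground. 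The same confusion resurfaces in the Wirtinger step, where you argue that "attaching the handle (hence enlarging $W$, shrinking the complement) forces [the meridians] to be equal": adding a $1$-handle to $W$ does not simply shrink $\R^4 \setminus \Sigma$, because it also changes $\Sigma$ by a tube surgery, so there is no inclusion of complements to induce the asserted identification. The hypothesis "bounds a handle-body" must be used to produce a \emph{path through $W$} along which meridians can be dragged, not to delete material from the complement of $\Sigma$; conflating $\R^4 \setminus \Sigma$ with $\R^4 \setminus W$ is the central gap.

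The paper avoids this by a single clean Seifert--van Kampen decomposition: write $\R^4 \setminus \Sigma = (\R^4 \setminus H) \cup (U_H \setminus \Sigma)$, glued along $U_H \setminus H$, where $U_H$ is a regular neighbourhood of the handle-body $H$. Since $H$ is homotopy equivalent to a $1$-complex, a codimension/genericity argument gives $\pi_1(\R^4 \setminus H) \cong \{1\}$ and $\pi_1(U_H \setminus H) \cong \pi_1(H)$; and because the normal bundle of $\Sigma$ is trivial (it bounds), a second van Kampen gives $\pi_1(U_H \setminus \Sigma) \cong \pi_1(H) \times \Z\mu$ by viewing $U_H \setminus \Sigma$ as the circle bundle $\Sigma \times S^1$ glued to $H$ along the section $\Sigma \times \{1\} \sim \partial H$. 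The amalgamated product then collapses to $\Z\mu$. Your closing observation --- Alexander duality gives $H_1(\R^4 \setminus \Sigma) \cong \Z$, and a group generated by one element equals its abelianization --- is correct and would indeed finish the proof once a single meridional generator is exhibited; but exhibiting it requires the van Kampen bookkeeping, not the retraction, and the induction you sketch does not supply it as written.
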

\begin{proof}
Let $H \subset \R^4$ be an embedded handle-body with boundary $\partial H=\Sigma$ that exists by assumption.

The existence of the handle-body implies that $\Sigma$ has a trivial normal bundle. Using $U_\Sigma$ to denote a small tubular neighbourhood of $\Sigma$, the triviality of the normal bundle gives a homotopy equivalence $U_\Sigma \setminus \Sigma \sim \Sigma \times S^1$ under which the intersection $H \cap U_\Sigma$ becomes a section $\Sigma \times \{1\}$. Consider a tubular neighbourhood $U_H \supset H$ that consists of $U_\Sigma$ together with a small normal bundle of $H$ along the interior of $H$. The neighbourhood $U_H \setminus \Sigma$ has the homotopy type of the glued space
$$(\Sigma \times S^1 \: \sqcup \: H)/(\partial H \sim \Sigma \times \{1\}).$$
Using Seifert--van Kampen one readily computes
$$ \pi_1(U_H \setminus \Sigma)=\pi_1(H) \times \Z\mu, $$
where $\mu$ is the generator of the meridian of the fibre of the normal bundle $U_\Sigma \to \Sigma$.

The handle-body $H$ has the homotopy type of a wedge of $1$-spheres. This is also true for the neighbourhood $U_H \supset H$ of which $H$ is a deformation retract. In particular, as follows by a genericity argument, the inclusions $ \R^4 \setminus H \subset \R^4$ and $U_H \setminus H \subset U_H$ induce isomorphisms
$$\pi_1(\R^4 \setminus H) \to \pi_1(\R^4) \cong\{1\} \:\:\text{and}\:\: \pi_1(U_H \setminus H) \to \pi_1(U_H) \cong \pi_1(H)$$
of fundamental groups.

Another application of Seifert--van Kampen now shows that 
$$ \pi_1(\R^4\setminus \Sigma)=\pi_1(\R^4 \setminus H) \star_{\pi_1(U_H \setminus H)} \pi_1\left(U_H \setminus \Sigma\right)=\{1\} \star_{\pi_1(H)} \left(\pi_1(H) \times \Z\mu\right)=\Z\mu.$$
which is the sought identity.
\end{proof}

\begin{figure}[htp]
 \vspace{3mm}
 \labellist
 	\pinlabel $p_x$ at 202 25
 \pinlabel $p_{\theta}$ at 150 58
 \pinlabel $x$ at 124 -1
 	\pinlabel $p_x$ at 95 25
 \pinlabel $p_{\theta}$ at 42 58
 \pinlabel $x$ at 16 -1
	\endlabellist
 \includegraphics[scale=1.2]{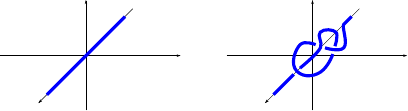}
 \caption{Left: An unknotted arc. Right: A knotted arc that coincides with $\{p_{\theta}=p_x=0\}$ near the boundary and which is nowhere tangent to $\partial_{p_{\theta}}$.}
 \label{fig:knot}
\end{figure}

The main technique that we will apply for constructing knotted surfaces is by excising an annulus from the surface, and then replacing it with a knotted annulus. We proceed with the precise recipe. Consider a surface $(\Sigma,\partial \Sigma) \subset (X^4,\partial X)$ possibly with empty boundary, and a parametrised neighbourhood that intersects the surface in an embedded annulus $\R \times S^1$ 
$$\iota \colon (\R^3 \times S^1, \{(0,0)\} \times \R \times S^1) \hookrightarrow (X\setminus \partial X, \Sigma\setminus \partial X).$$
Let $\Sigma_{\iota,A}$ denote the surface obtained from $\Sigma$ by replacing the annular subset
$\iota(\{(0,0)\} \times \R \times S^1)$ with the knotted annulus
$\iota(A \times S^1)$ for a possibly knotted arc $A \subset \R^3$ that coincides with $\{(0,0)\} \times \R$ outside of $B^3_{R} \subset \R^3$ for some $R > 0$. See Figure \ref{fig:knot} for an example.

Below we investigate how this procedure changes the fundamental group of the complement of the surface, and hence its diffeomorphism class. Since we want to express the change of fundamental group in terms of knot groups (i.e.~the fundamental group of the complement of a compact knot in $\R^3$ or $S^3$), it will be useful to consider the knot $K_A \subset S^3$ obtained by closing up the arc $A \cap B^3_{2R}$ inside $ B^3_{3R} \setminus B^3_R \subset S^3$ by adjoining an unknotted arc. This can e.g. be done by smoothing the piece-wise smooth knot obtained by adjoining an arc entirely contained inside $\partial D^3_{2R}$ that connects the two endpoints of $A \cap \partial D^3_{2R}$. Note that there is a homeomorphism $$\R^3 \setminus A \cong S^3 \setminus K_A$$ and thus $\pi_1(\R^3 \setminus A) =\pi_1(S^3 \setminus K_A)$. We illustrate this construction in Figure \ref{fig:knot2}.

\begin{figure}[htp]
 \vspace{3mm}
 \labellist
 \pinlabel $\color{blue}A$ at 50 15
 \pinlabel $\color{blue}K_A$ at 185 15
	\endlabellist
 \includegraphics[scale=1.2]{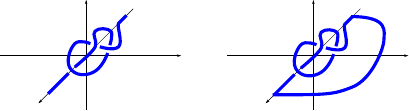}
 \caption{A long properly embedded knotted arc $A \subset \R^3$ and a knot $K_A \subset \R^3$ obtained by closing up the arc; the fundamental groups of the complements in $\R^3$ of these two submanifolds are isomorphic.}
 \label{fig:knot2}
\end{figure}

The following lemma is a direct application of Seifert--van Kampen. It can be seen as a version of the well-known knot-spinning construction, which is a classical method for producing surfaces that are smoothly knotted; see e.g.~\cite[Section 6.1]{Kamada}.
\begin{lma}
\label{lma:svk}
There is an isomorphism
$$
\pi_1\left(X \setminus \Sigma_A\right) \cong 
\pi_1(U_1) \star_{\Z\mu\times\Z\lambda} \pi_1(U_2),
$$
that is induced by the decomposition
\begin{gather*}
X \setminus \Sigma_{\iota,A}= U_1 \cup U_2,\:\:\text{for}\:\: U_1 =X \setminus \left(\Sigma\cup \iota\left( D^3_{2R} \times S^1 \right)\right), \:\: U_2= \iota\left( B^3_{3R} \times S^1\right)\setminus \Sigma_{\iota,A},
\end{gather*}
where the inclusions $U_1 \subset X \setminus \Sigma$ and $(B^3_{3R} \times S^1) \setminus (A \times S^1) \subset (\R^3\setminus A) \times S^1$ induce isomorphisms $\pi_1(U_1)=\pi_1\left(X \setminus \Sigma \right)$ and $\pi_1(U_2)= \pi_1\left(S^3 \setminus K_A\right) \times \Z\lambda,$
and where we have made the canonical identification
$$ \pi_1(U_1 \cap U_2)=\pi_1\left(\left(S^2 \setminus \{\pm N\} \times \R\right) \times S^1\right) = \pi_1\left(S^2 \setminus \{\pm N\} \right) \times \pi_1(S^1) = \Z\mu \times \Z\lambda $$
of the fundamental groups.
\end{lma}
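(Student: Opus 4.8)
The plan is to verify the decomposition $X \setminus \Sigma_{\iota,A} = U_1 \cup U_2$ as an open cover (after slightly thickening the pieces) and then feed it into Seifert--van Kampen. First I would check that the two pieces cover the complement: outside of $\iota(B^3_{2R} \times S^1)$ the modified surface $\Sigma_{\iota,A}$ agrees with $\Sigma$, since $A$ coincides with $\{(0,0)\} \times \R$ there, so $X \setminus (\Sigma \cup \iota(D^3_{2R} \times S^1)) = U_1$ does cover everything outside the neighbourhood, while $U_2 = \iota(B^3_{3R} \times S^1) \setminus \Sigma_{\iota,A}$ covers the neighbourhood; their union is all of $X \setminus \Sigma_{\iota,A}$. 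The intersection $U_1 \cap U_2$ sits inside the shell $\iota((B^3_{3R} \setminus D^3_{2R}) \times S^1)$, where $\Sigma_{\iota,A}$ is the trivial annulus $\{(0,0)\} \times \R \times S^1$ intersected with that region. Since $B^3_{3R} \setminus D^3_{2R}$ deformation retracts onto a sphere $S^2$ and the removed arc meets it in the two poles $\pm N$, we get $U_1 \cap U_2 \simeq (S^2 \setminus \{\pm N\}) \times S^1$, whose fundamental group is $\Z\mu \times \Z\lambda$ with $\mu$ the meridian class of the arc and $\lambda$ the $S^1$-direction class. This is the last displayed identification.

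Next I would identify the fundamental groups of the two pieces. For $U_1$: the inclusion $U_1 = X \setminus (\Sigma \cup \iota(D^3_{2R} \times S^1)) \hookrightarrow X \setminus \Sigma$ is obtained by removing a tubular neighbourhood $\iota(D^3_{2R} \times S^1)$ of a piece of the annulus; since this neighbourhood is a regular neighbourhood of an annular subset of $\Sigma$ and $\Sigma$ itself is already removed, a standard transversality/general-position argument shows removing it does not change $\pi_1$ — any loop can be pushed off the (codimension-two in the ambient, but here we've already removed $\Sigma$) deleted region, giving $\pi_1(U_1) = \pi_1(X \setminus \Sigma)$. For $U_2$: we have $U_2 = \iota(B^3_{3R} \times S^1) \setminus \Sigma_{\iota,A} = \iota((B^3_{3R} \setminus A) \times S^1) \cong (B^3_{3R} \setminus A) \times S^1$. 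Since $A \cap B^3_{3R}$ is an unknotted-outside arc whose knotting is captured by $K_A$, the homeomorphism $\R^3 \setminus A \cong S^3 \setminus K_A$ restricts to give $\pi_1(B^3_{3R} \setminus A) = \pi_1(S^3 \setminus K_A)$ (closing up the arc outside $B^3_{2R}$ by an unknotted arc, as in the construction preceding the lemma), so $\pi_1(U_2) = \pi_1(S^3 \setminus K_A) \times \Z\lambda$.

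Finally, I would apply Seifert--van Kampen to $X \setminus \Sigma_{\iota,A} = U_1 \cup U_2$ with the connectivity hypotheses just established, obtaining
$$
\pi_1(X \setminus \Sigma_{\iota,A}) \cong \pi_1(U_1) \star_{\pi_1(U_1 \cap U_2)} \pi_1(U_2) = \pi_1(U_1) \star_{\Z\mu \times \Z\lambda} \pi_1(U_2),
$$
once I check that the amalgamating maps are the evident ones: $\lambda \in \pi_1(U_1 \cap U_2)$ maps to the $\Z\lambda$ factor of $\pi_1(U_2)$ and to the $S^1$-direction loop in $\pi_1(U_1 \cap U_2) \to \pi_1(U_1)$; the meridian $\mu$ maps to a meridian of $K_A$ inside $\pi_1(S^3 \setminus K_A)$ on the $U_2$ side and to a meridian of $\Sigma$ in $\pi_1(X \setminus \Sigma)$ on the $U_1$ side. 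The main obstacle I anticipate is being careful about the $U_1$ computation — one must justify that excising the solid-torus neighbourhood $\iota(D^3_{2R} \times S^1)$ of the annular part of $\Sigma$ (which is codimension two in $X$) does not alter $\pi_1(X \setminus \Sigma)$; the cleanest way is to note this neighbourhood is a regular neighbourhood of $\iota(\{(0,0)\} \times \R \times S^1) \subset \Sigma$ in $X$, so $U_1$ deformation retracts within $X \setminus \Sigma$ onto its complement, or alternatively to invoke general position since the removed set has codimension $2$ and loops/homotopies of loops are generic. The rest is bookkeeping of the canonical identifications.
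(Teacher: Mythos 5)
Your proof is correct and follows exactly the route the paper has in mind: the paper states the lemma is ``a direct application of Seifert--van Kampen'' without supplying details, and you fill those in with the right cover $U_1\cup U_2$, the correct identification of the intersection with $(S^2\setminus\{\pm N\})\times\R\times S^1$, and the correct computations of $\pi_1(U_1)$ and $\pi_1(U_2)$. One small imprecision worth flagging: the ``alternative'' justification for $\pi_1(U_1)=\pi_1(X\setminus\Sigma)$ via general position, ``since the removed set has codimension $2$,'' is not quite right -- the excised tube $\iota(D^3_{2R}\times S^1)$ is codimension $0$ in $X$ (it is the annulus $\iota(\{(0,0)\}\times\R\times S^1)\subset\Sigma$ that is codimension $2$), so general position does not directly apply; the deformation-retraction argument you give first (radially collapse $(D^3_{2R}\setminus\{\text{axis}\})\times S^1$ onto $(S^2_{2R}\setminus\{\pm N\})\times S^1$, extended by the identity) is the correct justification and suffices.
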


The following two corollaries follow immediately from the above lemma. For the first one, we use the fact that the knot group of the connected sum $K\sharp K_A$ of knots satisfies
$$ \pi_1(K \sharp K_A)=\pi_1(K) \star_{\Z\mu} \pi_1(K_A) $$
where $\Z\mu \subset \pi_1(K), \pi_1(K_A)$ is the subgroup generated by the meridian.

\begin{cor}
\label{cor:cyl}
Assume that $X=\R\times S^3$ and $\Sigma = \R \times K$, which implies that $\pi_1(X \setminus \Sigma) = \pi_1(S^3 \setminus K)$. When the class $\lambda$ from Lemma \ref{lma:svk} above is trivial inside $\pi_{1}(U_1) =\pi_1(X \setminus \Sigma)$, which e.g.~is the case if the annulus in $\Sigma$ parametrised by $\iota|_{\{(0,0)\} \times \R \times S^1}$ is contractible as a map, then there is an isomorphism of fundamental groups
$$\pi_1\left(X \setminus \Sigma_{\iota,A}\right) \cong \pi_1\left(S^3 \setminus \left(K \sharp K_A\right)\right)$$
where $K\sharp K_A$ denotes the connected sum of knots.
\end{cor}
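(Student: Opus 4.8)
The plan is to apply Lemma \ref{lma:svk} verbatim and then simplify the amalgamated product using the two hypotheses. First, by Lemma \ref{lma:svk} we have
$$\pi_1\left(X \setminus \Sigma_{\iota,A}\right) \cong \pi_1(U_1) \star_{\Z\mu \times \Z\lambda} \pi_1(U_2),$$
where $\pi_1(U_1) = \pi_1(X \setminus \Sigma) = \pi_1(S^3 \setminus K)$ (using the identification from the statement, since $\Sigma = \R \times K$ and the $\R$-factor contributes nothing) and $\pi_1(U_2) = \pi_1(S^3 \setminus K_A) \times \Z\lambda$, with the edge group $\Z\mu \times \Z\lambda$ mapping into $\pi_1(U_1)$ by sending $\mu$ to a meridian of $K$ and $\lambda$ to the class of the annulus (the longitude), and into $\pi_1(U_2)$ by sending $\mu$ to a meridian of $K_A$ and $\lambda$ to the generator of the $\Z\lambda$ factor.

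Second, I would invoke the hypothesis that $\lambda$ is trivial in $\pi_1(U_1)$. This collapses the amalgamation: in the pushout, the relation coming from the $\lambda$-generator of the edge group becomes, on the $U_1$-side, the trivial relation, so the $\Z\lambda$ factor of $\pi_1(U_2) = \pi_1(S^3 \setminus K_A) \times \Z\lambda$ gets killed. More precisely, writing the amalgamated product as a pushout of groups, the map $\Z\mu \times \Z\lambda \to \pi_1(U_1)$ factors through the quotient $\Z\mu$, so the pushout is isomorphic to the pushout of $\pi_1(U_1) \leftarrow \Z\mu \to \pi_1(U_2)/\langle\!\langle \lambda \rangle\!\rangle = \pi_1(S^3 \setminus K_A)$, i.e.
$$\pi_1\left(X \setminus \Sigma_{\iota,A}\right) \cong \pi_1(S^3 \setminus K) \star_{\Z\mu} \pi_1(S^3 \setminus K_A).$$
Here $\mu$ maps to a meridian on each side. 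Third, I would match this with the displayed formula for the knot group of a connected sum, $\pi_1(K \sharp K_A) = \pi_1(K) \star_{\Z\mu} \pi_1(K_A)$, recorded just before the corollary, to conclude $\pi_1(X \setminus \Sigma_{\iota,A}) \cong \pi_1(S^3 \setminus (K \sharp K_A))$. Finally, for the parenthetical remark, if the annulus $\iota|_{\{(0,0)\} \times \R \times S^1}$ is null-homotopic as a map into $X \setminus \Sigma$ then its core circle class — which is exactly $\lambda$ — is trivial in $\pi_1(U_1)$, so the hypothesis is met.

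The only genuinely delicate point is the bookkeeping of which generator of the edge group $\Z\mu \times \Z\lambda$ maps to what on each side, and checking that the element called $\lambda$ in Lemma \ref{lma:svk} really is the core class of the excised annulus (so that the ``contractible annulus'' condition implies $\lambda = 1$ in $\pi_1(U_1)$); this is forced by the identification $\pi_1(U_1 \cap U_2) = \pi_1(S^2 \setminus \{\pm N\}) \times \pi_1(S^1)$ in the lemma, where the $S^1$-factor is the annulus direction. Everything else is a formal manipulation of pushouts of groups, so I expect no substantive obstacle beyond stating these identifications carefully.
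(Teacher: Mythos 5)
Your proof is correct and takes essentially the same route as the paper, which simply asserts that the corollary ``follows immediately'' from Lemma \ref{lma:svk} together with the stated amalgamated-product formula for the knot group of a connected sum; your write-up supplies exactly the pushout manipulation the paper leaves implicit (the map $\Z\mu\times\Z\lambda\to\pi_1(U_1)$ factors through $\Z\mu$, so the $\Z\lambda$ factor of $\pi_1(U_2)$ is killed, reducing to $\pi_1(S^3\setminus K)\star_{\Z\mu}\pi_1(S^3\setminus K_A)$). One small slip in the closing remark: the annulus $\iota|_{\{(0,0)\}\times\R\times S^1}$ lies in $\Sigma$, not in $X\setminus\Sigma$, and $\lambda$ is not the core circle itself but a normal push-off of it into $X\setminus\Sigma$; the actual implication is that contractibility of the annulus in $\Sigma$ lets one push off a bounding disc for the core to produce a disc in $X\setminus\Sigma$ bounding $\lambda$, whence $\lambda=1$ in $\pi_1(U_1)$.
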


\begin{cor}
\label{cor:tori}
Assume that $X=\R \times S^3$ and that $\Sigma \subset X$ is an embedded unknotted torus (i.e. which bounds an embedded three-dimensional handle-body). When the class $\lambda$ from Lemma \ref{lma:svk} above is trivial inside $\pi_{1}(U_1) =\pi_1(X \setminus \Sigma)$, which e.g.~is the case if the annulus in $\Sigma$ parametrised by $\iota|_{\{(0,0)\} \times \R \times S^1}$ is contractible as a map, then 
there is an isomorphism 
$$\pi_1\left(X \setminus \Sigma_{\iota,A}\right) \cong \pi_1\left(S^3 \setminus K_A\right)$$
of fundamental groups.
\end{cor}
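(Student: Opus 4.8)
The plan is to apply Lemma \ref{lma:svk} directly, using the hypothesis to simplify the amalgamated free product, and then the preceding Lemma (computing $\pi_1$ of the complement of an unknotted surface) to identify $\pi_1(U_1)$ explicitly. First I would invoke Lemma \ref{lma:svk} to obtain
$$\pi_1\left(X \setminus \Sigma_{\iota,A}\right) \cong \pi_1(U_1) \star_{\Z\mu \times \Z\lambda} \pi_1(U_2),$$
where $\pi_1(U_2) \cong \pi_1(S^3 \setminus K_A) \times \Z\lambda$ and $\pi_1(U_1) = \pi_1(X \setminus \Sigma)$. Since $\Sigma$ is an unknotted torus, the preceding Lemma gives $\pi_1(U_1) = \pi_1(X \setminus \Sigma) \cong \Z\mu$, generated by the meridian. (One should check that the meridian class appearing in Lemma \ref{lma:svk}, i.e.\ the generator of $\pi_1(S^2 \setminus \{\pm N\})$ in the fibre sphere of the normal bundle, is identified with this generator $\mu$; this is exactly the matching of meridians built into the statement of Lemma \ref{lma:svk}.)

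Next I would use the hypothesis that $\lambda$ is trivial in $\pi_1(U_1)$. The amalgamating subgroup $\Z\mu \times \Z\lambda$ maps into $\pi_1(U_1) = \Z\mu$ by killing $\lambda$ and sending $\mu$ to the generator, and into $\pi_1(U_2) = \pi_1(S^3 \setminus K_A) \times \Z\lambda$ by sending $\mu$ to the meridian of $K_A$ and $\lambda$ to the generator of the $\Z\lambda$ factor. In the amalgamated product, the relation coming from $U_1$ forces $\lambda = 1$, which in $\pi_1(U_2)$ collapses the $\Z\lambda$ direct factor; what survives of $\pi_1(U_2)$ is $\pi_1(S^3 \setminus K_A)$, and what survives of the amalgamation is gluing along $\Z\mu$, where $\mu$ now is simultaneously the generator of $\pi_1(U_1) = \Z\mu$ and the meridian of $K_A$. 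Hence
$$\pi_1\left(X \setminus \Sigma_{\iota,A}\right) \cong \Z\mu \star_{\Z\mu} \pi_1(S^3 \setminus K_A) \cong \pi_1(S^3 \setminus K_A),$$
since amalgamating a group over the whole of one of the factors returns the other factor. Finally I would note that the stated sufficient condition --- that $\iota|_{\{(0,0)\}\times\R\times S^1}$ is contractible as a map --- indeed implies $\lambda = 1$ in $\pi_1(U_1)$: the class $\lambda$ is represented by the loop $\iota(\{(0,0)\} \times \{t\} \times S^1)$ pushed slightly off $\Sigma$, which is freely homotopic in $X \setminus \Sigma$ to the core circle of the annulus, and contractibility of the parametrising map makes this loop nullhomotopic already in $X$ (hence, after a small perturbation, in $X \setminus \Sigma$).

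The main obstacle is purely bookkeeping rather than conceptual: one must be careful that the three appearances of "$\mu$" --- the meridian of $\Sigma$ in $X$, the generator of $\pi_1(S^2 \setminus \{\pm N\})$ in the local model of Lemma \ref{lma:svk}, and the meridian of $K_A$ in $S^3$ --- are consistently identified under the homeomorphism $\R^3 \setminus A \cong S^3 \setminus K_A$ and under the inclusion-induced maps, so that the pushout genuinely collapses as claimed. Once the identifications in Lemma \ref{lma:svk} are taken at face value (as the statement permits), the computation is the short algebraic manipulation above.
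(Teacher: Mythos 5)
Your proof is correct and matches the paper's intent: the paper states Corollary~\ref{cor:tori} as following immediately from Lemma~\ref{lma:svk} together with the preceding lemma computing $\pi_1$ of the complement of an unknotted surface, and your argument simply writes out the (routine) amalgamated-product simplification that makes this immediacy explicit. The algebra — identifying $\pi_1(U_1)\cong\Z\mu$, killing $\lambda$ via the hypothesis, and collapsing $\Z\mu\star_{\Z\mu}\pi_1(S^3\setminus K_A)\cong\pi_1(S^3\setminus K_A)$ — is exactly what the paper leaves to the reader.
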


\subsection{Constructing knotted Lagrangian cylinders}

Here we give a general technique for constructing knotted Lagrangians by deforming a cylindrical Lagrangian in an annular subset. We first perform the construction from Subsection \ref{sec:fundgroup}, deforming the Lagrangian surface near an embedded annulus, in order to produce a totally real surface in a different smooth isotopy class. This construction is related to the so-called spinning construction, which is a general method for producing knotted surfaces; see \cite[Section 6.1]{Kamada}. Then we deform the totally real surface to a Lagrangian by applying Theorem \ref{thm:knotted}.

We start by considering a trivial Lagrangian cylinder
$$ \R \times \Lambda \subset (\R_t \times Y,d(e^t\alpha))$$
in the symplectisation. Any smooth embedding $i \colon [-1,1]_x \times S^1_\theta \hookrightarrow \R \times \Lambda$ of an annulus can be extended to a Weinstein neighbourhood. More precisely, there exists a symplectic embedding
$$ \iota \colon \left(D^*\left([-1,1]_x \times S^1_{\theta}\right),d\left(p_x\,dx+p_{\theta}\,d\theta\right)\right) \hookrightarrow \left(\R_t \times Y,d\left(e^t\alpha\right)\right)$$
that restricts to the original embedding $i$ on the zero-section $0_{[-1,1] \times S^1}$. In addition we fix the choice of a compatible almost complex structure $J$ that satisfies the property that $J \partial_{\theta}=-\partial_{p_{\theta}}$.

Replacing the Lagrangian annulus $0_{[-1,1] \times S^1}$ with an annulus of the form
$$ A \times 0_{S^1} \subset D^*([-1,1] \times S^1) $$
where the subset
$$A \subset D^*[-1,1] \times \R_{p_{\theta}}=[-1,1]_x \times \R_{p_x} \times \R_{p_{\theta}}$$
is a knotted arc that coincides with $0_{[-1,1]} \times \{0\}$ near the boundary provides a realisation of the construction of $(\R \times \Lambda)_{\iota,A}$ described above. In the case when $A$ is taken to be nowhere tangent to the last coordinate $\partial_{p_{\theta}}$, which always can be achieved after a generic small perturbation, the new cobordism is moreover totally real for $J$ and, of course, still Lagrangian outside of a compact subset. See Figure \ref{fig:knot} for an example of a knotted arc of the sought form.

The following result is now an immediate corollary of Theorem \ref{thm:main}.
\begin{cor}
\label{cor:lagrel}
Denote by $\Lambda_k$ the Legendrian obtained from $j^10 \subset J^1\R$ by performing $k$ stabilisation of both signs. For any $\iota$ and $A$ as above, for $k \gg 0$ sufficiently large, we can find a Lagrangian concordance $L$ from $\Lambda_k$ to itself which is smoothly isotopic to $(\R \times j^10)_{\iota,A}$ through concordances that are cylindrical outside of a compact subset.

Furthermore, after a rescaling of the contact vector-space $J^1 \times \R = \R^3$, we may assume that $\Lambda_k$ coincides with $j^10$ outside of some arbitrarily small compact neighbourhood $U$, and that $L$ coincides with $\R \times \Lambda_k$ outside of some arbitrarily small compact subset of $\R \times Y$ that is contained inside $\R \times U$.
\end{cor}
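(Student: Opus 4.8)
The plan is to deduce Corollary~\ref{cor:lagrel} from Theorem~\ref{thm:main} by checking that the totally real surface built via the excision construction of Subsection~\ref{sec:fundgroup} satisfies the hypotheses of that theorem. First I would take $\Sigma$ to be the totally real concordance $(\R \times j^10)_{\iota,A}$ obtained by replacing the zero-section annulus $0_{[-1,1]\times S^1}$ inside the Weinstein neighbourhood $\iota(D^*([-1,1]\times S^1))$ with $A\times 0_{S^1}$, where $A$ is a knotted arc nowhere tangent to $\partial_{p_\theta}$; this is genuinely Lagrangian outside a compact set and totally real everywhere for the chosen $J$ with $J\partial_\theta=-\partial_{p_\theta}$. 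Since the modification is supported in a compact subset of the interior and the ends are unchanged, the boundary Legendrians are $K_\pm=j^10$, which are Legendrian to begin with. Therefore the second bullet point of Theorem~\ref{thm:main} applies: after compactifying and extending by a trivial cylinder, there is a $C^0$-small smooth isotopy taking $\Sigma$ to a Lagrangian concordance $L$ whose concave and convex boundaries are each obtained from $j^10$ by adding $k$ positive and $k$ negative stabilisations for some $k\gg0$; this is precisely $\Lambda_k$. The smooth isotopy through concordances cylindrical outside a compact set is exactly the content of the theorem, which also lets us take $A$ (equivalently $K_A$) to be whatever knot we wish, so the first paragraph of the corollary follows directly.

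For the ``furthermore'' paragraph I would invoke the naturality of the construction under rescaling of the contact vector space $J^1\R \times \R = \R^3$, together with the arc-fixing refinement in Theorem~\ref{thm:main}. The idea is that the stabilisations are a local modification of $j^10$ near the origin: the Legendrian $\Lambda_k$ differs from $j^10$ only inside some compact neighbourhood, and after applying the contact scaling $(x,y,z)\mapsto (\rho x,\rho y,\rho^2 z)$ (which preserves the contact structure $dz - y\,dx$ up to scaling) with $\rho$ small, we can shrink that neighbourhood to lie inside any prescribed arbitrarily small $U$. Concretely, one takes an arc $A_0 \subset j^10$ away from the region where the stabilisation zig-zags and the excision annulus live, applies the first bullet point of Theorem~\ref{thm:main} to keep the deformation fixed over an open set $B$ containing most of $A_0$, and then rescales so that both the stabilisation region and the support of the isotopy of $L$ are compressed into $\R \times U$. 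Since rescaling is a contactomorphism (up to conformal factor) of $\R^3$ and lifts to a cylindrical Hamiltonian-type diffeomorphism of the symplectisation preserving the relevant structures, the smooth isotopy class of $L$ and the fact that it is cylindrical outside a compact set are unaffected.

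The main obstacle I anticipate is bookkeeping rather than genuinely new mathematics: one must be careful that the excision neighbourhood $\iota(D^*([-1,1]\times S^1))$, the stabilisation zig-zags producing $\Lambda_k$, and the region where the $C^0$-small isotopy of Theorem~\ref{thm:main} is non-trivial are all kept mutually disjoint (or at least suitably nested) so that the arc-fixing clauses can be applied consistently and so that the final rescaling genuinely confines everything to $\R \times U$. In particular one needs to check that the number $k$ of required stabilisations, which Theorem~\ref{thm:main} only guarantees to be ``sufficiently large,'' does not depend on the rescaling parameter $\rho$ in a way that obstructs the shrinking; but since $k$ depends only on the totally real concordance $\Sigma$ up to the relevant equivalence, and rescaling before versus after applying the theorem gives the same isotopy class, this dependence is harmless. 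Modulo these compatibility checks, the corollary is an immediate translation of Theorem~\ref{thm:main} into the language of the excision construction.
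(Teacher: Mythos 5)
Your argument matches the paper's, which simply declares the corollary an immediate consequence of Theorem~\ref{thm:main} after observing that $(\R\times j^10)_{\iota,A}$ is totally real with Legendrian boundary $j^10$; you correctly invoke the second bullet of Theorem~\ref{thm:main} (giving the same $k$ at both ends) and explain the ``furthermore'' clause via the arc-fixing refinement together with the contact rescaling of $\R^3$. The bookkeeping concerns you flag are exactly the ones the paper leaves implicit, and your resolution of them is sound.
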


\subsection{Proof of Theorem \ref{thm:knotted}}

\emph{The case of the standard contact structure $\xi=\xi_{std}$ on $S^3$:}

We consider an arbitrary Legendrian knot $\Lambda \subset S^3$ and its Lagrangian cylinder $\R \times \Lambda$. Consider a small contact Darboux chart $J^1\R \hookrightarrow S^3$ in which $\Lambda$ is identified with $j^10$. Add $k \gg 0$ many stabilisations of both signs to the latter Legendrian arc inside the Darboux chart, and denote the resulting stabilised Legendrian knot by $\Lambda_k$. Corollary \ref{cor:lagrel} now provides a Lagrangian realisation of $(\R \times \Lambda)_{\iota,A}$ for any knotted arc $A$ and embedding $\iota$ as above, where this Lagrangian concordance coincides with the trivial cylinder $\R \times \Lambda$ outside of the cylinder over the above Darboux chart. This Lagrangian concordance has both ends over $\Lambda_k$.

We may use Corollary \ref{cor:cyl} to compute the fundamental group of the complement of the deformed Lagrangian cobordism $(\R \times \Lambda)_{\iota,A}$. Consider $N$ different smooth knot classes $K_{A_1},\ldots,K_{A_N} \subset S^3$ that have pairwise non-isomorphic knot groups $\pi_1(S^3 \setminus K_{A_i})$. Taking $k \gg 0$ sufficiently large and assuming that we started with an unknotted Legendrian $\Lambda \subset S^3$, we can construct Lagrangian concordances from $\Lambda_k$ to itself whose complements realise all these different knot groups. The result follows from this.

\emph{The case of an overtwisted contact structure $\xi=\xi_{ot}$ on $S^3$:}

Consider Legendrian knot $\Lambda \subset S^3$ that lives in the complement of an overtwisted disc, and its Lagrangian cylinder $\R \times \Lambda$.

Recall the following standard result, see e.g.~\cite[Lemma 2.7]{Dimitroglou:Exact}. 
\begin{lma}
Any Legendrian knot $\Lambda \subset (Y,\xi)$ that lives in the complement of an overtwisted disc can be obtained from some different Legendrian knot in the same smooth isotopy class by adding $k$ stabilisations of both signs.
\end{lma}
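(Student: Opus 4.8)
The plan is to use the fact that, in an overtwisted contact manifold, the complement of an overtwisted disc is where all the flexibility lives, and to exploit the standard destabilisation-type result for Legendrians supported away from the overtwisted disc. First I would recall the setup: $\Lambda$ sits in a ball $B \subset Y$ disjoint from a fixed overtwisted disc $\Delta$, so the complement $Y \setminus \Delta$ is an overtwisted contact manifold in its own right (more precisely, $\Lambda$ is contained in an overtwisted piece), and inside this overtwisted complement we have full flexibility of Legendrian isotopies in a fixed smooth isotopy class. The key point is that adding a pair of stabilisations of opposite sign to a Legendrian does not change its smooth isotopy class, so the statement is really asking for the \emph{reverse} operation: to realise $\Lambda$ itself as the $k$-fold $(\pm)$-stabilisation of \emph{some} Legendrian $\Lambda'$ in the same smooth class.

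The main step is to produce, within the given smooth isotopy class, a Legendrian representative $\Lambda'$ together with a sequence of stabilisations of alternating sign that arrives at $\Lambda$. I would do this by first choosing any Legendrian representative $\Lambda_0$ of the smooth knot class that also lies in the complement of $\Delta$ (which can be arranged since the smooth isotopy taking $\Lambda$ to $\Lambda_0$ can be pushed off $\Delta$). Then, inside the overtwisted complement $Y \setminus \Delta$, the Legendrian isotopy classification is governed by an $h$-principle-type statement: two Legendrians in the same smooth class that both lie in the overtwisted region become Legendrian isotopic there after sufficiently many stabilisations of both signs — this is the Fuchs–Tabachnikov phenomenon in the presence of an overtwisted disc, and it is exactly the content cited from \cite[Lemma 2.7]{Dimitroglou:Exact}. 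So after stabilising $\Lambda_0$ by $k$ positive and $k$ negative stabilisations for $k \gg 0$, the result is Legendrian isotopic to $\Lambda$ itself (not merely to $\Lambda$ stabilised). Tracing this backwards, $\Lambda$ is obtained from $\Lambda_0$, a Legendrian in the same smooth class, by adding $k$ stabilisations of both signs, which is precisely what is claimed with $\Lambda' = \Lambda_0$.

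The part I expect to be the main obstacle is the bookkeeping of \emph{where} the stabilisations and Legendrian isotopies are supported: one must be careful that all the modifications can be carried out inside a Darboux ball (for the stabilisations) together with a neighbourhood of the overtwisted disc (which absorbs the loss of tightness needed to make the Fuchs–Tabachnikov argument go through). In a genuinely tight ambient manifold the number of stabilisations is an isotopy invariant, so there is no hope of realising a given $\Lambda$ as a stabilisation of another Legendrian; the overtwisted disc is exactly what breaks this, and the technical heart of the cited lemma is that a single overtwisted disc in the complement suffices to make ``$\Lambda$ is a $k$-fold $(\pm)$-stabilisation'' achievable for large $k$. I would therefore lean on \cite[Lemma 2.7]{Dimitroglou:Exact} for this step rather than reprove it, and simply verify that its hypotheses are met: $\Lambda$ is a Legendrian knot, $Y$ is closed (here $S^3$), and there is an overtwisted disc in the complement of $\Lambda$. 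Everything else is a formal rephrasing, so the proof is short.
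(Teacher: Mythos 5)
Your argument has a genuine gap, and it is precisely at the step you flag as the ``technical heart.'' The Fuchs--Tabachnikov theorem gives that two smoothly isotopic Legendrians become Legendrian isotopic after stabilising \emph{both} of them sufficiently many times of both signs; it does not give that stabilising only $\Lambda_0$ brings it to the \emph{unstabilised} $\Lambda$. That stronger conclusion is exactly what needs a mechanism, and the only justification you offer is an appeal to \cite[Lemma 2.7]{Dimitroglou:Exact} --- but the lemma you have been asked to prove \emph{is} (a restatement of) that result, as the paper's text ``Recall the following standard result, see e.g.~\cite[Lemma 2.7]{Dimitroglou:Exact}'' makes clear. So your proposal either misreads Fuchs--Tabachnikov or is circular; in either case the key step is asserted rather than proved.

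The actual ingredient that the overtwisted disc supplies, and which your write-up never uses concretely, is the following standard fact: a Legendrian unknot in a Darboux chart in the complement of an overtwisted disc is itself a stabilisation of another Legendrian unknot (this can be seen via Giroux' convex surface theory). Granting this, the paper's argument is short: take connected sum of $\Lambda$ with $2k$ such Darboux unknots, $k$ of each orientation. Connected sum with the standard Legendrian unknot of $\tb=-1$ preserves the Legendrian isotopy class, so this does not change $\Lambda$; but each summand is a stabilisation, so the sum \emph{exhibits} $\Lambda$ as a $k$-fold positive and $k$-fold negative stabilisation of $\Lambda' = \Lambda \,\sharp\, U_1' \,\sharp\, \cdots$, which lies in the same smooth isotopy class. (The paper also sketches a purely convex-surface-theoretic proof: extend a convex annulus bounding $\Lambda$ into the overtwisted complement so as to produce explicit bypasses effecting the destabilisations.) This is the input you would need to supply to replace the circular citation.
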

\begin{rmk}
The Legendrian $\Lambda$ is contained in the complement of overtwisted discs, but the Darboux balls in which the stabilisations live will typically need to intersect some overtwisted disc.
\end{rmk}
\begin{proof}
This statement follows from the standard fact that the unknot in the complement of an overtwisted disc is stabilised, which e.g.~can be proven using Giroux' theory of convex surfaces \cite{Giroux:Convexite}. Using this fact, one can then perform connected sum of $\Lambda$ with $2k$ suitably oriented unknots. Since connected sums with unknots preserve the Legendrian isotopy class, we conclude that $\Lambda$ admits $k$ positive and $k$ negative destabilisations.

Alternatively, one can use Giroux' theory of convex surfaces directly to exhibit the existence of the stabilisations. Recall that any Legendrian $\Lambda$ can be realised as the boundary of a convex annulus with Legendrian boundary, with has precisely $4k+2$ diving curves, where the boundary of each dividing curve meets both boundary components of the annulus; see the left-hand side of Figure \ref{fig:destab} which shows the case $k=1$. Since this annulus can be assumed to live in the complement of an overtwisted disc, its complement is overtwisted.

The flexibility entailed by the overtwistedness of the complement allows us to extend the aforementioned annulus to a convex annulus as shown on the right-hand side of Figure \ref{fig:destab}, for which one boundary component coincides with a boundary component of the old annulus, while the other boundary component is a Legendrian knot $\Lambda'$ obtained from $\Lambda$ by $k$ positive and $k$ negative de-stabilisations. In other words, $\Lambda$ admits $k$ positive and $k$ negative destabilisations in some Darboux chart. \end{proof}

\begin{figure}[htp]
 \vspace{3mm}
 \labellist
 \pinlabel $\color{blue}\Lambda$ at 10 18
 \pinlabel $\color{blue}\Lambda'$ at 140 15
	\endlabellist
 \includegraphics[scale=1.5]{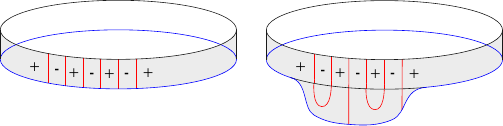}
 \caption{Left: A convex annulus with Legendrian boundary and six dividing curves, each with boundary intersecting both components of the boundary of the annulus. Right: An extension of the annulus along one of the boundary component $\Lambda$ (shown in blue on the left), exhibiting a positive and a negative stabilisation of that Legendrian. The destabilised Legendrian is $\Lambda'$ shown in blue on the right.}
 \label{fig:destab}
\end{figure}

Since any Legendrian that lives in the complement of $\Lambda$ admits arbitrarily many stabilisations of both signs by the above lemma, we can find a standard Darboux neighbourhood $J^1\R$ in which $\Lambda$ is identified with a Legendrian arc $\Lambda_k \subset J^1\R$ obtained from $j^10$ by $k \gg 0$ stabilisations of both signs.

Since $k\gg0$ can be taken to be arbitrary (for a suitable Darboux neighbourhood), Corollary \ref{cor:lagrel} can be used to construct a Lagrangian realisation of $(\R \times \Lambda)_{\iota,A}$ for any knotted arc $A$ and embedding $\iota$ as above. We can then argue as in the case $\xi=\xi_{std}$ above to produce Lagrangian concordances between $\Lambda$ and itself whose complements have fundamental groups that realise infinitely many different knot groups.
\qed

\subsection{Proof of Theorem \ref{thm:ot}}
Consider the two component Legendrian $\Lambda \cup \Lambda^+ \subset (S^3,\alpha_{ot})$ consisting of a knot $\Lambda$ and its push-off $\Lambda^+$ by the positive Reeb flow, constructed inside the complement of an overtwisted disc in $Y$. The main result \cite[Theorem 1.1]{Dimitroglou:Exact} provides an exact Lagrangian torus $\Sigma \subset \R \times S^3$ of vanishing Maslov class which, when intersected with $[-1,1] \times S^3 \subset \R \times S^3$, consists of the pair of Lagrangian cylinders
$$ \Sigma \cap \left([-1,1] \times S^3\right) = [-1,1] \times \left(\Lambda \cup \Lambda^+\right).$$ 
Moreover, the torus $\Sigma$ can be assumed to bound an embedded handle-body.

The goal is to replace the cylinder $[-1,1] \times \Lambda$ by a Lagrangian cylinder smoothly isotopic to $([-1,1] \times \Lambda)_{\iota,A}$ for a map $\iota$ that parametrises a annulus in $[-1,1] \times \Lambda$ whose inclusion is null-homotopic, and where $A$ is an arbitrary knotted arc. Furthermore, we want to perform this construction and the isotopy inside $[-1,1] \times S^3 \setminus \left([-1,1] \times \Lambda^+\right).$ The result then follows from Corollary \ref{cor:tori} above, since there are infinitely many different knot groups of knots in $S^3$.

The construction of the Lagrangian concordance in $[-1,1] \times S^3 \setminus \left([-1,1] \times \Lambda^+\right)$ is performed as in the proof of Theorem \ref{thm:knotted}. Since $\Lambda \cup \Lambda^+$ lives in the complement of an overtwisted disc, we can find a Darboux chart of $S^3$ that is disjoint from $\Lambda^+$, and which intersects $\Lambda$ in a standard Legendrian arc $\Lambda_k \subset J^1\R$ that is obtained from the zero-section $j^10$ by adding $k$ positive and $k$ negative stabilisations, where $k > 0$ is arbitrary. Corollary \ref{cor:lagrel} produces the sought Lagrangian realisation of $([-1,1] \times \Lambda_k)_{\iota,A}$ for $k \gg 0$ sufficiently large.
\qed

\section{Constructing the approximation (Proof of Theorem \ref{thm:main})}

\subsection{A standard neighbourhood theorem}

First we need to establish a suitable standard neighbourhood theorem for two-dimensional totally real submanifolds of four-dimensional symplectic manifolds. Of course, since the symplectic form restricted to a totally real surface does not have a unique form, the standard neighbourhood depends on the totally real embedding.
\begin{prp}
\label{prp:stdform}
A totally real embedding of an open orientable surface 
$$\varphi \colon \Sigma^2 \hookrightarrow (X^4,d\eta,J)$$
for an almost complex structure $J$ compatible with the symplectic form $d\eta$ can be extended to a symplectic embedding
$$\Phi \colon (O,d\lambda) \hookrightarrow (X^4,d\eta)$$ 
of a neighbourhood $O \subset T^*\Sigma$ of the graph $\Gamma_{\varphi^*\eta}$ of the one-form $\varphi^*\eta \in \Omega^1(\Sigma)$, where $\lambda \in \Omega^1(T^*\Sigma)$ denotes the tautological one-form.

In addition, if $\varphi^*\eta$ vanishes in some neighbourhood $U \subset \Sigma$, where we already have a symplectic identification $\Psi \colon (D^*U,d\lambda) \hookrightarrow (X,d\eta)$ on which $\Psi|_{0_U}=\varphi$, then we may assume that $\Phi=\Psi$ holds in $O \cap T^*V \subset O \cap T^*\Sigma$ for any choice of open subset $V \subset U$ with $\overline{V} \subset U$.
\end{prp}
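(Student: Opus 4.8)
The plan is to reduce the statement to the classical Weinstein neighbourhood theorem for Lagrangian submanifolds, via a preliminary symplectomorphism that ``straightens'' the totally real embedding into a Lagrangian one. Concretely, I would first observe that the graph $\Gamma_{\varphi^*\eta} \subset T^*\Sigma$ is a Lagrangian submanifold of $(T^*\Sigma, d\lambda)$ precisely because $d(\varphi^*\eta)=\varphi^*(d\eta)=0$, as $\Sigma$ carries the pullback of a symplectic form under a map whose image has half the dimension, so $\varphi^*(d\eta)$ vanishes identically. (This is where the ``symplectic form restricted to a totally real surface does not have a unique form'' caveat enters: the relevant closed one-form $\varphi^*\eta$ is not determined by $\varphi$, only its cohomology class is, and this is why the neighbourhood genuinely depends on the embedding.) Thus the section $\Gamma_{\varphi^*\eta}$ is fibrewise-translation of the zero-section, and via the obvious translation symplectomorphism of $(T^*\Sigma, d\lambda)$ we may work with a neighbourhood of the zero-section instead.

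Next I would use the compatibility of $J$ with $d\eta$ to produce a symplectic normal form along $\varphi(\Sigma)$. Since $\Sigma$ is totally real, $T X|_{\varphi(\Sigma)} = T\varphi(\Sigma) \oplus J T\varphi(\Sigma)$, and the bundle isomorphism $v \mapsto d\eta(J v, \cdot)$ identifies the normal bundle $J T\varphi(\Sigma) \cong T^*\varphi(\Sigma)$; because $J$ is $d\eta$-compatible, one checks that this identification carries the linearised symplectic form along $\varphi(\Sigma)$ to the canonical symplectic form on $T^*\Sigma$ near the zero-section (this is the standard computation underlying the almost-complex version of the Weinstein theorem, e.g.\ identifying $T^*\Sigma$ with a tubular neighbourhood via the exponential map of a $J$-compatible metric). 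Having matched the symplectic forms to first order along the zero-section, one applies the Moser trick: the two closed two-forms agree on the zero-section, so their difference is $d$ of a one-form vanishing to second order there, and integrating the corresponding time-dependent vector field produces the desired symplectic embedding $\Phi$ of a (possibly smaller) neighbourhood $O$ of $\Gamma_{\varphi^*\eta}$. Composing with the translation from the first step gives the claimed $\Phi \colon (O,d\lambda) \hookrightarrow (X, d\eta)$ restricting to $\varphi$ on $\Gamma_{\varphi^*\eta}$.

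For the relative refinement, suppose $\varphi^*\eta$ vanishes on $U$ and a symplectic identification $\Psi \colon (D^*U, d\lambda) \hookrightarrow (X, d\eta)$ with $\Psi|_{0_U} = \varphi$ is already given. Here I would run the above construction so that, over $V$ with $\overline V \subset U$, all choices are made to agree with $\Psi$: choose the $J$-compatible metric and the identification of the normal bundle to coincide with the ones determined by $\Psi$ over a neighbourhood of $\overline V$ (possible since $\Psi$ itself exhibits such data there, and partitions of unity interpolate between this and an arbitrary choice outside $U$), and then perform the Moser isotopy with a cutoff so that the interpolating one-form is identically zero over $V$ — this is legitimate because over $V$ the two symplectic forms already coincide (both equal $\Psi^*d\eta$ pushed to $T^*\Sigma$). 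The resulting $\Phi$ then agrees with $\Psi$ on $O \cap T^*V$.

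The main obstacle I anticipate is the bookkeeping in this relative Moser argument: one must ensure that the cutoff function used to localise the Moser vector field does not destroy closedness or the second-order vanishing of the interpolating form away from $V$, while simultaneously guaranteeing exact agreement with $\Psi$ on $V$; the cleanest route is probably to first arrange, by an initial ambient symplectic isotopy supported near $\overline U$, that the two candidate neighbourhood maps have the \emph{same} $2$-jet along $0_V$, reducing the remaining interpolation to one between forms that already coincide on a neighbourhood of $0_V$, where the Moser trick is automatically trivial over $V$. Everything else — the Lagrangian property of the graph, the linear-algebra identification of the normal bundle, the non-relative Moser step — is routine.
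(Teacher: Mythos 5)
There is a fundamental error in the first step that invalidates the whole approach: you claim that $\varphi^*(d\eta)=0$ ``as $\Sigma$ carries the pullback of a symplectic form under a map whose image has half the dimension.'' This is false. The pullback of a symplectic form to a half-dimensional submanifold vanishes if and only if the submanifold is Lagrangian; for a totally real but non-Lagrangian surface (for instance a symplectic curve with a compatible $J$), $\varphi^*(d\eta)$ is a nonvanishing area form. Consequently $\varphi^*\eta$ is not closed, the graph $\Gamma_{\varphi^*\eta}\subset(T^*\Sigma,d\lambda)$ is \emph{not} Lagrangian, and fibrewise translation by $\varphi^*\eta$ is not a symplectomorphism, so there is no reduction to the zero section. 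Your subsequent ``almost-complex Weinstein'' step cannot get off the ground either: any bundle identification of the normal bundle with $T^*\Sigma$ matches the zero section (on which $d\lambda$ vanishes) with $\varphi(\Sigma)$ (on which $d\eta$ restricts to $\varphi^*(d\eta)\neq 0$), so the two closed forms already disagree at order zero along the model submanifold and Moser's trick does not apply. This mismatch is precisely why the paper's remark before the proposition says that the standard neighbourhood depends on the totally real embedding, unlike the Lagrangian case.

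The correct mechanism---and the reason the model is the graph $\Gamma_{\varphi^*\eta}$ rather than $0_\Sigma$---is that $\lambda$ pulls back to $\varphi^*\eta$ on the graph, so $d\lambda\vert_{T\Gamma}=\varphi^*(d\eta)$ matches $d\eta\vert_{T\varphi(\Sigma)}$ tautologically, without any vanishing assumption. One then extends $T\varphi$ to a symplectic bundle isomorphism $T(T^*\Sigma)\vert_\Gamma\to TX\vert_{\varphi(\Sigma)}$: on the source take the vertical (fibre) Lagrangian complement to $T\Gamma$; on the target, start with $JT\varphi(\Sigma)$, which is transverse because $\varphi$ is totally real, and homotope it to a Lagrangian complement as in Lemma~\ref{lma:linalg1}; the map on complements is then pinned down by the duality of Lemma~\ref{lma:linalg2}. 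Only with this bundle map in hand does the symplectic neighbourhood theorem (Moser relative to a submanifold) produce $\Phi$. Your relative refinement is reasonable in spirit and does match the structure of the paper's proof, but it rests on the broken non-relative step.
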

\begin{proof}
The restriction of the tautological one-form $\lambda \in \Omega^1(T^*\Sigma)$ to the graph
$$\Gamma \coloneqq \Gamma_{\varphi^*\eta} \subset T^*\Sigma$$
is equal to $\varphi^*\eta$ itself. Hence, the tautological symplectic form $d\lambda|_{T\Gamma}$ restricted to the same graph is equal to $\varphi^*d\eta$.

The next step is to extend the tangent map
$$ T\varphi \colon T\Gamma \to T\varphi(\Sigma) \subset TX $$
induced by $\varphi$ to a symplectic bundle map
$$ F \colon T(T^*\Sigma)|_\Gamma \to TX|_{\varphi(\Sigma)}.$$
This will be carried out in the remainder of the proof.

Along $\Gamma \subset T^*\Sigma$ we choose a complement to $T\Gamma$ that consists of the Lagrangian planes that are tangent to the fibre-direction of $T^*\Sigma$. Next, we proceed to construct a Lagrangian complement to $T\varphi(\Sigma)$. 

First, choose the smooth family of complements $JT\varphi(\Sigma)$ to $T\varphi(\Sigma)$. The assumption that $\varphi(\Sigma)$ is totally real is equivalent to the fact that $JT\varphi(\Sigma) \cap T\varphi(\Sigma)=\{0\}$ is a transverse intersection. Note that, by the compatibility of $J$, it follows that the application of $J$ restricts to an isomorphism of the two-dimensional vector spaces $(T\varphi(\Sigma),\omega|_{T\varphi(\Sigma)})$ and $(JT\varphi(\Sigma),\omega|_{JT\varphi(\Sigma)})$.

There is a homotopy of $JT\varphi(\Sigma)$ through planes that are transverse to $T\varphi(\Sigma)$ that makes the latter plane field Lagrangian by Lemma \ref{lma:linalg1}. 

Once we have found a Lagrangian complement to $T\varphi(\Sigma)$, it is simply a matter of elementary linear algebra to construct the sought extension $F$ of $T\varphi$ to a symplectic bundle map along $\Gamma$. This we formulate as Lemma \ref{lma:linalg2}.

We can finally use the symplectic neighbourhood theorem \cite[Theorem 3.4.10]{McDuff:Introduction} to deform the infinitesimal symplectomorphism $F$ defined along the graph to a symplectomorphism $\Phi$ defined in a neighbourhood of the graph. Under the additional assumption of the existence of a symplectomorphism $\Psi$ with the required properties defined on $D^*U \subset T^*\Sigma$, the symplectomorphism $\Phi$ can moreover be taken to restrict to $\Psi$ in the specified neighbourhood $O \cap T^*V \subset T^*\Sigma.$
\end{proof}

The following two basic facts about linear non-degenerate two-forms on $\R^4$ with respect to subspaces were used in the proof of the above proposition.

\begin{lma}
\label{lma:linalg1}
Let $V,W \subset (\R^4,\omega_0)$ be a pair of transverse two-planes that satisfy the property that the linear map
\begin{gather*}
\eta \colon V \to W^*,\\
v \mapsto \omega_0(v,\cdot)|_{W},
\end{gather*}
is an isomorphism. (This is e.g.~the case when $V$ is totally real and $W=JV$ for a tame almost complex structure $J$.) Given any basis $\langle \mathbf{f}_1,\mathbf{f}_2 \rangle$ of $W$ the linear span
$$W_{\mathbf{f}_1,\mathbf{f}_2} \coloneqq \left\langle \mathbf{f}_1 - \eta^{-1}\left(\omega_0\left(\mathbf{f}_1,\cdot\right)|_W\right),\mathbf{f}_2 \right\rangle
$$
is a Lagrangian two-plane that is transverse to $V$, and which depends continuously on $V$ together with the choice of ordered basis on $W$.
\end{lma}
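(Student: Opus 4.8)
The plan is to verify the three assertions — that $W_{\mathbf{f}_1,\mathbf{f}_2}$ is Lagrangian, that it is transverse to $V$, and that it varies continuously — by a direct linear-algebra computation organised around the single observation that
$$v_1 \coloneqq \mathbf{f}_1 - \eta^{-1}\!\left(\omega_0(\mathbf{f}_1,\cdot)|_W\right)$$
lies in the symplectic orthogonal complement $W^{\omega_0}$ of $W$. Writing $u_1 \coloneqq \eta^{-1}(\omega_0(\mathbf{f}_1,\cdot)|_W) \in V$, the defining identity $\eta(u_1)=\omega_0(\mathbf{f}_1,\cdot)|_W$ is exactly the statement that $\omega_0(u_1,w)=\omega_0(\mathbf{f}_1,w)$ for every $w\in W$, whence $\omega_0(v_1,w)=0$ for all $w\in W$. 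In particular $\omega_0(v_1,\mathbf{f}_2)=0$, and since $\omega_0(v_1,v_1)=\omega_0(\mathbf{f}_2,\mathbf{f}_2)=0$ trivially, the plane $W_{\mathbf{f}_1,\mathbf{f}_2}=\langle v_1,\mathbf{f}_2\rangle$ is isotropic; being an isotropic two-plane in the four-dimensional symplectic space $(\R^4,\omega_0)$, it is automatically Lagrangian, once I have checked that it is genuinely two-dimensional.

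The remaining points I would deduce from the fact that transverse two-planes in $\R^4$ meet trivially, i.e. $V\cap W=\{0\}$. For two-dimensionality: if $v_1=c\,\mathbf{f}_2$ for some scalar $c$, then $\mathbf{f}_1-c\,\mathbf{f}_2=u_1$ would be a nonzero vector of $W$ lying in $V$, contradicting $V\cap W=\{0\}$; hence $v_1$ and $\mathbf{f}_2$ are linearly independent and $\dim W_{\mathbf{f}_1,\mathbf{f}_2}=2$. For transversality with $V$, a dimension count reduces the claim to $W_{\mathbf{f}_1,\mathbf{f}_2}\cap V=\{0\}$: if $a v_1+b\,\mathbf{f}_2\in V$ then, since $u_1\in V$, also $a\,\mathbf{f}_1+b\,\mathbf{f}_2=(a v_1+b\,\mathbf{f}_2)+a u_1\in V$, and as this vector also lies in $W$ it must vanish, forcing $a=b=0$.

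For the continuity statement I would observe that $\eta$ is assembled only from the fixed form $\omega_0$ and the data $(V,W)$ with $W=\langle\mathbf{f}_1,\mathbf{f}_2\rangle$, so relative to a local frame of $V$ it is a matrix depending continuously (in fact smoothly) on $(V,\mathbf{f}_1,\mathbf{f}_2)$ over the open set where it is an isomorphism; matrix inversion being continuous, $u_1$ and hence $v_1$ vary continuously, and therefore so does the point $\langle v_1,\mathbf{f}_2\rangle$ of the Grassmannian $\mathrm{Gr}(2,4)$. I would also record the parenthetical remark: if $V$ is totally real and $W=JV$ for a tame $J$, then $\eta(v)=0$ would give $\omega_0(v,Jv)=0$, which is impossible for $v\neq0$, so $\eta$ is injective and hence an isomorphism between two-dimensional spaces.

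I do not anticipate a genuine obstacle here; the entire argument is elementary, and the only steps requiring a moment's care are the two-dimensionality of $W_{\mathbf{f}_1,\mathbf{f}_2}$ and its transversality with $V$, both of which are handled by the triviality of $V\cap W$ exactly as above.
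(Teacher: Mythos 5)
Your proof is correct and follows essentially the same approach as the paper: the key computation, showing $\omega_0(v_1,\mathbf{f}_2)=0$ via the defining property $\omega_0(\eta^{-1}(w^*),\cdot)|_W = w^*$, is identical. Your version is in fact more complete than the paper's, which leaves the two-dimensionality of $W_{\mathbf{f}_1,\mathbf{f}_2}$ and its transversality to $V$ to the reader; your reduction of both to the triviality of $V\cap W$ is exactly the right way to fill that gap.
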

\begin{proof}

Note that two two-planes in $\R^4$ are transverse precisely when their intersection is zero-dimensional. For the Lagrangian property, it suffices to compute
\begin{eqnarray*}
\lefteqn{\omega_0(\mathbf{f}_1-\eta^{-1}\left(\omega_0\left(\mathbf{f}_1,\cdot\right)|_W\right),\mathbf{f}_2)=}\\
& = \omega_0(\mathbf{f}_1,\mathbf{f}_2)-\omega_0(\eta^{-1}\left(\omega_0\left(\mathbf{f}_1,\cdot\right)|_W\right),\mathbf{f}_2)=\omega_0(\mathbf{f}_1,\mathbf{f}_2)-\omega_0(\mathbf{f}_1,\mathbf{f}_2)=0.
\end{eqnarray*}
Here we have used the fact that $\omega_0(\eta^{-1}(w^*),\cdot)=w^*(\cdot)$ is satisfied on $W$ for any functional $w^* \in W^*$ by construction, applied to the special case $w^*(\cdot)=\omega_0(\mathbf{f}_1,\cdot)|_W$.
\end{proof}

\begin{lma}
\label{lma:linalg2}
Let $V,W \subset (\R^4,\omega_0)$ be two transverse two-planes, where $W$ is moreover is Lagrangian. For any basis $\langle \mathbf{e}_1,\mathbf{e}_2 \rangle=V$ there exists a basis $\langle \mathbf{f}_1,\mathbf{f}_2 \rangle=W$ determined uniquely by
$$ \omega_0(\mathbf{e}_i,\mathbf{f}_j)=\delta_{i,j},$$
where this basis moreover depends continuously on the choice of basis $\mathbf{e}_1,\mathbf{e}_2$, as well as the planes $V$ and $W$ subject to the above conditions.

In particular, if $V_i,W_i \subset (\R^4,\omega_0)$, $i=0,1,$ are two pairs of transverse two-planes for which $W_i$ are Lagrangian, any linear isomorphism $\phi \colon (V_0,\omega_0|_{V_0}) \xrightarrow{\cong} (V_1,\omega_0|_{V_1})$ that preserves the restriction of the symplectic forms extends to a linear symplectomorphism $\Phi$ of $(\R^4,\omega_0)$ uniquely determined by the requirement $\Phi(W_0)=W_1$. Here $\Phi$ depends smoothly on the initial choice of planes and the identification $\phi$.
\end{lma}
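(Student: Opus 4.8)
The strategy is a direct linear-algebra construction, exploiting that a Lagrangian complement $W$ to a symplectic two-plane $V$ in $(\R^4,\omega_0)$ provides, via $\omega_0$, a canonical identification of $W$ with the dual $V^*$. First I would observe that since $V$ is a symplectic subspace and $\R^4$ is four-dimensional, the symplectic orthogonal $V^{\omega_0}$ is also a symplectic two-plane and $\R^4 = V \oplus V^{\omega_0}$; moreover $W \cap V = \{0\}$ implies the pairing map $\psi \colon W \to V^*$, $w \mapsto \omega_0(w,\cdot)|_V$, is injective, hence an isomorphism of two-dimensional spaces. (Injectivity: if $\omega_0(w,v)=0$ for all $v\in V$ then $w \in V^{\omega_0}$; but $W$ being Lagrangian, and $V^{\omega_0}$ symplectic, one checks $W \cap V^{\omega_0}$ is isotropic in the symplectic plane $V^{\omega_0}$, so at most one-dimensional — and together with $W\cap V = 0$ and a dimension count this forces $w$ to vanish.) Given the basis $\langle \mathbf{e}_1,\mathbf{e}_2\rangle$ of $V$, let $\langle \mathbf{e}_1^*,\mathbf{e}_2^*\rangle$ be the dual basis of $V^*$ and set $\mathbf{f}_j \coloneqq \psi^{-1}(\mathbf{e}_j^*)$. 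Then $\omega_0(\mathbf{e}_i,\mathbf{f}_j) = -\omega_0(\mathbf{f}_j,\mathbf{e}_i) = -\mathbf{e}_j^*(\mathbf{e}_i) = -\delta_{i,j}$; replacing $\mathbf{f}_j$ by $-\mathbf{f}_j$ (or adjusting the sign convention in $\psi$) yields $\omega_0(\mathbf{e}_i,\mathbf{f}_j) = \delta_{i,j}$, and uniqueness is immediate since $\psi$ is an isomorphism and the conditions prescribe $\psi(\mathbf{f}_j)$ exactly. Continuity of $(\mathbf{f}_1,\mathbf{f}_2)$ in the data follows because $\psi$ depends continuously (indeed smoothly) on $V$ and $W$, its inverse does too by Cramer's rule on the invertible matrix of $\psi$, and the dual basis depends smoothly on $\langle\mathbf{e}_1,\mathbf{e}_2\rangle$.

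For the second assertion, given the two pairs $(V_i,W_i)$ and the symplectic isomorphism $\phi \colon V_0 \to V_1$, I would first note that $\phi$ is automatically symplectic, hence $\{0\} \neq \omega_0|_{V_0}$ is preserved, so $\phi$ carries any symplectic basis to a symplectic basis. Pick a basis $\langle \mathbf{e}_1,\mathbf{e}_2\rangle$ of $V_0$, let $\langle \mathbf{e}_1',\mathbf{e}_2'\rangle = \langle \phi(\mathbf{e}_1),\phi(\mathbf{e}_2)\rangle$ be its image in $V_1$, and apply the first part of the lemma to produce the associated bases $\langle \mathbf{f}_1,\mathbf{f}_2\rangle$ of $W_0$ and $\langle \mathbf{f}_1',\mathbf{f}_2'\rangle$ of $W_1$, each characterised by $\omega_0(\mathbf{e}_i,\mathbf{f}_j)=\delta_{i,j}$ and $\omega_0(\mathbf{e}_i',\mathbf{f}_j')=\delta_{i,j}$. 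Define $\Phi \colon \R^4 \to \R^4$ to be the unique linear map sending $\mathbf{e}_i \mapsto \mathbf{e}_i'$ and $\mathbf{f}_j \mapsto \mathbf{f}_j'$; this is well-defined and invertible because $\langle \mathbf{e}_1,\mathbf{e}_2,\mathbf{f}_1,\mathbf{f}_2\rangle$ is a basis of $\R^4$ (a vector in $V_0 \cap W_0$ would be zero, and by dimension count the four vectors span), and likewise on the target side. By construction $\Phi|_{V_0} = \phi$ and $\Phi(W_0) = W_1$. That $\Phi$ is a symplectomorphism is then the computation that $\Phi$ preserves $\omega_0$ on all pairs from the basis $\langle \mathbf{e}_1,\mathbf{e}_2,\mathbf{f}_1,\mathbf{f}_2\rangle$: the $\mathbf{e}_i$-$\mathbf{e}_j$ pairings are preserved since $\phi$ is symplectic, the $\mathbf{f}_i$-$\mathbf{f}_j$ pairings vanish on both sides since $W_0,W_1$ are Lagrangian, and the mixed pairings are $\delta_{i,j}$ on both sides by the defining property of the $\mathbf{f}$'s.

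The one point requiring care — and the only genuine subtlety — is to check that $\Phi$ does not depend on the auxiliary choice of basis $\langle\mathbf{e}_1,\mathbf{e}_2\rangle$ of $V_0$, so that it is canonically determined by $(\phi, W_0, W_1)$ and hence depends smoothly on that data. If $\langle\tilde{\mathbf{e}}_1,\tilde{\mathbf{e}}_2\rangle = \langle\mathbf{e}_1,\mathbf{e}_2\rangle M$ is another basis with $M \in \GL(2,\R)$, then the associated dual-basis construction gives $\langle\tilde{\mathbf{f}}_1,\tilde{\mathbf{f}}_2\rangle = \langle\mathbf{f}_1,\mathbf{f}_2\rangle (M^{-1})^{T}$ on both the source and (via $\phi$) the target, since $\psi$ is linear and the dual basis transforms contragradiently; hence the linear map defined by the new bases agrees with $\Phi$ on $V_0$ (it equals $\phi$ there regardless) and on $W_0$ (since the two change-of-basis matrices on $W_0$ and $W_1$ are the same $(M^{-1})^{T}$, compatibly with $\Phi|_{W_0}$), so it is the same map $\Phi$. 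Smooth dependence on $(V_0,W_0,V_1,W_1,\phi)$ then follows from smoothness of the first part of the lemma together with this independence, choosing the auxiliary basis to vary smoothly (e.g.\ locally) with $V_0$. Uniqueness of $\Phi$ subject to $\Phi|_{V_0}=\phi$ and $\Phi(W_0)=W_1$ is clear: $\Phi$ is determined on $V_0$ by $\phi$, and on $W_0$ it must be the unique map sending the basis $\langle\mathbf{f}_1,\mathbf{f}_2\rangle$ (characterised intrinsically by the pairing with $\phi^{-1}(\mathbf{e}_i')=\mathbf{e}_i$) into $W_1$ respecting $\omega_0$, which forces $\mathbf{f}_j \mapsto \mathbf{f}_j'$.
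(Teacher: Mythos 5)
Your overall strategy matches the paper's: use the $\omega_0$-pairing between $V$ and the Lagrangian complement $W$ to produce a dual basis, then extend $\phi$ by sending the $\mathbf{f}$-basis of $W_0$ to the corresponding basis of $W_1$ and check symplecticity on the full basis. The construction of $\Phi$, the check that it is symplectic, the uniqueness argument, and the independence-of-auxiliary-basis verification are all fine and essentially identical to the paper's (the paper in fact does not even bother to spell out the well-definedness check, so yours is slightly more complete there).

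There is, however, a genuine gap at the very first step. You open with ``since $V$ is a symplectic subspace\ldots the symplectic orthogonal $V^{\omega_0}$ is also a symplectic two-plane,'' and you rest the injectivity of $\psi\colon W\to V^*$ on the symplectic splitting $\R^4=V\oplus V^{\omega_0}$ and on $V^{\omega_0}$ being symplectic. But the lemma does not assume $V$ symplectic, and it must not: in the application (Proposition~\ref{prp:stdform}), $V$ is the tangent plane to a totally real surface, which can perfectly well be Lagrangian (e.g.~$\Re\,\C^2\subset\C^2$). For $V$ Lagrangian one has $V^{\omega_0}=V$, your premise fails, and the ``dimension count'' you invoke after observing $W\cap V^{\omega_0}$ is isotropic does not close on its own either (isotropy only gives $\dim(W\cap V^{\omega_0})\le 1$, and $W\cap V=0$ alone does not push this to $0$ without further use of the Lagrangian condition and non-degeneracy). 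The conclusion you need is true without any hypothesis on $V$, and the clean way to see it is to pair in the other direction, as the paper does: $\eta\colon V\to W^*$, $v\mapsto\omega_0(v,\cdot)|_W$, has kernel $V\cap W^{\omega_0}=V\cap W=\{0\}$ using only that $W$ is Lagrangian and that $V,W$ are transverse. (Equivalently, for your $\psi$: if $w\in W$ kills $V$, then since $W$ is Lagrangian $w$ also kills $W$, hence kills $V+W=\R^4$, so $w=0$ by non-degeneracy.) The same unjustified assumption resurfaces in your remark ``$\{0\}\neq\omega_0|_{V_0}$ is preserved,'' though that aside is not actually used in your argument.
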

\begin{proof}
The linear map $\eta \colon V \to W^*$ given by $v \mapsto \omega_0(v,\cdot)|_W$ is an isomorphism by the non-degeneracy of $\omega_0$ and the assumption that $W$ is a Lagrangian plane. Any basis $\langle \mathbf{e}_1,\mathbf{e}_2\rangle$ of $V$ thus induces a basis $\langle \eta(\mathbf{e}_1),\eta(\mathbf{e}_2)\rangle$ of $W^*$. The sought basis of $W$ is the corresponding uniquely defined dual basis.

Any symplectomorphism $\Phi$ extending $\phi$ and satisfying the assumption $\Phi(W_0)=W_1$ is uniquely determined as follows. The basis $\langle \mathbf{f}_1,\mathbf{f}_2\rangle$ of $W_0$ that is uniquely determined by the basis $\langle \mathbf{e}_1,\mathbf{e}_2\rangle=V_0$ via $\omega_0(\mathbf{e}_i,\mathbf{f}_i)=\delta_{i,j}$ must be mapped under $\Phi$ to the corresponding basis $\langle \Phi(\mathbf{f}_1),\Phi(\mathbf{f}_2)\rangle=W_1$ that is uniquely determined by the basis $\langle \phi(\mathbf{e}_1),\phi(\mathbf{e}_2) \rangle=V_1$ via $\omega_0(\Phi(\mathbf{f}_i),\phi(\mathbf{e}_i))=\delta_{i,j}$.

To see that there exists such an extension $\Phi$ in the first place, we argue as follows. Start by choosing any basis of $V_0$, which by the above induces a basis of $V_1$ via $\phi$, and then subsequently induces bases of $W_i$ for $i=0,1,$ as above. We then require $\Phi$ to send the induced basis of $W_0$ to the induced basis of $W_1$. Since $W_i$ are Lagrangian, this extension is clearly is a symplectomorphism of $(\R^4,\omega_0)$.
\end{proof}

\subsection{Preparatory modifications of the concordance}

Here we assume that
$$\Sigma \subset ([T_-,T_+ ]_t \times Y,d(e^t\alpha),J)$$
is a totally real concordance between two transverse knots $K_\pm \subset \{T_\pm\} \times Y$, for a compatible almost complex structure $J$ which is cylindrical near the boundary $\{T_\pm\} \times Y$. We further assume that $\partial_t$ is tangent to the concordance $\Sigma$ near the boundary.

We begin by showing how to approximate a knot that is nowhere tangent to the Reeb vector field by a Legendrian representative in the same smooth isotopy class by adding sufficiently many stabilisations. This is a result that was previously proven by Etnyre in \cite[Theorem 2.5]{Etnyre:Legendrian}. In addition we need to control how the framing is changed in this process. A systematic study of Legendrian approximation while staying transverse to a fixed vector field was developed by Cahn and Chernov in \cite{Cahn}. We here give a self-contained account of the approximation result.
\begin{prp}[\cite{Etnyre:Legendrian},\cite{Cahn}]
\label{prp:nowherereeb}
Any smooth knot $K \subset (Y^3,\alpha)$ that is nowhere tangent to the Reeb vector field can be $C^0$-perturbed to a knot $K'$ which is Legendrian in some small arc, where $K$ and $K'$ moreover are isotopic through knots that are nowhere tangent to the Reeb vector field. Furthermore, the entire knot $K$ is smoothly isotopic to a Legendrian knot $\Lambda$ where
\begin{enumerate}
\item the Legendrian $\Lambda$ is smoothly isotopic to the result of $k_+ \gg 0$ positive and $k_- \gg 0$ negative stabilisations of the Legendrian arc in $K'$, by an isotopy through knots that are nowhere tangent to the Reeb vector field;
\item if both $k_+$ and $k_- \gg 0$ are taken sufficiently large, the entire isotopy from $K$ to $\Lambda$ may be assumed to be through knots that all are of arbitrarily small $C^0$-distance from $K$ for suitable choices of parametrisations.
\end{enumerate}
\end{prp}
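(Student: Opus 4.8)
The plan is to work locally in a contact Darboux chart $(J^1\R, dz - p\,dx)$ along the knot $K$ and to build a Legendrian approximation by a careful front-projection argument, keeping track of the framing via the Thurston--Bennequin invariant. First I would exploit the hypothesis that $K$ is nowhere tangent to the Reeb vector field $R_\alpha = \partial_z$ to reparametrise $K$ so that, in local coordinates, $K$ is everywhere transverse to $\partial_z$; this means that the projection of $K$ to the $(x,p)$-plane (the Lagrangian projection, or, after a rotation, the front projection) is an immersion on each small arc. On a small arc one may then straighten $K$ to an actual Legendrian arc $K'$ by a $C^0$-small perturbation supported near that arc, choosing the perturbation so that the tangent line to $K$ stays transverse to $\partial_z$ throughout; this is the local part of the claim and gives the ``Legendrian in some small arc'' conclusion together with the isotopy through knots transverse to the Reeb field.

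The heart of the argument is the global statement (1)--(2). For this I would subdivide $K$ into finitely many small arcs, each contained in a Darboux chart, and on each non-Legendrian arc approximate the knot by a Legendrian front consisting of many small zig-zags. Concretely, a curve that is nowhere tangent to $\partial_z$ can be $C^0$-approximated by a Legendrian whose front is a ``staircase'' following the curve: the required slope $dz/dx = p$ is achieved by wiggling, introducing cusps; each up-down wiggle is a stabilisation, and the sign of the stabilisation is dictated by the orientation at which the cusp is traversed, i.e.\ by whether the curve is locally increasing or decreasing in $z$ relative to $x$. Since $K$ is nowhere Reeb-tangent, both ``increasing'' and ``decreasing'' behaviour can be arranged by a preliminary $C^0$-small isotopy (pushing $K$ back and forth in the $z$-direction without ever becoming Reeb-tangent), so that the resulting Legendrian $\Lambda$ is obtained from the locally-Legendrian model $K'$ by $k_+$ positive and $k_-$ negative stabilisations with $k_\pm$ as large as we like; this is exactly statement (1). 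Statement (2) then follows because every intermediate stage of this construction — the subdivision, the wiggling, the insertion of cusps — is a $C^0$-small modification, and by making the zig-zags small enough (which forces $k_\pm$ large) the whole isotopy from $K$ to $\Lambda$ stays within any prescribed $C^0$-neighbourhood of $K$, for an appropriate choice of parametrisations.

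The main obstacle I anticipate is the bookkeeping needed to guarantee that the stabilisations produced are genuinely of \emph{both} signs and in the stated numbers $k_\pm$, rather than an uncontrolled mix forced by the geometry of $K$ — in particular, ensuring that one can always trade a positive stabilisation for a negative one (up to smooth isotopy through Reeb-transverse knots) so as to balance the count, and that adding extra canceling pairs of zig-zags does not destroy the $C^0$-control. This is where the nowhere-Reeb-tangent hypothesis is used most essentially: it is precisely the condition that lets one locally tilt the knot in the $\pm z$ direction to create a cusp of either sign. The remaining points — that the front-staircase construction yields a $C^0$-approximation, that cusps can be inserted without introducing Reeb tangencies, and that the framing changes in the expected way — are routine once the local picture is set up, and I would only sketch them, referring to \cite[Theorem 2.5]{Etnyre:Legendrian} and \cite{Cahn} for the details already in the literature.
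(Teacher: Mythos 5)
Your overall strategy---approximate the knot by a Legendrian whose front/Lagrangian projection follows the original curve via many small zig-zags, with the sign of each stabilisation dictated by the local picture, and gain $C^0$-control by making the zig-zags small while $k_\pm$ grows---is the same strategy the paper uses. The structural difference is where the approximation lives. You propose covering $K$ by finitely many Darboux charts and building the Legendrian arc-by-arc; the paper instead builds a \emph{single} global tubular neighbourhood of $K$, namely a strict contactomorphism from a neighbourhood of $K$ in $(Y,\alpha)$ onto a set of the form $\{|z|\le\epsilon,\ |p-f(\theta)|\le\epsilon\}\subset(J^1S^1,\ dz-p\,d\theta)$ taking the Reeb field to $\partial_z$ and taking $K$ to the section $\{z=0,\ p=-f(\theta)\}$. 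This is the key lemma in the paper's proof: once $K$ is a section of $J^1S^1\to S^1$ with Reeb $=\partial_z$, the entire approximation is a single front-projection argument over $S^1$, the ``nowhere tangent to Reeb'' condition becomes simply ``transverse to $\partial_z$'' in one chart, and the $C^0$-smallness of the isotopy is immediate. Your chart-by-chart version avoids this construction but in exchange must glue the Legendrian approximations across the chart overlaps; you do not address how the zig-zag patterns on adjacent arcs are made to match smoothly, nor how the ambient $C^0$-bound is maintained uniformly across the transitions. That gluing step is exactly the bookkeeping the global neighbourhood is designed to eliminate, and as written it is a soft spot in your argument rather than a routine detail.

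One smaller point: you describe the projection of $K$ to the $(x,p)$-plane as ``the Lagrangian projection, or, after a rotation, the front projection.'' These are genuinely different projections, not rotations of one another: nowhere-tangency to $\partial_z$ is equivalent to the Lagrangian projection $(x,p)$ being an immersion, whereas the front projection $(x,z)$ of a Legendrian has cusps and is never globally an immersion. The paper's construction keeps the two separate: the $p$-coordinate is controlled by the slopes of a zig-zagged front in the $(\theta,z)$-plane, and the stabilisation signs are then read off from the sign of $p$ on the original section, with extra cancelling pairs added to push $k_\pm$ as high as desired.
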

\begin{rmk}
\label{rmk:tb}

If we endow an arbitrary knot $K$ that is nowhere tangent to the Reeb vector field with the framing induced by the same vector field, then there is typically no framed isotopy to a Legendrian knot endowed with the Reeb framing. The reason is that the self-linking number of Legendrian knots defined using the Reeb framing satisfies the so-called Thurston--Bennequin inequality, as proven by Eliashberg \cite{Contact3Man}. This puts strong restrictions on the allowed Reeb-framings of a Legendrian knot.
\end{rmk}
\begin{proof}
We start by constructing a standard neighbourhood of $\varphi(K)$. Let $\varphi \colon S^1_\theta \hookrightarrow Y$ be a parametrisation of $K$. Writing $f(\theta)d\theta =\varphi^*\alpha$ we consider the section
$$ \Gamma \coloneqq \left\{z=0,p=-f(\theta)\right\} \subset J^1S^1.$$
Since this section has a contact-form preserving embedding into $(Y,\alpha)$ with image $K$, we can use the Reeb flow to extend the embedding of this graph to a contact-form preserving embedding of the annulus
$$ A \coloneqq \left\{z \in [-\epsilon,\epsilon],p=-f(\theta)\right\} \subset \left(J^1S^1,dz-p\,d\theta\right).$$
into $(Y,\alpha)$ which sends $\Gamma \subset A$ to $K$. To that end, recall that the Reeb vector field in $J^1S^1$ is given by $\partial_z$ and, thus, it is tangent to the above annulus $A$.

Then we can further extend this map to a contactomorphism of a neighbourhood
$$ \left\{ |z| \le \epsilon, |p-f(\theta)|\le \epsilon \right\} \subset \left(J^1S^1,dz-p\,d\theta\right) $$
into $(Y,\alpha)$ which is a strict along the annulus $A$. This construction is standard, and can be carried out as the proof of \cite[Theorem 4.1]{Dimitroglou:C0limits}. Namely, one extends the embedding to a smooth embedding of the neighbourhood that is a contactomorphism along the annulus $A$. Then one uses Moser's trick to deform this smooth embedding to a contactomorphism in a neighbourhood of the annulus.

For $\epsilon>0$ sufficiently small, any smooth isotopy of knots whose tangents are of some uniform non-zero distance away from the subspace spanned by $\partial_z$ may be assumed to be nowhere tangent to the Reeb vector field of $(Y,\alpha)$. Indeed, along the annulus $A$ the Reeb vector field of $\alpha$ is identified with $\partial_z$.

The initial isotopy from $K$ to $K'$ which makes $K$ Legendrian in an arc can be carried out by a $C^0$-small deformation of the graph $\Gamma$ of the function $f \colon S^1 \to \R$ in the above coordinates to a graph of a function that is constant near some $\theta_0 \in S^1$.

The construction of the Legendrian $\Lambda$ is then carried out as follows. We can $C^0$-approximate $\Gamma$ by a Legendrian $\Lambda_k \subset J^1S^1$ that is obtained from $j^10$ by adding $k_+$ positive and $k_-$ negative stabilisations, where $k_\pm \gg 0$. Namely, we can consider a knot with front projection in $S^1 \times [-\epsilon,\epsilon]_z$ where the $p$-coordinate is controlled by adding sufficiently many small zig-zags of suitable slopes to the front; see \cite[Figure 7]{Etnyre:Legendrian} as well as Figure \ref{fig:coil} below. Also c.f.~the construction in Section \ref{sec:constructing}.

Note that we need to use positive stabilisations in the region of the knot where $p>0$ and negative stabilisations in the region where $p<0$. We can always add additional small stabilisations of a suitable sign in order to ensure that the number of negative and positive stabilisations become some arbitrarily large numbers $k_\pm \gg 0$.

The existence of a smooth isotopy between the original knot and its Legendrian approximation is obvious, since the stabilisations correspond to Reidemeister-I moves with respect to the Lagrangian projection $J^1S^1 \to T^*S^1$ as knot projection. The difference in winding number can be computed explicitly in the Lagrangian projection of the knot, for which the black-board framing is induced by the Reeb vector field; see Figure \ref{fig:winding}. Both (1) and (2) are now obvious by construction.
\end{proof}

\begin{figure}[htp]
 \vspace{3mm}
 \labellist
 \pinlabel $z$ at 4 94
 \pinlabel $x$ at 99 19
 \pinlabel $x$ at 99 57
 \pinlabel $y$ at 4 42
	\endlabellist
 \includegraphics[scale=2]{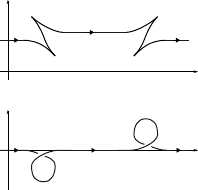}
 \caption{Top: the front projection of a negative and positive stabilisation, shown on the left and right, respectively. Bottom: the corresponding Lagrangian projection. The stabilisation of either sign contributes with the self-linking number $-1$ for the framing given by the Reeb vector field (i.e.~the black board framing with respect to the Lagrangian projection), as can be computed e.g.~using the lower knot diagram.}
 \label{fig:winding}
\end{figure}

\begin{lma}
Let $\Sigma$ be a totally real concordance as above. After extending the cylindrical part by adjoining a trivial cylinder $[T_+,T_+'] \times K_+$ with $T_+' \gg 0$ sufficiently large, we can produce a new totally real concordance $\Sigma' \subset [T_-,T_+']\times Y$ that is Lagrangian near its boundary, which is related to $\Sigma$ by a smooth isotopy of $([T_-,T'_+] \times Y,\{T_-,T'_+\} \times Y)$.
\end{lma}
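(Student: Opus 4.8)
The plan is to use Proposition~\ref{prp:nowherereeb} to replace the transverse boundary knots $K_\pm$ by Legendrian approximations, and to implement this replacement by a suspension construction supported in collar neighbourhoods of the two boundary components. First I would apply Proposition~\ref{prp:nowherereeb} to $K_\pm$: since these knots are nowhere tangent to $R_\alpha$, there are Legendrian knots $\Lambda_\pm$, smoothly isotopic to $K_\pm$, together with smooth isotopies $\{K_s^\pm\}_{s \in [0,1]}$ from $K_0^\pm = K_\pm$ to $K_1^\pm = \Lambda_\pm$ through knots that remain nowhere tangent to $R_\alpha$. By the isotopy extension theorem each of these is generated by an ambient isotopy $\psi_s^\pm$ of $Y$ with $\psi_0^\pm = \mathrm{id}$ and $\psi_s^\pm(K_\pm) = K_s^\pm$.

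Second, I would extend $\Sigma$ by adjoining the trivial cylinder $[T_+,T_+'] \times K_+$ for $T_+' \gg T_+$; write $\widetilde{\Sigma} \subset [T_-,T_+'] \times Y$ for the result. On a collar $[T_-,T_-+\delta] \times Y$ of the concave boundary and on a collar $[T_+'-\delta',T_+'] \times Y$ of the convex boundary, $\widetilde{\Sigma}$ is the trivial cylinder over $K_-$, respectively $K_+$, and $J$ is cylindrical; here $\delta$ is whatever the hypotheses provide, while $\delta'$ may be taken as large as we wish since $T_+'$ is large. Inside the convex collar I would replace the piece $[T_+'-\delta',T_+'] \times K_+$ of $\widetilde{\Sigma}$ by the surface $\{(t,\psi^+_{\rho_+(t)}(p)) : t \in [T_+'-\delta',T_+'],\ p \in K_+\}$, where $\rho_+ \colon [T_+'-\delta',T_+'] \to [0,1]$ is smooth, equal to $0$ near $T_+'-\delta'$ and to $1$ near $T_+'$. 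At points where $\rho_+$ is locally constant this surface is a trivial cylinder over the nowhere-Reeb-tangent knot $\psi^+_{\rho_+(t)}(K_+)$, hence totally real for the cylindrical $J$; and since being totally real is an open condition, it stays totally real once $\sup|\rho_+'|$ is small enough, which is arranged by taking $\delta'$ (equivalently $T_+'$) large. Near $t = T_+'$ the modified surface equals $\{t\} \times \Lambda_+$, i.e.\ it is Lagrangian.

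Third I would carry out the analogous modification near the concave boundary, replacing $[T_-,T_-+\delta] \times K_-$ by $\{(t,\psi^-_{\rho_-(t)}(p)) : t \in [T_-,T_-+\delta],\ p \in K_-\}$ with $\rho_- \equiv 1$ near $T_-$ and $\rho_- \equiv 0$ near $T_-+\delta$, so that near $t = T_-$ the surface is $\{t\} \times \Lambda_-$. The difference is that $\delta$ is fixed, so $\sup|\rho_-'|$ cannot be made small and openness alone does not apply; instead I would compute the tangent plane of the suspension directly in the cylindrical coordinates and check, using that $K_s^-$ is nowhere tangent to $R_\alpha$, that it fails to be totally real only along a set cut out by a single transversality condition on the isotopy velocity, so that a generic choice of the isotopy $\{K_s^-\}$ and of $\rho_-$ makes it totally real throughout, or else one removes the remaining locus by a $C^0$-small modification using the $h$-principle for totally real surfaces with boundary conditions. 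Call the resulting surface $\Sigma'$; it is totally real, equals $\R \times \Lambda_\pm$ near the two ends (hence Lagrangian there), is tangent to $\partial_t$ near the boundary, and agrees with $\widetilde{\Sigma}$ away from the two collars. Finally $\Sigma'$ is obtained from $\widetilde{\Sigma}$ by a smooth isotopy of the pair $([T_-,T_+'] \times Y,\{T_-,T_+'\} \times Y)$: in each collar the suspension modification is the time-one map of the $t$-dependent ambient isotopy $(t,y) \mapsto (t,\psi^\pm_{u\rho_\pm(t)}(y))$, $u \in [0,1]$, extended by the identity over the rest of $[T_-,T_+'] \times Y$; this preserves $\{T_-,T_+'\} \times Y$ setwise (moving $K_-$ to $\Lambda_-$ inside $\{T_-\} \times Y$ and $K_+$ to $\Lambda_+$ inside $\{T_+'\} \times Y$), and composing with the $C^0$-small isotopy possibly needed at the concave end yields the required isotopy.

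The main obstacle is precisely the concave end: there the cylindrical collar has a fixed, possibly small, length, so the Legendrian-approximating suspension need not be $C^1$-close to a trivial cylinder and its total-reality cannot be obtained by a soft perturbation — this is the one point where the genuine flexibility of totally real surfaces (a direct general-position argument inside the suspension, or the relative $h$-principle) has to be invoked rather than just the openness of the totally real condition.
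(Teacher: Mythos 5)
Your plan at the convex end — extend by a long trivial cylinder and do a ``slow'' suspension there, using that the totally real condition is open — is correct and in the same spirit as the paper's slow trace cobordisms. But the overall route diverges from the paper's, and the place you flag as the obstacle (the concave end) is a genuine gap, with neither of your proposed patches carried far enough to close it.

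On the concave end itself: the genericity argument as stated is not convincing. The non--totally-real locus in $Gr_2(\R^4)$ is the set of complex lines, a copy of $\CP^1$ of real codimension two; the Gauss map of the suspension is a map from a $2$-dimensional annulus, so a generic perturbation meets this locus in a finite, a priori non-empty set rather than avoiding it. To run a transversality argument one must show the relevant relative intersection/obstruction class in $\pi_2\left(Gr_2(\R^4),Gr_{tr}\right)\cong\Z$ vanishes; you neither identify this class nor compute it (one can do so by exhibiting a formal solution, e.g.\ the slowed-down suspension with the $t$-axis reparametrised, but that step is missing). The $h$-principle variant is sounder, but you do not set up the formal solution or address the $C^0$-control that the downstream Theorem \ref{thm:main} requires; the Lemma's isotopy feeds into a statement that is explicitly a \emph{$C^0$-small} isotopy, so an unquantified application of the $h$-principle is not enough.

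More importantly, the paper's proof is structured quite differently and sidesteps this difficulty. It first performs a $C^0$-small perturbation making the boundary knots Legendrian only on small subarcs; then, using the $C^0$-flexibility of the section picture from Proposition \ref{prp:stdform}, it makes the concordance Lagrangian in a neighbourhood of an embedded path $\gamma$ joining those two Legendrian subarcs; it then replaces the Lagrangian strip over $\gamma$ by the cylinder over a stabilised Legendrian arc (a Lagrangian surgery inside the concordance), which inserts the same number of positive and negative stabilisations at both boundary components simultaneously; only after that does it adjoin slow traces from the stabilised boundary knots to the full Legendrian approximations at both ends, using the extra room created by the adjoined trivial cylinder. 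That surgery-along-$\gamma$ step is absent from your plan, and it is exactly what lets the paper avoid a fast suspension in the fixed-size concave collar, keeps the whole deformation $C^0$-small, and produces the balanced stabilisation count used in the second bullet of Theorem \ref{thm:main}. You should either supply a rigorous formal-obstruction/$h$-principle argument with $C^0$-control at the concave end, or adopt the path-surgery mechanism.
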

\begin{proof}

Since the concordance $\Sigma \subset [T_-,T_+] \times Y$ is totally real and tangent to $\partial_t$ near its boundary $\partial \Sigma=K_+ \cup K_-$, the knots $K_\pm \subset Y$ are nowhere tangent to the Reeb vector field.

By Proposition \ref{prp:nowherereeb}, we may find smooth $C^0$-perturbations $K'_\pm$ of $K_\pm$ that are Legendrian in some non-empty sub-arcs. One can readily extend this perturbation to a smooth isotopy that takes $\Sigma$ to $\Sigma'$ through totally real concordances, where the latter thus is a concordance from $K'_-$ to $K'_+$.

Further, Proposition \ref{prp:nowherereeb} implies that there exist Legendrian knots $\Lambda_\pm \subset (Y,\alpha)$ that are isotopic to the result $K''_\pm$ of $k_+$ positive and $k_-$ negative stabilisations of the Legendrian arcs in $K'_\pm$, where the isotopy moreover can be taken through knots that are nowhere tangent to the Reeb vector-field. (Note that we use the same number of negative and positive stabilisations on both knots $K'_\pm$.)

We claim that there is a smooth isotopy of $([T_-,T_+] \times Y,\{T_-,T_+\} \times Y)$ that takes $\Sigma'$ to a totally real concordance with boundary $K''_\pm$ tangent to $\partial_t$ near the boundary. To see this, we can consider an embedded path $(\gamma,\partial \gamma) \subset (\Sigma',\partial \Sigma')$ whose two boundary components are contained in either Legendrian subset of $K'_\pm$. We then perturb $\Sigma'$ by a smooth isotopy through totally real concordances relative boundary in order to make it Lagrangian in some small neighbourhood of the entire path $\gamma$; note that $\Sigma'$ already is Lagrangian near $\partial \gamma$, since the cylinder over a Legendrian arc is Lagrangian. To construct the smooth isotopy, we use Proposition \ref{prp:section} to represent $\Sigma'$ by a section $\Gamma_{\varphi^*\eta} \subset T^*\Sigma$. This section can be $C^0$-approximated by the section of a one-form that is closed in an sufficiently small neighbourhood of $\gamma$, and which coincides with the original section near $\partial \gamma$.

Once $\Sigma'$ has been made Lagrangian near $\gamma$, we can use the symplectic standard neighbourhood theorem to symplectically identify a neighbourhood of $\gamma$ with the zero section in $T^*(\gamma \times [-\epsilon,\epsilon])$. We then replace the zero-section $0_{\gamma \times [-\epsilon,\epsilon]}$ with the cylinder over an unknotted Legendrian arc with $k_+$ positive and $k_-$ negative stabilisations. (Here we need to use the symplectomorphism $\Psi$ described below in order to import the Lagrangian cylinder in the symplectisation into the cotangent bundle.) 
 
Since there is a smooth isotopy $K^t_\pm$ from $K^0_\pm=K''_\pm$ to $K^1_\pm=\Lambda_\pm$ of knots that are nowhere tangent to the Reeb vector field, we can construct the sought concordance by concatenating $\Sigma'$ with the trace of the isotopies $$\left\{(t,x); \:\: x \in K^{\lambda^{-1}t}_\pm \right\} \subset [0,\lambda^{-1}] \times Y$$
at both boundary components of $\Sigma'$. These traces are totally real concordances if we take $\lambda >0$ to be sufficiently small. In addition, they are clearly isotopic to the trivial trace cobordism through totally real concordances.

Finally, note that we must take $T'_+-T_+ \ge 2\lambda^{-1}\gg 0$ in order to have sufficient space to add these traces, which is the reason why it is necessary to extend the concordance by a trivial cylinder.

\end{proof}

By the above it is possible in the following to restrict attention to the case when the totally real concordance $\Sigma \subset ([T_-,T_+]\times Y,d(e^t\alpha))$ is Lagrangian near its boundary. In this case the standard form of the neighbourhood provided by Proposition \ref{prp:stdform} takes the following more specific form. First, we need to make use of the symplectomorphism
\begin{gather*}
\Psi \colon (T^*(\R \times S^1),-d(p_tdt+p_\theta d\theta)) \to (\R_t \times J^1S^1,d(e^t(dz-p\, d\theta)))\\
(t,\theta,p,z)=(t,\theta,e^{-t} p_\theta,e^{-t} p_t)=\Psi(t,\theta,p_t,p_\theta)
\end{gather*}
under which the Liouville vector-field $\partial_t$ in the symplectisation (i.e.~the codomain) corresponds to $\partial_t+p_t\partial_{p_t}+p_\theta\partial_{p_\theta}$ in the cotangent bundle (i.e.~the domain).

\begin{prp}
\label{prp:section}
Assume that $\Sigma \subset ([T_-,T_+] \times Y,d(e^t\alpha))$ is a totally real concordance which is Lagrangian near its boundary. There exists an open symplectic embedding
$$\Phi \colon (U,d(e^t\alpha)) \hookrightarrow (T^*([T_-,T_+] \times S^1),-d(p_t\,dt+p_\theta\,d\theta))$$
where $U$ is a neighbourhood of $\Sigma \subset ([T_-,T_+]\times Y,d(e^t\alpha))$ such that $\Phi(\Sigma)=\Gamma \subset T^*([T_-,T_+] \times S^1)$ is a section which coincides with the zero section near the boundary.

Furthermore, we may assume that the map $\Psi \circ \Phi$ is cylindrical near the boundary, in the sense that it preserves the Liouville flow there. In particular, it follows that
\begin{gather*}
\Psi \circ \Phi \colon U \hookrightarrow \R \times J^1S^1,\\
(t,x) \mapsto (t,\phi_\pm(x))
\end{gather*}
holds near the boundary, where $\phi_\pm$ thus are contactomorphisms.
\end{prp}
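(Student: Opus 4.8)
The plan is to specialise the general standard neighbourhood theorem (Proposition \ref{prp:stdform}) to the concordance $\Sigma \subset ([T_-,T_+]\times Y, d(e^t\alpha))$ and then to upgrade the resulting symplectomorphism so that it respects the Liouville structure near the two ends. Applying Proposition \ref{prp:stdform} with $X = [T_-,T_+]\times Y$, $\eta = e^t\alpha$, and $\varphi$ the given totally real embedding of $\Sigma \cong [T_-,T_+]\times S^1$, we obtain a symplectic embedding of a neighbourhood $O \subset T^*\Sigma$ of the graph $\Gamma_{\varphi^*\eta}$ onto a neighbourhood of $\Sigma$, intertwining $d\lambda$ with $d(e^t\alpha)$. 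By hypothesis $\Sigma$ is Lagrangian near the boundary, so $\varphi^*\eta$ vanishes there; hence the graph coincides with the zero section near $\partial\Sigma$, and the inverse $\Phi$ of this embedding already has the property that $\Phi(\Sigma) = \Gamma$ is a section of $T^*([T_-,T_+]\times S^1)$ equal to the zero section near the boundary. This gives the first paragraph of the statement essentially for free.

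The substantive point is the ``furthermore'': arranging that $\Psi \circ \Phi$ is cylindrical (Liouville-preserving) near $\{t = T_\pm\}$. Here I would use the ``relative'' clause of Proposition \ref{prp:stdform}: on a collar of the boundary, $\Sigma$ is a cylinder $[T_\pm, T_\pm \pm \delta] \times \Lambda_\pm$ over a Legendrian $\Lambda_\pm$ (since $\partial_t$ is tangent to $\Sigma$ and $\Sigma$ is Lagrangian there, $K_\pm$ is Legendrian). A standard Weinstein neighbourhood of a Legendrian $\Lambda_\pm \subset (Y,\alpha)$ gives a strict contactomorphism from a neighbourhood in $J^1 S^1$ onto a neighbourhood of $\Lambda_\pm$ in $Y$ sending $j^1 0$ to $\Lambda_\pm$; crossing with $\R_t$ and using that the contactomorphism is strict, this extends to a cylindrical symplectomorphism $\Psi_0$ from a neighbourhood of $\R \times j^10$ in $(\R \times J^1 S^1, d(e^t(dz - p\,d\theta)))$ onto a neighbourhood of the collar of $\Sigma$, sending $\R \times j^10$ to the collar and preserving the Liouville vector field $\partial_t$. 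Composing with the chart $\Psi$ of the preamble, $\Psi^{-1}\circ\Psi_0$ is a symplectomorphism of neighbourhoods of the zero section in $T^*([T_-,T_+]\times S^1)$ that is defined near the boundary, sends zero section to zero section, and — because $\Psi$ conjugates the symplectisation Liouville field to $\partial_t + p_t\partial_{p_t} + p_\theta\partial_{p_\theta}$ — intertwines the Liouville structures there. Feeding $\Psi^{-1}\circ\Psi_0$ into the relative part of Proposition \ref{prp:stdform} as the prescribed local symplectic identification $\Psi$ (in the notation of that proposition, with $U$ a collar of $\partial\Sigma$ and $V$ a slightly smaller collar) produces a global $\Phi$ agreeing with $(\Psi^{-1}\circ\Psi_0)^{-1}$ near the boundary; by construction $\Psi\circ\Phi$ then equals $\Psi_0^{-1}$ near $\partial\Sigma$, hence is cylindrical there, and the stated form $(t,x)\mapsto(t,\phi_\pm(x))$ with $\phi_\pm$ a contactomorphism follows from invariance under the Liouville flow $(t,\theta,p_t,p_\theta)\mapsto(t+s,\theta,e^s p_t, e^s p_\theta)$ on the cotangent side together with the explicit formula for $\Psi$.

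The main obstacle is the bookkeeping in the previous step: one must check that the locally prescribed symplectomorphism $\Psi^{-1}\circ\Psi_0$ genuinely satisfies the hypotheses required by the relative clause of Proposition \ref{prp:stdform} (it is defined on a neighbourhood of the zero section over a collar, restricts to $\varphi$ on the zero section, and is a symplectomorphism), and that the extension $\Phi$ can then be taken to preserve, in the overlap region, the property of being a section — this is automatic because $\Phi$ is forced to equal the prescribed map there, and the prescribed map sends the collar of $\Sigma$ to the zero section. A minor additional point is that the Weinstein neighbourhood of $\Lambda_\pm$ must be chosen compatibly with the cylindrical complex structure only up to the extent already encoded in $J$ being cylindrical near the boundary; since Proposition \ref{prp:stdform} only uses $J$ to produce the initial Lagrangian complement and we are overriding the construction near $\partial\Sigma$ with the explicit $\Psi_0$, no further compatibility is needed. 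I expect the whole argument to be short once this splicing of the two neighbourhood theorems is set up carefully.
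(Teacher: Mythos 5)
Your argument matches the paper's proof: both first restrict to the collar, where $\Sigma$ is a cylinder over the Legendrian $K_\pm$, use the Legendrian standard (strict-contact) neighbourhood theorem to identify the collar with the cylinder over $j^10 \subset J^1 K_\pm$, transport this via $\Psi^{-1}$ to a Weinstein neighbourhood of the zero section in the cotangent bundle, and then invoke the relative clause of Proposition~\ref{prp:stdform} with that prescribed identification near $\partial\Sigma$. The only blemishes are cosmetic: the composite you feed into Proposition~\ref{prp:stdform} should be $\Psi_0\circ\Psi$ (from $T^*([T_-,T_+]\times S^1)$ into $[T_-,T_+]\times Y$) rather than $\Psi^{-1}\circ\Psi_0$ as written, and the opening application of the non-relative Proposition~\ref{prp:stdform} is superfluous since the relative version already yields both conclusions at once.
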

\begin{proof}
Note that $\Sigma$ is a cylinder over a Legendrian near its boundary. The Legendrian standard neighbourhood theorem \cite{Geiges:Intro} implies that we can find a standard neighbourhood of the form $[-\epsilon +T_+,T_+] \times J^1K_+$ and $[T_-,T_-+\epsilon] \times J^1K_-$ that identifies the collar of $\Sigma$ near the boundary component $K_\pm$ with the cylinder over $j^10 \subset J^1K_\pm$. Using the inverse $\Psi^{-1}$ of the symplectomorphism defined above we obtain a Weinstein neighbourhood of the above Lagrangian cylinder.

Finally, we apply Proposition \ref{prp:stdform} with the aforementioned identification of the Weinstein neighbourhood of the Lagrangian collar of the boundary of $\Sigma$ defined by $\Psi^{-1}$ near $\partial \Sigma$. This gives the sought symplectomorphism.
\end{proof}

Since the identification $\Phi$ is cylindrical near the boundary, one can extend the neighbourhood $U$ to a neighbourhood $\hat{U} \supset U$ that contains the cylindrical ends $(-\infty,T_-] \times K_-$ and $[T_+,+\infty) \times K_+$, and such that $\Phi$ extends to a cylindrical map defined over the entire non-compact cylindrical ends.

\begin{lma}
\label{lma:pq=0}
After, first, extending $\Sigma$ by adding a cylindrical end $[T_+,T_+'] \times K_+$ for $T_+' \gg T_+$ sufficiently large and, then, performing a $C^2$-small perturbation of $\Sigma$ through totally real surfaces supported in $(T_+,T_+')\times Y$, we may assume the following. The symplectic embedding $\Phi$ provided by Proposition \ref{prp:section} satisfies
$$ \Phi(\Sigma) \subset \{p_t =0\} \subset T^*([T_-,T_{+}'] \times S^1)$$
in addition to the properties specified in that proposition.
\end{lma}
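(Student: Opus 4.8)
The plan is to start from the section $\Phi(\Sigma)=\Gamma_{\varphi^*\eta}\subset T^*([T_-,T_+']\times S^1)$ provided by Proposition \ref{prp:section}, and to kill the $p_t$-component of the one-form $\varphi^*\eta$ by a $C^2$-small isotopy through totally real surfaces, at the cost of first lengthening the positive cylindrical end. Write $\varphi^*\eta=a(t,\theta)\,dt+b(t,\theta)\,d\theta$ in the coordinates on $[T_-,T_+']\times S^1$; we want to deform this one-form to one with $a\equiv 0$. Since $\Gamma$ coincides with the zero section near the boundary, both $a$ and $b$ already vanish near $t=T_-$ and near $t=T_+'$. The key observation is that the condition $p_t=0$ is \emph{not} a closed/Lagrangian condition — we only need the deformed surface to stay totally real, which by Proposition \ref{prp:stdform} (or the discussion of $\epsilon$-Lagrangian embeddings) is an open condition — so we have a great deal of room, and the obstruction is purely a $C^0$-size issue rather than a cohomological one.

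The main step is the interpolation. First I would extend $\Sigma$ by the trivial cylinder $[T_+,T_+']\times K_+$; in the standard neighbourhood this just extends $\Gamma$ by the zero section, so $a=b=0$ on $[T_+,T_+']\times S^1$. Now I reparametrise: I want to ``spread out'' the graph in the $t$-direction so that the slope $\partial_t$-derivative of the graphing function becomes $C^1$-small. Concretely, choose a diffeomorphism $\psi\colon[T_-,T_+']\to[T_-,T_+']$, fixing endpoints and equal to the identity near them, which is very slowly varying on the region where $a$ is supported — equivalently, precompose with a long reparametrisation of $[T_-,T_+']$ after having made $T_+'-T_+$ large. Pulling back $\varphi^*\eta$ under $(\psi,\mathrm{id})$ multiplies the $dt$-component by $\psi'$, which we can make uniformly as small as we like; hence after this reparametrisation we may assume $\|a\|_{C^1}$ and $\|b\|_{C^1}$ are both small. (This is where lengthening the cylinder is genuinely used: the reparametrisation needs somewhere to absorb the stretching.) Then linearly scale: replace $\varphi^*\eta$ by $s\cdot a\,dt+b\,d\theta$ and run $s$ from $1$ down to $0$. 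At every stage the graph stays $C^1$-close to the zero section, hence totally real for $J$ (using that being totally real is a $C^1$-open condition and that the zero section, being Lagrangian, is totally real); and at $s=0$ we have $p_t=0$. Transporting this family of sections back through $\Phi^{-1}$ gives the desired $C^2$-small isotopy of $\Sigma$ through totally real surfaces, supported in $(T_+,T_+')\times Y$ since that is where the reparametrisation and scaling act nontrivially while leaving the two ends fixed.

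The step I expect to be the main obstacle is organising the bookkeeping so that the perturbation is genuinely \emph{supported} in $(T_+,T_+')\times Y$ and agrees with $\Sigma$ (hence with the cylindrical/Legendrian structure) on the old part $[T_-,T_+]\times Y$ — the reparametrisation trick as described moves things near $t=T_+$ a little, so one has to cut it off carefully, e.g.\ by only reparametrising inside $(T_+,T_+')$ and only scaling the part of $a$ transported into that window, which forces one to first isotope $\Sigma$ (through totally real surfaces, relative to a neighbourhood of the old boundary) so that its graphing one-form is already supported in $(T_+,T_+')\times S^1$; that preliminary normalisation uses the same closed-approximation idea already invoked in the proof of the previous lemma. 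A secondary point to check is that all the intermediate surfaces remain genuine embeddings, not just immersions, which is immediate from $C^1$-closeness to the zero section together with the fact that $\Phi$ is an embedding of a fixed neighbourhood $U$, provided the $C^1$-small family stays inside $U$ — another reason to make the reparametrisation, and hence the family, as $C^1$-small as needed.
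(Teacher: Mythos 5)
There is a genuine gap. You are trying to kill the $p_t$-component by deforming $\Sigma$ itself (a base reparametrisation followed by a linear scaling $a\mapsto sa$), but that mechanism cannot satisfy the constraints of the lemma, and the individual steps do not work as stated. The $p_t$-component of $\Gamma=\Phi(\Sigma)$ is in general nonzero throughout $[T_-,T_+]$, which is precisely the region where the perturbation of $\Sigma$ must be trivial; no $C^2$-small perturbation supported in $(T_+,T_+')\times Y$ can make this component vanish, since it does not touch the section over $[T_-,T_+]$ at all. Your proposed escape --- first isotope $\Sigma$ so that its graphing one-form is supported in $(T_+,T_+')\times S^1$ --- would force $\Sigma$ to be a trivial cylinder over $[T_-,T_+]$, which is not the situation being treated. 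Separately, the reparametrisation $(\psi,\mathrm{id})$ multiplies the $dt$-component by $\psi'$ but turns the $d\theta$-component into $b\circ\psi$, with the same $C^0$-norm as $b$; so the claim that both $\|a\|_{C^1}$ and $\|b\|_{C^1}$ become small, and hence that the intermediate sections are $C^1$-close to the zero section and automatically totally real, is not justified.

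The missing idea is that the lemma does not ask you to move $\Sigma$ into $\{p_t=0\}$ while keeping $\Phi$ fixed; it asserts the existence of a symplectic embedding $\Phi$ with the stated properties, so you are free to change $\Phi$ rather than $\Sigma$. The paper post-composes $\Phi$ with the fibre-wise addition of $-dh$, where $h(t,\theta)=\int_{T_-}^{t}p_t(\Gamma(s,\theta))\,ds$. Since $dh$ has $dt$-component exactly $p_t(\Gamma)$, this symplectomorphism of $T^*([T_-,T_+']\times S^1)$ carries $\Gamma$ into $\{p_t=0\}$ without moving $\Sigma$ at all. The only role of the $C^2$-small perturbation of $\Sigma$ in the extended end --- adding the section $p_t=-\chi(t)h(T_+,\theta)$ for a small bump $\chi\ge 0$ supported in $[T_+,T_+']$ with $\int\chi\,dt=1$ --- is to arrange $h(T_+',\theta)=0$, so that $dh$ vanishes near both boundary components, the fibre-wise translation is the identity there, and the composed embedding still satisfies the conclusions of Proposition~\ref{prp:section}. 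Taking $T_+'-T_+$ large is exactly what lets $\chi$, and hence the perturbation of $\Sigma$, be chosen $C^2$-small.
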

\begin{proof}
We use the coordinates supplied by Proposition \ref{prp:section} in which $\Sigma$ is identified with a section $\Gamma \subset T^*([T_-,T_+]\times S^1)$, and under which a neighbourhood of $\Sigma$ is identified with a neighbourhood of this section. Moreover, the section coincides with the zero-section near $\{t=T_\pm\}$ by construction.

Consider the function
$$h(t,\theta)=\int_{-\infty}^t p_t(\Gamma(s,\theta))ds.$$
Note that $h(t,\theta)$ vanishes near $t=T_-$ and is constant in $t$ near $t=T_+$. After enlarging the cotangent bundle by adding $T^*([T_+,T'_+]\times S^1)$ we can add a small section of the form $p_{t}=-\chi(t)h(T_+,\theta)$ where $\chi(t) \ge 0$ is supported in $[T_+,T'_+]$ and satisfies $\int_\R \chi(t)dt=1$. After taking $T'_+ \gg 0$ we may further assume that $\chi(t)$ is arbitrarily small. This small section corresponds to an extension of $\Sigma \subset [T_-,T_+]\times Y$ to a concordance inside $[T_-,T'_+]\times Y$ that is a small perturbation of the cylinder over $K_+$ inside the subset $[T_+,T'_+] \times Y$.

The function $h(t,\theta)$ defined as above for the new section defined for all $t \in [T_-,T'_+]$ satisfies the property that $h(t,\theta)=0$ on both ends $
\{t \le T_-
\}$ and $\{t \ge T'_+\}$. The fibre-wise addition of $-dh$ is thus a compactly supported symplectomorphism that takes $\Gamma$ to a section contained inside the subset $\{p_t=0\}$ as sought.
\end{proof}

Any smooth isotopy of Legendrian embeddings $\varphi_t \colon \Lambda \hookrightarrow J^1S^1$ induces an immersed Lagrangian cobordism $\Psi(\mathcal{L}_{\{\varphi_t\}})$ in the symplectisation $\R_t \times J^1S^1$ in the following manner. Consider the Lagrangian projection $\mathcal{L}_{\{\varphi_t\}} \subset T^*(\R \times S^1)$ that corresponds to the Legendrian in $J^1(\R \times S^1)=T^*(\R \times S^1) \times \R_Z$ with front projection 
$$ \R \times \Lambda \ni (t,x) \mapsto (t,\theta(\varphi_t(x)),e^t z(\varphi_t(x))) \subset \R_t \times S^1_\theta \times \R_Z.$$
We then apply the above symplectomorphism $\Psi$ to $\mathcal{L}_{\{\varphi_t\}}$ in order to produce the immersed Lagrangian concordance
$$ \Psi\left(\mathcal{L}_{\{\varphi_t\}}\right) \subset \R \times J^1S^1$$
inside the symplectisation. The property of the cobordism to be cylindrical, i.e.~tangent to $\partial_t$, is simply the condition that $\varphi_t(x)$ is independent of $t$.

We proceed to formulate the criterion for embeddedness and size of the cobordism.

\begin{lma}
\label{lma:lagcob}
The above cobordism $\mathcal{L}_{\{\varphi_t\}}$ is embedded if all families of Reeb chords $c_t$ of $\varphi_t(\Lambda) \subset J^1S^1$ have non-decreasing lengths $\ell(c_t)$, i.e.~when $\frac{d}{dt}\ell(c_t)\ge 0$. Moreover, each intersection
$$\gamma_s=\Psi\left(\mathcal{L}_{\{\varphi_t\}}\right) \cap \{t=s\} = \{s\} \times J^1S^1$$
has projection to $T^*S^1$ that coincides with the Lagrangian projection of $\varphi_t(\Lambda)$, while we have a bound
$$|p_t| \le \sup_{t,x} \left|e^{-t}\frac{d}{dt}e^tz(\varphi_t(x))\right|=\sup_{t,x}\left|z(\varphi_t(x))+\frac{d}{dt}z(\varphi_t(x))\right|$$
of its $p_t$-coordinate.
\end{lma}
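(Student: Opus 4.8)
The plan is to write $\mathcal{L}_{\{\varphi_t\}}$ in explicit coordinates; then the second and third assertions are immediate, and the first follows from a computation identifying the $p_t$-coordinate of the Lagrangian projection with an $e^t$-weighted version of the Reeb-chord lengths of the time-$t$ slices.

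\emph{Coordinates.} Parametrise $\Lambda$ by $x$ and write the coordinates of $\varphi_t(x)\in J^1S^1$ as $\bigl(\theta(x,t),p(x,t),z(x,t)\bigr)$, so that the Legendrian condition for $\varphi_t$ reads $\partial_x z=p\,\partial_x\theta$. The Legendrian $\hat L\subset J^1(\R\times S^1)$ with the stated front has front value $Z(x,t)=e^tz(x,t)$ over the base point $\bigl(t,\theta(x,t)\bigr)$, and comparing $dZ$ along $\hat L$ with $p_t\,dt+p_\theta\,d\theta$ (using $\partial_x z=p\,\partial_x\theta$ to solve for $p_\theta$, the identity persisting across $\partial_x\theta=0$ by continuity) gives
$$p_\theta=e^tp,\qquad p_t=\partial_tZ-p_\theta\,\partial_t\theta=e^t\bigl(z+\partial_t z-p\,\partial_t\theta\bigr).$$
Since each $\varphi_t$ is an embedding, $\hat L$ is an embedded Legendrian and its Lagrangian projection $\mathcal{L}_{\{\varphi_t\}}$ is an immersed Lagrangian, so that $\mathcal{L}_{\{\varphi_t\}}$ is embedded if and only if it is injective. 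As $\Psi(t,\theta,p_t,p_\theta)=(t,\theta,e^{-t}p_\theta,e^{-t}p_t)$, the slice $\gamma_s$ has $J^1S^1$-coordinates $\bigl(\theta(x,s),p(x,s),e^{-s}p_t(x,s)\bigr)$: its projection to $T^*S^1$ is exactly the Lagrangian projection of $\varphi_s(\Lambda)$ (the second assertion), while $e^{-s}p_t(x,s)=z+\partial_t z-p\,\partial_t\theta$ equals $e^{-t}\frac{d}{dt}\bigl(e^tz(\varphi_t(x))\bigr)$ once the $t$-derivative of $z$ is read as the rate of change along the point of the front lying over a \emph{fixed} base coordinate $\theta$ (tracking that point converts $\partial_t z$ at fixed $x$ into $\partial_t z-p\,\partial_t\theta$, by the Legendrian relation). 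Taking the supremum over $t$ and $x$ yields the third assertion.

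\emph{Embeddedness.} As the base coordinate $t$ separates the ends, a failure of injectivity of $\mathcal{L}_{\{\varphi_t\}}$ is a pair $(t,x_1)\neq(t,x_2)$ with equal $\theta$, equal $p_\theta$ (hence equal $p$) and equal $p_t$. The first two conditions say that $x_1,x_2$ are the endpoints of a Reeb chord $c_t$ of $\varphi_t(\Lambda)$, of length $\ell(c_t)>0$ because $\varphi_t$ is an embedding. The key identity is that along any smooth family of such chords, with endpoint paths $\xi_i(t)$,
$$p_t(\xi_1(t),t)-p_t(\xi_2(t),t)=\frac{d}{dt}\Bigl(e^t\bigl(z(\varphi_t(\xi_1(t)))-z(\varphi_t(\xi_2(t)))\bigr)\Bigr),$$
so that $\bigl|p_t(\xi_1,t)-p_t(\xi_2,t)\bigr|=\bigl|\tfrac{d}{dt}(e^t\ell(c_t))\bigr|$. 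To prove it, differentiate $D(t)=z(\varphi_t(\xi_1(t)))-z(\varphi_t(\xi_2(t)))$: using $\partial_x z=p\,\partial_x\theta$ together with the fact that $\theta$ and $p$ agree at $\xi_1(t)$ and at $\xi_2(t)$ for all $t$, the $\dot\xi_i$-terms cancel, leaving $\dot D=(\partial_t z-p\,\partial_t\theta)(\xi_1,t)-(\partial_t z-p\,\partial_t\theta)(\xi_2,t)=e^{-t}\bigl(p_t(\xi_1,t)-p_t(\xi_2,t)\bigr)-D(t)$, which rearranges to the identity. Now if $\tfrac{d}{dt}\ell(c_t)\ge0$ then $\tfrac{d}{dt}(e^t\ell(c_t))=e^t\bigl(\ell(c_t)+\tfrac{d}{dt}\ell(c_t)\bigr)\ge e^t\ell(c_t)>0$, so the two $p_t$-values never coincide along a smooth family of chords; since $p_t$ and $e^t\ell(c_t)>0$ vary continuously, they do not coincide at the isolated chord birth/death times either. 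Hence $\mathcal{L}_{\{\varphi_t\}}$ is injective, i.e.\ embedded.

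\emph{The main obstacle.} Everything rests on the displayed formula for $p_t(\xi_1,t)-p_t(\xi_2,t)$, and the delicate step is to differentiate the chord length while its endpoints $\xi_i(t)$ themselves move and to check that the reparametrisation contributions $\partial_x z\cdot\dot\xi_i$ cancel — this is exactly where one must combine the Legendrian relation $\partial_x z=p\,\partial_x\theta$ with the coincidence of $(\theta,p)$ at the two chord endpoints. Reading off $p_\theta,p_t$ from the front and tracing the effect of $\Psi$ on the slices are then routine.
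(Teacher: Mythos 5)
Your proof is correct and follows the same overall approach as the paper (lift to the Legendrian $\hat L\subset J^1(\R\times S^1)$, read off $p_\theta,p_t$ from the front, identify double points of the Lagrangian projection with Reeb chords of the slices), but you fill in two steps that the paper leaves implicit. First, you compute $p_t=e^t(z+\partial_tz-p\,\partial_t\theta)$ in full and explain why the paper's shorthand $p_t=\partial_t\bigl(e^tz(\varphi_t(x))\bigr)$ is to be read with the $t$-derivative taken at fixed base coordinate $\theta$ (equivalently, the $-p\,\partial_t\theta$ term vanishes in the paper's construction since the isotopy is fibre-preserving). Second, and more substantially, the paper asserts that the difference of $p_t$-values at the chord endpoints equals $\tfrac{d}{dt}\bigl(e^t\ell(c_t)\bigr)$ without proof; you supply the needed computation, differentiating the chord length along the moving endpoints $\xi_i(t)$ and using the Legendrian relation $\partial_xz=p\,\partial_x\theta$ together with the coincidence of $(\theta,p)$ at the two endpoints to show the $\dot\xi_i$-terms cancel — this is exactly the non-trivial point that makes the embeddedness criterion work.
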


\begin{proof}
Consider the image of a slice $\mathcal{L}_{\{\varphi_t\}} \cap \{t=s\}$ under the canonical projection $T^*(\R \times S^1) \to T^*S^1$. The image is clearly equal to the Lagrangian projection of the Legendrian $\varphi_{s}(\Lambda) \subset J^1S^1 \to T^*S^1$.

To recover the entire slice $\mathcal{L}_{\{\varphi_t\}} \cap \{t=s\}$ from the projection to $T^*S^1$, we just need the missing coordinate $p_t$, which can be expressed as
$$p_t=\partial_t(e^tz(\varphi_t(x))).$$
The claimed bound on the $p_t$-coordinate easily follows from this identity.

To see the embeddedness property of $\mathcal{L}_{\{\varphi_t\}}$ we note that any double-point of the projection of the slice $\mathcal{L}_{\{\varphi_t\}} \cap \{t=s\}$ corresponds to a Reeb chord $c_s$ on $\varphi_s(\Lambda)$. Using $c_s^+$ and $c_s^-$ to denote the endpoint and starting point of the Reeb chord, respectively, we deduce that the corresponding points in $\mathcal{L}_{\{\varphi_t\}} \cap \{t=s\}$ have different values of the $p_t$-coordinate whenever
$$ \frac{d}{dt}\left(z\left(\varphi_t\left(c_t^+\right)\right)-z\left(\varphi_t\left(c_t^-\right)\right)\right)=\frac{d}{dt} \ell(c_t) \ge 0.$$
It follows that the entire Lagrangian concordance is embedded when the Reeb chords lengths on $\varphi_t(\Lambda)$ are non-decreasing.
\end{proof}

\subsection{Constructing the Lagrangian concordance}
\label{sec:constructing}

We now assume that the concordance satisfies the properties above, so that Proposition \ref{prp:section} and Lemma \ref{lma:pq=0} can be applied to produce a section $\Gamma \subset T^*([T_-,T_+] \times S^1)$ that coincides with the zero-section near the boundary, is contained inside $\{p_t=0\}$, and which is the image of the concordance $\Sigma \subset [T_-,T_+] \times Y$ under a symplectomorphism from a neighbourhood of the latter.

Consider the image
$$\Psi(\Gamma) \subset (\R_t \times J^1S^1,d(e^t\alpha))$$
inside the symplectisation, where we recall that we use the coordinates
$$(J^1S^1,\alpha)=(\R_z \times T^*S^1,\alpha)=(\R_z \times \R_p \times S^1_\theta,dz-p\,d\theta).$$
Further, consider the Lagrangian projections of the slices
$$ \gamma_s \coloneqq \pi_{T^*S^1}(\Psi(\Gamma) \cap \{t=s\}) \subset T^*S^1, \:\: s \in [T_-,T_+],$$
which, since $\Gamma$ is a section, is a smooth family of graphical curves (i.e.~sections) in $T^*S^1$ that coincide with the zero-section for all $s$ close to $s=T_\pm$.

Note that the pull-backs $\gamma_s^*(p\,d\theta)$ are not always exact, which means that the curves $\gamma_s$ do not admit Legendrian lifts in general. The goal is to deform the path of graphical embeddings $\gamma_s$ to a path of exact immersions $\eta_s$ that have a smooth family of Legendrian lifts $\varphi_s \colon S^1 \to J^1S^1$ such that $\pi_{T^*S^1} \circ \varphi_s =\eta_s$. Moreover, we want the Lagrangian trace cobordism
$$\Psi\left(\mathcal{L}_{\{\varphi_t\}}\right) \subset (\R \times J^1S^1,d(e^t\alpha))$$
produced by Lemma \ref{lma:lagcob} to be embedded, arbitrarily close to $\Psi(\Gamma)$ in $C^0$-norm and, moreover, smoothly isotopic to the latter in some small neighbourhood. In particular, this means that the immersion $\eta_s$ must be a $C^0$-approximation of $\gamma_s$.

\emph{Step (I): Adding loops corresponding to zig-zags at $t=T_-$.} Deform the embedding $\gamma_{T_-,0} \coloneqq \gamma_{T_-}=0_{S^1}$ to an immersion $\gamma_{T_-,1}$ by adding $N \gg 0$ pairs of small multiply covered loops that are adjacent to each other, and rotate $W$ turns in different directions. This addition of loops amounts to adding $W\cdot N$ positive and the same number of negative stabilisations to the Legendrian lift $\tilde{\gamma}_{T_-,0}$. We depict such a pair of multiply covered loops and the corresponding stabilised Legendrian lift in Figure \ref{fig:coil}. More precisely, we add such pairs of loops evenly spaced along the zero-section $\gamma_{T_-,0}$, putting one loop at each point $\theta=i 2\pi/{2N}$ with $i=0,1,\ldots 2N-1$. In addition, we make sure that each of the two loops in every pair winds around a disc of the same area $a>0$, and that the pull-back of $p\,d\theta$ to the immersion still is exact (i.e.~the total signed area bounded by the immersion and the zero section vanishes). This is equivalent to the existence of a Legendrian lift $\tilde{\gamma}_{T_-,1}$.

Consequently, the Legendrian lift $\tilde{\gamma}_{T_-,1} \subset J^1S^1$ of $\gamma_{T_-,1}$ is embedded and equal to a deformation of the zero-section $j^10$ by the addition of $ W\cdot N$ positive and $ W\cdot N$ negative stabilisations. Also, note that there is a smooth isotopy from the original Legendrian zero-section $\tilde{\gamma}_{T_-,0}$ to $\tilde{\gamma}_{T_-,1}$. The reason is that stabilisations correspond to Reidemeister-I moves in the Lagrangian projection; see e.g.~Figure \ref{fig:winding}.

At this point in the construction, it is important to note that for any choices of $N, W \ge 0$, we can always find a sufficiently small $a=c_0>0$ in order for the produced Legendrian lift to be arbitrarily $C^0$-close to the original Legendrian lift (i.e.~the zero-section $j^10 \subset J^1S^1$).

\emph{Step (II): Extending the loops to the entire isotopy.} There is an area-preserving isotopy $\psi_s \in\OP{Symp}(T^*S^1,d(p\, d\theta))$, where $T^*S^1=S^1_\theta \times \R_{p}$, that fixes the foliation $\{\theta=\OP{const}\}$ by cotangent fibres and for which $\psi_s(\gamma_{T_-})=\gamma_s$ for $s \in[T_-,T_+]$. In other words, the isotopy preserves the fibres of the bundle $T^*S^1 \to S^1$ set-wise. Using the same isotopy $\psi_t$ we extend the immersion $\gamma_{T_-,1}$ to a smooth family of immersions $\gamma_{s,1}=\psi_s\left(\gamma_{T_-,1}\right)$. Since $\psi_s$ is not necessarily Hamiltonian, the family $\gamma_{s,1}$ is again not exact in general, and hence these curves do not necessarily admit Legendrian lifts.

The parameter $a>0$ in the above construction can be taken to be a smooth parameter in some interval $[c_0,c_1]$ where $0<c_0<c_1$ are small. We will use the notation
$$ \gamma_{s,1}^{\{a\}^{2N}},\:\: a\in [c_0,c_1], $$
where $\{a\}^{2N}=(a,\ldots,a)$ is just an ordered $2N$-tuple with all entries consisting of the single parameter $a$. The reason for this notation is that in the next step we will create a deformation of this family of curves that depends on $2N$ parameters, where at this initial moment all parameters are set to the same value $a$.

\emph{Step (III): Controlling the size of the individual loops.} We then modify the above construction by allowing the area of each of the $2N$ ($W$-fold covered) loops to be controlled individually, instead of all being equal to the same value $a$. In practice, we perform this construction by a fibre-wise interpolation between the curves in the family $\gamma^{\{a\}^{2N}}_{s,1}$ for different values of $a$, supported in regions of the form $\{\theta \in [\theta_i,\theta_{i+1}]\} \subset T^*S^1$. We will need to give this interpolation some special attention in Step (V) below.

The area bounded by the $i$:th loop will be denoted by $a_i \in [c_0,c_1]$, $i=1,\ldots,2N$, where $0<c_0<c_1$ are fixed and small. Denote the resulting family of curves by $\gamma_{s,1}^\mathbf{a}$, which has a smooth dependence on $\mathbf{a} \in [c_0,c_1]^{2N}$. Furthermore, we can construct a family of isotopies $\psi^{\mathbf{a}}_s$, $s\in[T_-,T_+]$, with smooth dependence on $\mathbf{a} \in [c_0,c_1]^{2N}$, that preserves the fibres of $T^*S^1 \to S^1$ set-wise, and for which
$$ \gamma_{s,1}^{\mathbf{a}}=\psi^{\mathbf{a}}_s\left(\gamma_{T_-,1}^{\{c_0\}^{2N}}\right), \:\: \mathbf{a} \in [c_{0},c_{1}]^{2N}.$$
It is important to note that the family produced satisfies the property that the smooth projection $\pi_{S^1} \circ \gamma_{s,1}^{\mathbf{a}}$ of the immersion to the base $S^1$ does not depend on the parameter $\mathbf{a}$.

\emph{Step (IV): Expanding the loops to make the regular homotopy exact with small primitives.}

For any fixed value of the above parameter $\mathbf{a} \in [c_0,c_1]^{2N}$, the integrals
$$z_0(\tau,s) \coloneqq \int_{[0,\tau]} \left(\gamma_{s,1}^{\mathbf{a}}\right)^*(p \,d\theta),$$
where $\tau \in \R$ is a lift of the angular coordinate on $S^1$, are generally neither small nor $2\pi$-periodic in $\tau$ (i.e.~no Legendrian lift exists). The failure of the periodicity is due to the non-exactness of the immersed curves. However, note that the difference
$$z_0(\tau,s)-z_0(\tau+2\pi,s) = c(s) \in \R $$
is a smooth function depending only on $s$ which, moreover, vanishes near $s=T_\pm$.

The goal is now to find a suitable path $\mathbf{a}(s)$, $s \in [T_-,T_+]$, which satisfies the following properties:
\begin{enumerate}[label=(P.\arabic*), ref=(P.\arabic*)]
\item \label{P.1} The corresponding family $\gamma^{\mathbf{a}(s)}_{s,1} \subset (T^*S^1,d(p\,d\theta))$ of immersions should all be \emph{exact}, so that they admit a family $\tilde{\gamma}^{\mathbf{a}(s)}_{s,1}$ of Legendrian lifts that constitute a Legendrian isotopy.
\item \label{P.2} The $z$-coordinates $z(\tau,s)$ of these Legendrian lifts as well as their derivatives $\partial_s z(\tau,s)$ should all stay sufficiently small in the uniform norm, so that the Lagrangian trace cobordism produced by Lemma \ref{lma:lagcob} from this Legendrian isotopy will have $p_t$-coordinate close to the original concordance $\Psi(\Gamma)$; recall that $\Gamma$ has vanishing $p_t$--coordinate and, hence, $\Psi(\Gamma) \subset \R \times J^1S^1$ has vanishing $z$-coordinate.
\item \label{P.3} We need all components $0<c_0 \le a_i(s)<c_1$ of $\mathbf{a}(s)$ to stay sufficiently small for all $s$. This is in order for the obtained regular homotopy $\gamma^{\mathbf{a}(s)}_{s,1}$ to stay $C^0$-close to the original isotopy $\gamma_s$.
\item \label{P.4} All Reeb chords on the Legendrians isotopy $\tilde{\gamma}^{\mathbf{a}(s)}_{s,1}$ must have lengths that are \emph{non-decreasing} in the parameter $s$. This is in order to ensure that Lemma \ref{lma:lagcob} produces an \emph{embedded} Lagrangian trace cobordism. In particular, this means that $a_i'(s) \ge 0$ necessarily holds for all components of $\mathbf{a}(s)$.
\end{enumerate}
To summarise the above properties: for any $\epsilon,c_1>0$ sufficiently small, we want a family $\mathbf{a}(s) \in (0,c_1]$ with $a_i'(s) \ge 0$, for which the primitives of the pull-backs of $p\,d\theta$ satisfy
\begin{equation}
\label{eq:z}
z(\tau,s)=\int_{[0,\tau]} \left(\gamma_{s,1}^{\mathbf{a}(s)}\right)^*(p \,d\theta) \in [-\epsilon,\epsilon], \:\:z(\tau,s)=z(\tau+2\pi,s), \:\: |z(\tau,s)| +|\partial_s z(\tau,s)| \le 3\epsilon, 
\end{equation}
where $\tau \in \R$ is a real lift of the angular coordinate on $S^1_\theta$.

We now explain how this can be achieved for suitable parameters $N$ and $W$. The main mechanism of the addition of the loops is the following effect on the pull-back of $p\,d\theta$ to the curve: If the $i$:th winding loop is positive (resp. negative), then a primitive of this pull-back must decrease (resp. increase) by an amount $W\cdot a_i(s)>0$ as the loop is traversed in the direction of the orientation . Recall that this (locally defined) primitive is the $z$-coordinate of a (locally defined) Legendrian lift.

Property \ref{P.1} is easy to achieve while still ensuring Properties \ref{P.3} and \ref{P.4}. For any small $c_1>0$, we simply need to take both $N,W \gg 0$ sufficiently large. Namely, when
$\int_{S^1} (\gamma^{\mathbf{a}}_{s,1})^*p\,d\theta=c(s)$ satisfies $c(s)>0$ (resp. $c(s)<0$) we simply need to increase the areas $a_{2i}>0$ of the positively winding loops (resp. $a_{2i+1}>0$ of the negatively winding loops) by the amount $|c(s)|/(NW)>0$ in order to make the immersion exact.

The first part of Property \ref{P.2} (uniform smallness of the $z$-coordinate) can be achieved by adjusting the parameters $a_i(s)>0$ so that the corresponding loops cancel the contribution $z_0(\tau,s)$ of the integral coming from the original path $\gamma^{\mathbf{a}}_{s,1}$. Here we need $N,W \gg 0$ sufficiently large, so that this can be done while satisfying Property \ref{P.3}; i.e.~the parameters $a_i(s)>0$ should stay sufficiently small. Why the above can be achieved under the requirement $a_i'(s) \ge 0$, which is necessary if we want to have a chance to ensure Property \ref{P.4}, is explained in the following remark.

\begin{rmk}
\label{rmk:coil}
The problem of constructing the parameters $\mathbf{a}(s)$ with $a_i'(s) \ge 0$ that satisfy the above properties is equivalent to solving the following conceptually simpler problem. Any smooth family $Z_s \colon S^1 \to \R$ of smooth functions has a family of graphs $\Gamma_{Z_s} \times \R$ which can be $C^0$-approximated by the topological boundary $C_s$ of the subgraph of a smooth family of step functions $F_s \colon S^1 \to \R$ with $2N$ steps at the points $2\pi i/(2N) \in S^1$ if we take $N \gg 0$ sufficiently large. (In other words, $C_s$ consists of the graph of the step function $F_s$ with endpoints of the steps connected by vertical segments.) 
 If one would like a geometric description that is even closer to the original problem that we are considering here, one can instead think about $C_s$ as $2N$ horizontally aligned rods joined by thin and vertically aligned ``spring coils'' at their endpoints as shown in Figure \ref{fig:coil}, where the spring coils correspond to the loops in the Lagrangian projection. Finding the smooth family of step functions $F_s$ where no vertical jump is allowed to \emph{shrink} in size -- i.e.~the spring coils can only expand -- is possible if one proceeds as follows. First, deform $F_s$ in order to ensure that all vertical segments remain arbitrarily small, bounded by $\delta>0$. This can be done by choosing $N \gg 0$ even larger and approximating a big step by a staircase consisting of sufficiently many small and short steps. (The choice of $N$ needs to grow with the size of $\delta^{-1}$.) At this point, all steps in $F_s$ are small, but some step might still shrink in length as $s$ evolves. Instead of shrinking the small step, we can leave it fixed in size, and instead introduce an extra staircase that go either upwards or downwards between the previous steps (in practice, this means increasing $N \gg 0$).
\end{rmk}
The choices of parameters $N$, $W$, and $c_1$ (where the latter gives a bound $0<a_i(s)<c_1$) depend on each other in the following manner.
\begin{itemize}
\item The ``maximum step size'' $\delta \coloneqq W \cdot c_1>0$ is chosen to be arbitrarily small, depending on how well we want to control the size of the $z$-coordinate of $\tilde{\gamma}^{\mathbf{a}(s)}_{s,1}$. The reason why we need $\delta>0$ to be small is because, in view of Property \ref{P.4}, any decrement (resp. increment) of the $z$-coordinate introduced by the increased size of a positively (resp. negatively) oriented loop will remain once it has been introduced. (The loops are not allowed shrink, so the only possibility to cancel the decrement (resp. increment) of the $z$-coordinate is to increase the size of some other negatively (resp. positively) winding loop.) Cf. ~Remark \ref{rmk:coil}.
\item We need to choose $N \gg 0$ sufficiently large depending on the magnitudes of $\delta^{-1}=\frac{1}{W\cdot c_1}$, $\max_{\tau,s}|\partial_\tau z_0(\tau,s)|$, and $\max_{\tau \in [0,2\pi],s}|z_0(\tau,s)|$. (Roughly speaking, if each loop only can give a contribution of at most $\delta$ to a change of value of the $z$-coordinate, then we need to use more loops.)
\item Increasing $N \gg 0$ means that the bound $c_1>a_i(s)>0$ on the parameters must be shrunk even further -- the reason is that we must have room to introduce more loops along the curves, which means that they have to wind around less area. Since $\delta=W \cdot c_1$ has been fixed, the parameter $W>0$ thus also depends on $N$. (Roughly speaking, if the loops are supposed to satisfy a smaller bound $a_i(s)<c_1$ on their area, then we might need to increase $W \gg 0$ if we want the small change of the size of loops to still have sufficient effect on the change of the $z$-coordinate.)
\end{itemize}
We thus conclude that the sought family $\mathbf{a}(s)$ with $c_1>a_i(s)>0$ and $a_i'(s) \ge 0$ can be constructed, for which \eqref{eq:z} is satisfied except for the property $|\partial_{ \tau} z(\tau,s)| \le 2\epsilon$. The latter bound will be taken care of in Step (V) below.

Further, since $a'_i(s) \ge 0$ is satisfied, we can readily ensure that the induced fibre-preserving isotopy $\psi_s^{\mathbf{a}(s)}$ of $T^*S^1 \to S^1$ which induces the family of immersions
$$\gamma_{s,1}^{\mathbf{a}(s)}=\psi_s^{\mathbf{a}(s)}\left(\gamma_{T_-,1}^{\{c_0\}^{2N}}\right)$$
is everywhere area non-decreasing. The important outcome of this is ensuring that the Reeb chords of the Legendrian lifts all will be non-decreasing in the parameter $s$. This will be important in the next step, where we make sure that Property \ref{P.4} is satisfied. 

\emph{Step (V): Ensuring that the trace cobordism $\mathcal{L}$ is an embedded $C^0$-approximation of $\Psi(\Sigma)$.} We can achieve the bound
\begin{equation}
|\partial_s z(\tau,s)| \le 2\epsilon,
\end{equation}
i.e.~the second part of Property \ref{P.2}, after a further deformation of the family $\gamma_{s,1}^{\mathbf{a}(s)}$. Namely, since $\pi_{S^1}\circ \gamma_{s,1}^{\mathbf{a}(s)}$ is independent of the parameter $\mathbf{a}(s)$, we can do a linear interpolation of the $p$-coordinates for the discrete set of curves
$$\gamma_{j/M,1}^{\mathbf{a}(j/M)},\:\:\text{for}\:\: j \in \{0,\ldots,M\},$$
with $M \gg 0$, after which we get a new family of immersions for which the corresponding function $z(\tau,s)$ for $s \in [j/M,(j+1)/M]$ becomes the convex interpolation
$$ (\tau,s) \mapsto z(\tau,s)=\frac{(j+1)/M-s}{1/M}z(\tau,j/M) +\frac{(s-j/M)}{1/M}z(\tau,(j+1)/M)$$ 
for all $\tau$. The smoothing of these convex interpolations gives rise to a Legendrian isotopy $\phi_s \colon S^1 \to J^1S^1$.

Furthermore, since all $a_i'(s) \ge 0$ are non-negative, the Reeb chords on this Legendrian isotopy can all be assumed to have non-decreasing lengths; indeed, any Reeb chord length is given by an area enclosed by the Lagrangian projection in $T^*S^1$, which is non-decreasing in $s$ by construction since the isotopy $\psi_s^{\mathbf{a}(s)}$ is locally area non-decreasing. In other words, Property \ref{P.4} is satisfied. By Lemma \ref{lma:lagcob}, a Legendrian isotopy whose Reeb chords have non-decreasing lengths induces a Lagrangian trace cobordism that is embedded. 

\emph{Step (VI): Finishing the construction.} Since $|z(\tau,s)+\partial_sz(\tau,s)| \le 3\epsilon$ can be assumed to be arbitrarily small, the induced bound on the $p_t$-coordinate of the Lagrangian trace cobordism $\mathcal{L}_{\{\varphi_t\}} \subset T^*(\R \times S^1)$ given by Lemma \ref{lma:lagcob} implies that $\mathcal{L}_{\{\varphi_t\}}$ lives in a small neighbourhood of the original section $\Gamma$. Since the Lagrangian projections $\pi_{T^*S^1} \circ \varphi_t$ all are obtained from the original family of embedded curves $\gamma_s$ by a fixed number $2\cdot N\cdot W$ of Reidemeister-I moves (see Figures \ref{fig:winding} and \ref{fig:coil}), one can readily construct a smooth isotopy from $\mathcal{L}_{\{\varphi_t\}}$ to $\Gamma$ inside the neighbourhood that fixes each hypersurface of the form $\{t=t_0\} \subset T^*(\R \times S^1)$ set-wise. Thus, the image $\Psi(\mathcal{L}_{\{\varphi_t\}})$ is the sought Lagrangian concordance. 
\qed

\begin{figure}[htp]
 \vspace{3mm}
 \labellist
 \pinlabel $z$ at 4 128
 \pinlabel $x$ at 113 15
 \pinlabel $x$ at 113 52
 \pinlabel $y$ at 4 39
	\endlabellist
 \includegraphics[scale=2]{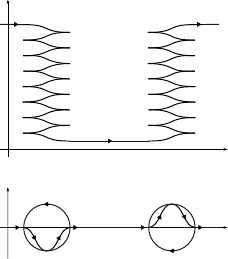}
 \caption{Two canceling coil springs. The zig-zags on the left are positive stabilisations (they increase the rotation number of the Lagrangian projection shown below) while the zig-zags on the right are negative stabilisations (they decrease the rotation number). Here each loop is $W$-fold covered with $W=7$.}
 \label{fig:coil}
\end{figure}

\bibliographystyle{alpha}
\bibliography{references}

\end{document}